\theoremstyle{plain}
\newtheorem{theorem}{Theorem}[section]
\newtheorem{corollary}[theorem]{Corollary}
\newtheorem{proposition}[theorem]{Proposition}
\newtheorem{lemma}[theorem]{Lemma}
\numberwithin{equation}{section}
\begin{document}

\title[Hankel Determinants of shifted sequences]{Hankel Determinants of shifted sequences of Bernoulli and Euler numbers}

\author{Karl Dilcher}
\address{Department of Mathematics and Statistics\\
         Dalhousie University\\
         Halifax, Nova Scotia, B3H 4R2, Canada}
\email{dilcher@mathstat.dal.ca}

\author[Lin Jiu]{Lin Jiu\textsuperscript{*}}
\thanks{*Corresponding author}
\address{Zu Chongzhi Center for Mathematics and Computational Sciences\\
Duke Kunshan University\\
Kunshan, Suzhou, Jiangsu Province, 215316, PR China}
\email{lin.jiu@dukekunshan.edu.cn}

\keywords{Bernoulli polynomial, Euler polynomial, Hankel determinant, 
orthogonal polynomial, shifted sequence}
\subjclass[2020]{Primary 11B68; Secondary 33D45, 11C20}
\thanks{Research supported in part by the Natural Sciences and Engineering
        Research Council of Canada, Grant \# 145628481}

\date{}

\setcounter{equation}{0}

\begin{abstract}
Hankel determinants of sequences related to Bernoulli and Euler numbers have
been studied before, and numerous identities are known. However, when a 
sequence is shifted by one unit, the situation often changes significantly.
In this paper we use classical orthogonal polynomials and related methods to
prove a general result concerning Hankel determinants for shifted sequences.
We then apply this result to obtain new Hankel determinant evaluations for a 
total of $13$ sequences related to Bernoulli and Euler numbers, one of which 
concerns Euler polynomials.
\end{abstract}

\maketitle

\section{Introduction}

The {\it Hankel determinant} of a sequence ${\bf c}=(c_0,c_1,\ldots)$ of 
numbers or polynomials is defined as the determinant of the {\it Hankel matrix},
or {\it persymmetric matrix}, given by
\begin{equation}\label{1.0}
\big(c_{i+j}\big)_{0\leq i,j\leq n}
=\begin{pmatrix}
c_{0} & c_{1} & c_{2} & \cdots & c_{n}\\
c_{1} & c_{2} & c_{3} & \cdots & c_{n+1}\\
c_{2} & c_{3} & c_{4} & \cdots & c_{n+2}\\
\vdots & \vdots & \vdots & \ddots & \vdots\\
c_{n} & c_{n+1} & c_{n+2} & \cdots & c_{2n}
\end{pmatrix}.
\end{equation}
Hankel determinants of various classes of sequences have been extensively
studied, partly due to their close relationship with classical orthogonal 
polynomials; see, e.g., \cite[Ch.~2]{Is}. In fact, many evaluations of Hankel 
determinants come from this connection and a related connection with continued 
fractions. For numerous results see, e.g., the very extensive treatments in 
\cite{Kr1, Kr2, Mi}, and the numerous references provided there.

In the recent paper \cite{DJ1} we used the connection with orthogonal 
polynomials
and continued fractions to find new evaluations of Hankel determinants of
certain subsequences of Bernoulli and Euler polynomials. This was followed in
\cite{DJ2} by evaluations of Hankel determinants of various other sequences
related to Bernoulli and Euler numbers and polynomials. 

We recall that the {\it Bernoulli numbers} $B_n$
and {\it polynomials} $B_n(x)$ are usually defined by the generating functions
\begin{equation}\label{1.2}
\frac{t}{e^t-1} = \sum_{n=0}^\infty B_n\frac{t^n}{n!}\qquad\hbox{and}\qquad
\frac{te^{xt}}{e^t-1} = \sum_{n=0}^\infty B_n(x)\frac{t^n}{n!}.
\end{equation}
We have $B_0=1$, $B_1=-1/2$, and $B_{2j+1}=0$ for $j\geq 1$; a few further
values are listed in Table~\ref{tab:1}. The {\it Euler numbers} $E_n$ and 
{\it polynomials} $E_n(x)$ are defined by the generating functions
\begin{equation}\label{1.3}
\frac{2}{e^t+e^{-t}} = \sum_{n=0}^\infty E_n\frac{t^n}{n!}\qquad\hbox{and}\qquad
\frac{2e^{xt}}{e^t+1} = \sum_{n=0}^\infty E_n(x)\frac{t^n}{n!}.
\end{equation}
The first few values are again given in Table~\ref{tab:1}. Comparing the generating 
functions in \eqref{1.2} and in \eqref{1.3}, respectively, we get
\begin{equation}\label{1.3a}
B_n(x) = \sum_{j=0}^n\binom{n}{j}B_j x^{n-j},\qquad
2^nE_n(x) = \sum_{j=0}^n\binom{n}{j}E_j(2x-1)^{n-j},
\end{equation}
and thus, for all $n=0, 1,\ldots$,
\begin{equation}\label{1.3b}
B_n = B_n(0),\qquad E_n = 2^nE_n\big(\tfrac{1}{2}\big). 
\end{equation}
The four sequences in \eqref{1.2} and in \eqref{1.3} are
among the most important special number and polynomial sequences in mathematics,
with numerous applications in number theory, combinatorics, numerical analysis,
and other areas. Further properties can be found, e.g., in \cite[Ch.~24]{DLMF}.

\begin{table}
\begin{center}
\renewcommand*{\arraystretch}{1.2}
\begin{tabular}{|r||r|r|l|l|r|}
\hline
$n$ & $B_n$ & $E_n$ & $B_n(x)$ & $E_n(x)$ & $E_n(1)$\\
\hline
0 & 1 & 1 &  1 & 1 & 1\\
1 & $-1/2$ & 0 & $x-\tfrac{1}{2}$ & $x-\tfrac{1}{2}$ & $1/2$ \\
2 & $1/6$ & $-1$  & $x^2-x+\tfrac{1}{6}$ & $x^2-x$ & $0$ \\
3 & 0 & 0 & $x^3-\tfrac{3}{2}x^2+\tfrac{1}{2}x$ & $x^3-\tfrac{3}{2}x^2+\tfrac{1}{4}$ & $-1/4$ \\
4 & $-1/30$ & 5 & $x^4-2x^3+x^2-\tfrac{1}{30}$ & $x^4-2x^3+x$ & $0$ \\
5 & 0 & 0 & $x^5-\tfrac{5}{2}x^4+\tfrac{5}{3}x^3-\tfrac{1}{6}x$ &
$x^5-\tfrac{5}{2}x^4+\tfrac{5}{2}x^2-\tfrac{1}{2}$ & $1/2$  \\
6 & $1/42$ & $-61$ & $x^6-3x^5+\tfrac{5}{2}x^4-\tfrac{1}{2}x^2+\tfrac{1}{42}$ &
$x^6-3x^5+5x^3-3x$ & $0$\\
\hline
\end{tabular}
\end{center}
\captionsetup{font=large}
\caption{$B_n, E_n, B_n(x)$, $E_n(x)$, and $E_n(1)$ for $0\leq n\leq 6$. \label{tab:1}}
\end{table}

The paper \cite{DJ2} also contains a list of all ``Hankel-Bernoulli/Euler''
identities known to us. It turned out that the majority of such identities,
when written in a standard way, are of a very specific form. Indeed, if we set
\begin{equation}\label{1.4}
H_n({\bf c}) = H_n(c_k) = \det_{0\leq i,j\leq n}\big(c_{i+j}\big),
\end{equation}
then for a large number of sequences ${\bf c}$ related to Bernoulli and Euler
numbers and polynomials we have
\begin{equation}\label{1.5}
H_n(c_k) = (-1)^{\varepsilon(n)}\cdot a^{n+1}\cdot\prod_{\ell=1}^{n}b(\ell)^{n+1
-\ell},
\end{equation}
where $\varepsilon(n)$ is either $0$ or a polynomial in $n$ of degree at most $2$, 
$a$ is a positive rational number, and $b(\ell)$ is a rational function 
in $\ell$ having only linear factors in the numerator and the denominator. (In
a few cases where $c_k$ is a polynomial sequence, $a$ and $b(\ell)$ also have
some linear factors in $x$). For instance, if $c_k=B_{2k+2}$, then
\begin{equation}\label{1.6}
\varepsilon(n)=0,\quad a=\frac{1}{6},\quad 
b(\ell)=\frac{\ell^3(\ell+1)(2\ell-1)(2\ell+1)^3}{(4\ell-1)(4\ell+1)^2(4\ell+3)},
\end{equation}
and when $c_k=B_{2k+4}$, we have
\begin{equation}\label{1.7}
\varepsilon(n)=n+1,\quad a=\frac{1}{30},\quad 
b(\ell)=\frac{\ell(\ell+1)^3(2\ell+1)^3(2\ell+3)}{(4\ell+1)(4\ell+3)^2(4\ell+5)}.
\end{equation}
These two expressions were adapted from the identities (3.59) and (3.60),
respectively, in \cite{Kr1}.

On the other hand, while there is such a formula for $c_k=B_{2k}(\frac{1}{2})$,
no general identity has been known for the seemingly more natural sequence
$c_k=B_{2k}$. In fact, the four smallest nontrivial Hankel determinants 
$H_n(B_{2k})$, for $n=1, 2, 3, 4$, factor as
\[
\frac{-11}{2^2\cdot3^2\cdot 5},\quad\frac{137}{2\cdot3^2\cdot5^3\cdot 7^2},\quad
\frac{-2^2\cdot 3}{5\cdot 7^4\cdot 13},\quad
\frac{2^{10}\cdot 7129}{5\cdot 7^2\cdot 11^4\cdot 13^3\cdot 17},
\]
respectively. This indicates that we cannot expect an identity such as 
\eqref{1.5}. However,
further numerical experiments revealed that for all $n\geq 1$ we might
conjecture
\begin{equation}\label{1.8}
H_n(B_{2k})=(-1)^n\frac{(4n+3)!}{(n+1)\cdot(2n+1)!^3}\cdot{\mathcal H}_{2n+1}
\cdot H_n(B_{2k+2}),
\end{equation}
where $H_n(B_{2k+2})$ is given by \eqref{1.5} and \eqref{1.6}, and 
${\mathcal H}_n$ is the $n$th harmonic number
\begin{equation}\label{1.9}
{\mathcal H}_n = \sum_{j=1}^n\frac{1}{j}.
\end{equation}

It is the purpose of this paper to prove the identity \eqref{1.8} and a number
of other similar and apparently new identities. When $(b_k)_{k\geq 0}$ is a
given sequence, the identities are all of the form
\begin{equation}\label{1.10}
H_n(b_k)=F_nH_n(b_{k+1}),
\end{equation}
where $H_n(b_{k+1})$ has a known evaluation. The sequence $F_n$ is most often
a sequence of harmonic or related numbers, but factorials and in one case a 
recurrence sequence also occur.

This paper is structured as follows. In Section~\ref{sec2} we provide some necessary
background, mainly related to orthogonal polynomials, and we prove a lemma that
will be the basis for all further results. In Section~\ref{sec3} we state and prove six
different Hankel determinant evaluations that follow more or less directly from
this lemma. Then, in Section~\ref{sec4}, we state without proof several further known 
auxiliary results, which will then be used in the remaining three sections to
prove a few more Hankel determinant evaluations. The last of these sections is
different in that it deals with sequences of polynomials.

We conclude this introduction with a summary of Hankel determinant identities 
that are proved in this paper. See Table~\ref{tab:2}, were the sequence $(b_k)$ is as in 
\eqref{1.10}, and those values of $b_0$ that are consistent with the general
$b_k$ are given in parentheses. 

\begin{table}
\begin{center}
\renewcommand*{\arraystretch}{1.2}
\begin{tabular}{|l|c|c||l|c|c|}
\hline
$b_k, k\geq 1$ & $b_0$ & Prop. & $b_k, k\geq 1$ & $b_0$ & Prop. \\
\hline
$B_{k-1}$ & 0 & 3.1 & $E_{k+3}(1)$ & $(\tfrac{-1}{4})$ & 5.2 \\
$B_{2k}$ & (1) & 6.1 & $E_{2k-1}(1)$ & 0 & 3.3 \\
$(2k+1)B_{2k}$ & (1) & 6.2 & $E_{2k+5}(1)$ & $(\tfrac{1}{2})$ & 5.1 \\
$(2^{2k}-1)B_{2k}$ & (0) & 3.4 & $E_{k}(1)/k!$ & (1) & 3.6 \\
$(2k+1)E_{2k}$ & 0 & 3.5 & $E_{2k-1}(1)/(2k-1)!$ & 0 & 6.3 \\
$E_{2k-2}$ & 0 & 7.3 & $E_{2k-2}(\tfrac{x+1}{2})$ & 0 & 7.2 \\
$E_{k-1}(1)$ & 0 & 3.2 & & & \\
\hline
\end{tabular}
\end{center}
\captionsetup{font=large}
\caption{Summary of results. \label{tab:2}}
\end{table}

\section{Orthogonal polynomials and a fundamental lemma}\label{sec2}

We begin this section with some necessary background on the connection 
between orthogonal polynomials and Hankel determinants. 
All this is well-known and can also be found in concise form in 
\cite{DJ1} and \cite{DJ2}. We repeat this material here for easy reference,
and to make this paper self-contained. The second part of this section is new,
and will be the basis for much of what follows.

\subsection{Orthogonal polynomials}

Suppose we are given a sequence ${\bf c}=(c_0, c_1, \ldots)$ of numbers; then 
we can define a linear functional $L$ on polynomials by
\begin{equation}\label{2.1} 
L(x^k) = c_k,\quad k = 0, 1, 2, \ldots.
\end{equation}
We may also normalize the sequence such that $c_0=1$. We now summarize several 
well-known facts and state them as a lemma with two corollaries; see, e.g., 
\cite[Ch.~2]{Is} and \cite[pp.~7--10]{Chi}. 

\begin{lemma}\label{lem:2.1}
Let $L$ be the linear functional in \eqref{2.1}. If (and only if) 
$H_n(c_k)\neq 0$ for all $n=0, 1, 2, \ldots$, there exists a unique sequence of
monic polynomials $P_n(y)$ of degree $n$, $n=0, 1, \ldots$, and a sequence of 
positive numbers $(\zeta_n)_{n\geq 1}$, with $\zeta_0=1$, such that 
\begin{equation}\label{2.2}
L\left(P_m(y)P_n(y)\right) = \zeta_n\delta_{m,n},
\end{equation}
where $\delta_{m,n}$ is the Kronecker delta function. Furthermore, for all 
$n\geq 1$ we have $\zeta_n=H_n({\bf c})/H_{n-1}({\bf c})$, and for $n\geq 1$,
\begin{equation}\label{2.3}
P_n(y) = \frac{1}{H_{n-1}({\bf c})}\det
\begin{pmatrix}
c_{0} & c_{1} & \cdots & c_{n}\\
c_{1} & c_{2} & \cdots & c_{n+1}\\
\vdots & \vdots & \ddots & \vdots\\
c_{n-1} & c_{n} & \cdots & c_{2n-1} \\
1 & y & \cdots & y^n
\end{pmatrix},
\end{equation}
where the polynomials $P_n(y)$ satisfy the $3$-term recurrence relation 
$P_0(y)=1$, $P_1(y)=y+s_0$, and
\begin{equation}\label{2.4}
P_{n+1}(y) = (y+s_n)P_n(y) - t_nP_{n-1}(y)\qquad (n\geq 1),
\end{equation}
for some sequences $(s_n)_{n\geq 0}$ and $(t_n)_{n\geq 1}$.
\end{lemma}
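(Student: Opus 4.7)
The plan is the classical construction via a Gram--Schmidt-type procedure relative to the functional $L$. I would first define $P_n(y)$ by requiring that it be a monic polynomial of degree $n$ satisfying $L(y^k P_n)=0$ for $0\le k\le n-1$. Writing $P_n(y)=y^n+\sum_{j=0}^{n-1}a_jy^j$, these $n$ conditions translate to a linear system in $a_0,\ldots,a_{n-1}$ whose coefficient matrix is the Hankel matrix $(c_{i+k})_{0\le i,k\le n-1}$ with determinant $H_{n-1}({\bf c})$. Hence existence and uniqueness of $P_n$ is equivalent to $H_{n-1}({\bf c})\neq 0$, and requiring this for every $n$ gives the stated condition.

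Next I would verify the explicit determinant formula \eqref{2.3}. Expanding along the bottom row shows the right-hand side is a polynomial in $y$ of degree $n$; the coefficient of $y^n$ equals $H_{n-1}({\bf c})/H_{n-1}({\bf c})=1$, so it is monic. For orthogonality, apply $L$ to $y^k$ times the bracketed determinant with $0\le k\le n-1$: by linearity this replaces $y^j$ in the bottom row by $c_{j+k}$, producing a row identical to row $k$ of the matrix above, so the determinant vanishes. By uniqueness, this polynomial must equal $P_n$.

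For the normalization I would use that $P_n$ is monic of degree $n$ and orthogonal to every polynomial of lower degree, so $\zeta_n=L(P_n^2)=L(y^nP_n)$. Applying $L$ to $y^n$ times formula \eqref{2.3} replaces the bottom row $(1,y,\ldots,y^n)$ by $(c_n,c_{n+1},\ldots,c_{2n})$, and the resulting determinant is precisely $H_n({\bf c})$. Hence $\zeta_n=H_n({\bf c})/H_{n-1}({\bf c})$.

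Finally, the three-term recurrence follows from expanding $yP_n(y)$ in the basis $\{P_j\}_{j\ge 0}$: write $yP_n=\sum_{j=0}^{n+1}\alpha_jP_j$ with $\alpha_j=L(P_n\cdot yP_j)/\zeta_j$. Since $yP_j$ has degree $j+1$, orthogonality of $P_n$ to polynomials of degree less than $n$ forces $\alpha_j=0$ for $j\le n-2$, while matching leading coefficients gives $\alpha_{n+1}=1$. The only surviving coefficients are $\alpha_n$ and $\alpha_{n-1}$, and setting $s_n=-\alpha_n$, $t_n=\alpha_{n-1}$ yields \eqref{2.4}. The whole argument is routine linear algebra; the one delicate point is the assertion $\zeta_n>0$, which is strictly stronger than $H_n({\bf c})\neq 0$ and should be flagged as a positive-definiteness hypothesis (satisfied, e.g., whenever $L$ arises from a positive measure), to be checked in each concrete application.
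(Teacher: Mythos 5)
Your proof is correct, and there is nothing to compare it against inside the paper: Lemma~\ref{lem:2.1} is stated there without proof, as a summary of well-known facts with citations to Ismail and Chihara, and your Gram--Schmidt construction together with the bottom-row expansion of the determinant formula \eqref{2.3} is precisely the classical argument given in those references. All four steps (solvability of the linear system for the coefficients of $P_n$ with coefficient matrix of determinant $H_{n-1}({\bf c})$, verification of \eqref{2.3} by row repetition, the evaluation $\zeta_n=L(y^nP_n)=H_n({\bf c})/H_{n-1}({\bf c})$, and the expansion of $yP_n$ in the orthogonal basis to get \eqref{2.4}) are sound. Your closing caveat is also well taken and is in fact a legitimate correction to the lemma as worded: the hypothesis $H_n({\bf c})\neq 0$ (quasi-definiteness) only yields $\zeta_n\neq 0$, while positivity of all $\zeta_n$ is equivalent to $H_n({\bf c})>0$ for all $n$ (positive-definiteness). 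The paper's own applications show that the stronger reading fails in the generality needed: by Corollary~\ref{cor:2.3} one has $\zeta_n=t_1t_2\cdots t_n$, and for the Bernoulli sequence Touchard's coefficients $t_n=-n^4/\bigl(4(2n+1)(2n-1)\bigr)$ are negative, so the $\zeta_n$ alternate in sign. Thus ``positive numbers'' in the statement should be read as ``nonzero numbers,'' exactly as you flag.
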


We now multiply both sides of \eqref{2.3} by $y^r$ and replace $y^j$ by $c_j$, 
which includes replacing the constant term $1$ by $c_0$ for $r=0$.
Then for $0\leq r\leq n-1$ the last row of the matrix in \eqref{2.3} is 
identical with one of the previous rows, and thus the determinant is $0$. When 
$r=n$, the determinant is $H_n({\bf c})$. We therefore have the following 
result.

\begin{corollary}\label{cor:2.2}
With the sequence $(c_k)$ and the polynomials $P_n(y)$ as above, we have
\begin{equation}\label{2.5}
y^rP_n(y)\bigg|_{y^k=c_k} = \begin{cases}
0 &\hbox{when}\;\; 0\leq r\leq n-1,\\
H_n({\bf c})/H_{n-1}({\bf c}) &\hbox{when}\;\; r=n.
\end{cases}
\end{equation}
\end{corollary}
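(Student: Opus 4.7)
The plan is to execute the hint given in the paragraph preceding the corollary. Starting from the determinantal formula \eqref{2.3} for $P_n(y)$, multiply both sides by $y^r$; since the indeterminate $y$ appears only in the last row, this multiplication distributes onto that row alone, giving
\[
y^r P_n(y) = \frac{1}{H_{n-1}(\mathbf{c})}\det
\begin{pmatrix}
c_{0} & c_{1} & \cdots & c_{n}\\
c_{1} & c_{2} & \cdots & c_{n+1}\\
\vdots & \vdots & \ddots & \vdots\\
c_{n-1} & c_{n} & \cdots & c_{2n-1} \\
y^r & y^{r+1} & \cdots & y^{r+n}
\end{pmatrix}.
\]
(For $r=0$ the constant entry $1$ of the last row in \eqref{2.3} becomes $y^0$, consistent with the convention that the substitution $y^k \mapsto c_k$ also sends $1 = y^0$ to $c_0$.)

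Next, apply the substitution $y^k \mapsto c_k$. This is a linear map on the ring of polynomials in $y$ (for each fixed expansion of the determinant it acts entrywise on the last row), so it commutes with taking the determinant. The effect is to replace the last row by $(c_r, c_{r+1}, \ldots, c_{r+n})$.

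Now split into two cases. If $0 \le r \le n-1$, then the new last row $(c_r, c_{r+1}, \ldots, c_{r+n})$ coincides exactly with the $(r+1)$-th row of the matrix above, so the determinant vanishes and we obtain $0$. If $r = n$, then the last row becomes $(c_n, c_{n+1}, \ldots, c_{2n})$, and the resulting matrix is precisely the Hankel matrix $(c_{i+j})_{0 \le i, j \le n}$ from \eqref{1.0}, whose determinant is $H_n(\mathbf{c})$ by \eqref{1.4}; dividing by $H_{n-1}(\mathbf{c})$ yields the claimed value.

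There is no real obstacle here: the result is essentially a bookkeeping consequence of Lemma~\ref{lem:2.1}, and the only thing to double-check is that the indexing of the rows matches up so that the duplicated-row argument applies for all $r$ in the range $0 \le r \le n-1$.
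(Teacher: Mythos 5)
Your proof is correct and follows the same route as the paper: multiply the determinantal formula \eqref{2.3} by $y^r$, perform the substitution $y^k\mapsto c_k$ (which, by linearity and the fact that $y$ occurs only in the last row, amounts to replacing that row by $(c_r,\ldots,c_{r+n})$), and then observe that this row duplicates row $r+1$ when $0\le r\le n-1$, while for $r=n$ the matrix becomes the Hankel matrix \eqref{1.0} with determinant $H_n({\bf c})$. This matches the paper's argument in all essentials, including the remark about the constant term $1=y^0$ being sent to $c_0$.
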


The polynomials $P_n(y)$ are known as ``the monic orthogonal polynomials 
belonging to the sequence ${\bf c}=(c_0, c_1, \ldots)$", or ``the polynomials 
orthogonal with respect to $\bf c$".
Another important consequence of Lemma~\ref{lem:2.1} is the main reason for the
specific form of the general formula \eqref{1.5}.

\begin{corollary}\label{cor:2.3}
With the sequence $(t_n)$ as in \eqref{2.4}, we have
\begin{equation}\label{2.6}
H_n({\bf c}) = t_1^nt_2^{n-1}\cdots t_{n-1}^2t_n\qquad (n\geq 0).
\end{equation}
\end{corollary}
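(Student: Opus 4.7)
The plan is to express $H_n({\bf c})$ as a telescoping product of the normalization constants $\zeta_n$ from Lemma~\ref{lem:2.1}, and then identify each ratio $\zeta_n/\zeta_{n-1}$ with $t_n$ by using the three-term recurrence \eqref{2.4} together with orthogonality \eqref{2.2}.

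First I would note that Lemma~\ref{lem:2.1} gives $\zeta_n = H_n({\bf c})/H_{n-1}({\bf c})$ for $n\geq 1$, together with $\zeta_0=1$ and $H_0({\bf c})=c_0=1$ under the stated normalization. Cascading this relation yields
\begin{equation*}
H_n({\bf c}) = \zeta_1\zeta_2\cdots\zeta_n,
\end{equation*}
with the $n=0$ case being an empty product equal to $1$, matching both sides of \eqref{2.6}. So it remains only to show $\zeta_n = t_1t_2\cdots t_n$, or equivalently $t_n = \zeta_n/\zeta_{n-1}$ for all $n\geq 1$.

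To obtain this, I would multiply the recurrence \eqref{2.4} by $P_{n-1}(y)$ and apply $L$. The left side $L(P_{n+1}P_{n-1})$ vanishes by orthogonality, and $L((y+s_n)P_n P_{n-1}) = L(yP_nP_{n-1})$ because $L(P_nP_{n-1})=0$. Thus
\begin{equation*}
L(yP_n(y)P_{n-1}(y)) = t_n\,L(P_{n-1}(y)^2) = t_n\zeta_{n-1}.
\end{equation*}
The key observation is then that $yP_{n-1}(y)$ is a monic polynomial of degree $n$, so it expands in the orthogonal basis as $yP_{n-1}(y) = P_n(y) + \sum_{j<n}\alpha_j P_j(y)$. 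Applying $L(P_n\,\cdot)$ and using orthogonality kills all lower-order terms, giving $L(yP_n P_{n-1}) = L(P_n^2) = \zeta_n$. Hence $t_n\zeta_{n-1}=\zeta_n$, as required.

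Putting the two pieces together by telescoping yields $\zeta_n=\prod_{j=1}^n t_j$, and therefore
\begin{equation*}
H_n({\bf c}) = \prod_{k=1}^n\zeta_k = \prod_{k=1}^n\prod_{j=1}^k t_j = \prod_{j=1}^n t_j^{\,n-j+1} = t_1^n t_2^{n-1}\cdots t_{n-1}^2 t_n.
\end{equation*}
There is no real obstacle in this argument; the only step requiring a small idea is the evaluation of $L(yP_nP_{n-1})$, which rests on the standard trick of expanding $yP_{n-1}$ in the orthogonal basis rather than attempting a direct computation. Everything else is bookkeeping with the three-term recurrence and the defining property \eqref{2.2}.
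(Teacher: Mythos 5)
Your proof is correct. The paper itself never proves Corollary~\ref{cor:2.3}: it is stated as a well-known consequence of Lemma~\ref{lem:2.1}, with the references \cite[Ch.~2]{Is} and \cite[pp.~7--10]{Chi} standing in for the argument. What you have written is precisely the standard proof found in those references, so in effect you have supplied the missing details rather than taken a different route: the telescoping $H_n({\bf c})=\zeta_1\zeta_2\cdots\zeta_n$, the identity $L\bigl(yP_n(y)P_{n-1}(y)\bigr)=t_n\zeta_{n-1}$ obtained by multiplying \eqref{2.4} by $P_{n-1}$ and applying $L$, and the evaluation $L\bigl(yP_n(y)P_{n-1}(y)\bigr)=\zeta_n$ via the expansion of the monic polynomial $yP_{n-1}(y)$ in the orthogonal basis are exactly the three ingredients one finds there; combined they give $t_n=\zeta_n/\zeta_{n-1}=H_n({\bf c})H_{n-2}({\bf c})/H_{n-1}({\bf c})^2$, whose telescoped product is \eqref{2.6}. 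You were also right to flag the one hypothesis the statement silently uses: for $n=0$ (and for the telescoping to start correctly) one needs $H_0({\bf c})=c_0=1$, which is the normalization the paper declares after \eqref{2.1} and which is forced anyway by $\zeta_0=L(P_0^2)=c_0=1$ in Lemma~\ref{lem:2.1}; without it the right-hand side of \eqref{2.6} would acquire an extra factor $c_0^{n+1}$.
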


The next lemma, which will also be required later in this paper, deals with the
case where $\bf c$ is a sequence of functions in a single variable $x$. It was
proved as Lemma~5.7 in \cite{DJ2}.

\begin{lemma}\label{lem:2.4}
Let $c_k(x)$ be a sequence of $C^1$ functions, and let
$P_n(y;x)$ be the corresponding monic orthogonal polynomials. If $c_k(x_0)=0$
for some $x_0\in{\mathbb C}$ and for all $k\geq 0$, then $P_n(y;x_0)$ are the 
monic orthogonal polynomials with respect to the sequence of derivatives
$c_k'(x_0)$, as long as $H_n(c_k'(x_0))$ are all nonzero.
\end{lemma}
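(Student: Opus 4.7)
The plan is to use the explicit determinantal formula \eqref{2.3} for $P_n(y;x)$ and take a careful limit as $x\to x_0$. Since $c_k$ is $C^1$ with $c_k(x_0)=0$, Taylor's theorem gives $c_k(x)=(x-x_0)c_k'(x_0)+o(x-x_0)$, so every entry of the Hankel matrix $(c_{i+j}(x))$ vanishes to first order at $x_0$. This is the structural observation that drives everything.

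First I would analyze the denominator of \eqref{2.3}: factoring $(x-x_0)$ out of each of the $n$ rows of the Hankel matrix defining $H_{n-1}(c_k(x))$ gives
\[
H_{n-1}(c_k(x))=(x-x_0)^n\, H_{n-1}(c_k'(x_0))+o\bigl((x-x_0)^n\bigr),
\]
which by hypothesis is nonzero on a punctured neighborhood of $x_0$, so $P_n(y;x)$ is well-defined there. Performing the same row-factoring on the numerator of \eqref{2.3} — noting that the last row $(1,y,\ldots,y^n)$ is $x$-independent and contributes no factor — shows that the numerator is also $(x-x_0)^n$ times a determinant converging to the analogous matrix with entries $c_k'(x_0)$ in the first $n$ rows. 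The common factors $(x-x_0)^n$ then cancel, and letting $x\to x_0$ produces precisely formula \eqref{2.3} for the derivative sequence $(c_k'(x_0))_{k\ge 0}$. Monicity of degree $n$ is preserved under the limit, and the uniqueness clause of Lemma~\ref{lem:2.1} identifies $P_n(y;x_0)$ with the monic orthogonal polynomial for $(c_k'(x_0))$.

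The main obstacle I expect is simply justifying that the apparent $0/0$ indeterminate form at $x_0$ resolves into a genuine polynomial identity; the $C^1$ hypothesis is used precisely to extract the linear Taylor terms cleanly, and the nonvanishing of the derivative Hankel determinants ensures both that $P_n(y;x)$ exists near $x_0$ and that the limit is the desired object. As a sanity check, one can also differentiate the orthogonality identity $L_x\bigl(P_m(\cdot;x)P_n(\cdot;x)\bigr)=\zeta_n(x)\delta_{m,n}$ at $x=x_0$: since $L_{x_0}\equiv 0$ by hypothesis, all terms involving derivatives of the $P_n$'s drop out, leaving $L_{x_0}'\bigl(P_m(y;x_0)P_n(y;x_0)\bigr)=\zeta_n'(x_0)\delta_{m,n}$, which directly exhibits orthogonality with respect to $(c_k'(x_0))$ and confirms the identification obtained by the determinantal route.
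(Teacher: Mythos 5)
Your main argument is correct, but note that this paper never proves Lemma~\ref{lem:2.4} itself: it is quoted from \cite{DJ2} (Lemma~5.7 there), and the proof given there is essentially what you relegate to a sanity check --- write the orthogonality relations $L_x\bigl(y^rP_n(y;x)\bigr)=0$ for $0\le r\le n-1$, differentiate in $x$ at $x_0$, use $L_{x_0}\equiv 0$ to kill all terms involving $x$-derivatives of the coefficients of $P_n$, and then invoke the uniqueness clause of Lemma~\ref{lem:2.1} for the moment sequence $\bigl(c_k'(x_0)\bigr)$. Your determinantal route --- factoring $(x-x_0)$ out of each moment row in \eqref{2.3}, cancelling $(x-x_0)^n$ against the identical factor in $H_{n-1}(c_k(x))$, and letting $x\to x_0$ --- is genuinely different, and in one respect more robust: it uses only continuity of the difference quotients $c_k(x)/(x-x_0)$, never differentiability of the coefficients of $P_n(y;x)$ at $x_0$ (which the $C^1$ hypothesis alone does not supply, so your functional argument, taken literally, has a gap that the determinantal one avoids), and it yields as a by-product the existence of $P_n(y;x)$ on a punctured neighborhood of $x_0$. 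Two small points you should make explicit: what the limit computation identifies is $\lim_{x\to x_0}P_n(y;x)$, so equating it with the symbol $P_n(y;x_0)$ in the statement tacitly uses continuity of the family at $x_0$ --- the intended reading, since in the applications (Propositions~\ref{prop:3.4} and~\ref{prop:3.5}) the recurrence coefficients are polynomials in $x$; and for complex $x_0$ the first-order expansion $c_k(x)=(x-x_0)c_k'(x_0)+o(x-x_0)$ should be read as complex differentiability at $x_0$, which again holds in all the paper's applications.
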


\subsection{A fundamental lemma}
We are now ready to state and prove a general lemma which will be used in
most of the proofs that follow.

\begin{lemma}\label{lem:2.5}
Let ${\bf c}=(c_0, c_1, \ldots)$ be a sequence such that the unique sequence
$P_n(y)$, $n\geq 0$, of polynomials orthogonal with respect to $\bf c$ exists.
Let $\alpha$ be a constant and define the sequence ${\bf b}=(b_0, b_1, \ldots)$
by 
\begin{equation}\label{2.7}
b_k:=\begin{cases}
\alpha, &k=0,\\
c_{k-1}, &k\geq 1.
\end{cases}
\end{equation}
Then for all $n\geq 2$ we have
\begin{equation}\label{2.8}
\frac{H_{n+1}(b_k)}{H_n(c_k)} = -s_n\frac{H_n(b_k)}{H_{n-1}(c_k)}
-t_n\frac{H_{n-1}(b_k)}{H_{n-2}(c_k)},
\end{equation}
where the sequences $(s_n)$ and $(t_n)$ are as in \eqref{2.4}.
\end{lemma}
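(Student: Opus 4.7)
The plan is to convert the Hankel determinant identity into an identity about a linear functional applied to the orthogonal polynomials $P_n$, and then use the three-term recurrence \eqref{2.4}.

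First, I would introduce the linear functional $M$ on polynomials defined by $M(y^k)=b_k$, and observe the crucial compatibility with the functional $L$ of Corollary~\ref{cor:2.2}: since $b_k=c_{k-1}$ for $k\ge 1$, we have $M(y\cdot f(y))=L(f(y))$ for every polynomial $f$. In particular, for $n\ge 1$, $M(yP_n(y))=L(P_n(y))=0$ by the $r=0$ case of \eqref{2.5}.

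Next, I would write $H_{n+1}(b_k)$ as a determinant and move its first row (the only row involving $\alpha$) to the bottom; this introduces a sign $(-1)^{n+1}$. Comparing the resulting matrix with the determinant formula \eqref{2.3} for $P_{n+1}(y)H_n({\bf c})$, the two differ only in that the bottom row $(1,y,y^2,\ldots,y^{n+1})$ is replaced by $(b_0,b_1,\ldots,b_{n+1})$. Expanding along that bottom row in both cases (so that the cofactors agree up to the factor $H_n({\bf c})/P_{n+1}(y)$'s coefficients), I obtain the key identity
\begin{equation*}
H_{n+1}(b_k) \;=\; (-1)^{n+1}\,H_n(c_k)\,M\bigl(P_{n+1}(y)\bigr).
\end{equation*}
The same argument with $n$ replaced by $n-1$ and $n-2$ gives analogous formulas for $H_n(b_k)$ and $H_{n-1}(b_k)$.

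Finally, setting $\mu_m:=M(P_m(y))$, the three displayed identities become
$H_{m+1}(b_k)/H_m(c_k)=(-1)^{m+1}\mu_{m+1}$ for $m=n,n-1,n-2$. Applying $M$ to the recurrence $P_{n+1}=(y+s_n)P_n-t_nP_{n-1}$ and using $M(yP_n)=0$ (from the first paragraph), we get $\mu_{n+1}=s_n\mu_n-t_n\mu_{n-1}$ for $n\ge 1$. Multiplying through by $(-1)^{n+1}$ and re-expressing in terms of the Hankel ratios produces exactly \eqref{2.8}, because the sign $(-1)^{n+1}$ turns $+s_n\mu_n$ into $-s_n\cdot\frac{H_n(b_k)}{H_{n-1}(c_k)}$ and turns $-t_n\mu_{n-1}$ into $-t_n\cdot\frac{H_{n-1}(b_k)}{H_{n-2}(c_k)}$. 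The main (and essentially only) obstacle is the bookkeeping in the second step—correctly identifying $H_{n+1}(b_k)$, up to sign and a factor $H_n(c_k)$, as $M$ applied to $P_{n+1}$—once this is clean, the rest is a direct application of the recurrence and the annihilation identity $M(yP_n)=L(P_n)=0$.
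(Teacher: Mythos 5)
Your proposal is correct and is essentially the paper's own proof in different notation: moving the $\alpha$-row to the bottom and reading off the determinant via \eqref{2.3} is exactly the paper's step yielding \eqref{2.9}, since your $M\bigl(P_{n}(y)\bigr)$ equals $\frac{P_{n}(y)-P_{n}(0)}{y}\big|_{y^{k}=c_{k}}+\alpha P_{n}(0)$, and applying $M$ to the recurrence \eqref{2.4} together with $M\bigl(yP_{n}(y)\bigr)=L\bigl(P_{n}(y)\bigr)=0$ is precisely the manipulation that produces \eqref{2.11} and kills the $P_{n}(y)$ term there. The linear-functional packaging is tidier bookkeeping, but the decomposition and every key ingredient coincide with the paper's argument.
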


Some care must be taken when $\alpha=0$. In this case $H_0(b_k)=0$ and thus 
$P_1(y)$, as given by \eqref{2.3}, does not exist. 
However, as long as $H_n(b_k)\neq 0$ for $k\geq 1$, due to uniqueness the terms
$P_n(y)$, for $n\geq 2$, are still given by \eqref{2.3}; meanwhile, for $n=1$,
we can compute the Hankel determinant directly.

\begin{proof}[Proof of Lemma~\ref{lem:2.5}]
By the definitions \eqref{1.0}, \eqref{1.4}, and \eqref{2.7} we have
\begin{align*}
H_{n}(b_{k}) & =\det\begin{pmatrix}\alpha & c_{0} & c_{1} & \cdots & c_{n-1}\\
c_{0} & c_{1} & c_{2} & \cdots & c_{n}\\
\vdots & \vdots & \vdots & \ddots & \vdots\\
c_{n-1} & c_{n} & c_{n+1} & \cdots & c_{2n-1}
\end{pmatrix}\\
 & =(-1)^{n}\det\begin{pmatrix}c_{0} & c_{1} & c_{2} & \cdots & c_{n}\\
\vdots & \vdots & \vdots & \ddots & \vdots\\
c_{n-1} & c_{n} & c_{n+1} & \cdots & c_{2n-1}\\
\alpha & c_{0} & c_{1} & \cdots & c_{n-1}
\end{pmatrix}\\
 & =(-1)^{n}H_{n-1}(c_{k})\left(
\frac{P_{n}(y)-P_{n}(0)}{y}\bigg|_{y^{k}=c_{k}}+\alpha P_n(0)\right),
\end{align*}
where in the last equation we have used \eqref{2.3}. Hence
\begin{equation}\label{2.9}
\frac{P_{n}(y)-P_{n}(0)}{y}\bigg|_{y^{k}=c_{k}}+\alpha P_n(0)
= (-1)^n\frac{H_{n}(b_{k})}{H_{n-1}(c_{k})}.
\end{equation}
Next, setting $y=0$ in \eqref{2.4}, we get
\begin{equation}\label{2.10}
P_{n+1}(0) = s_nP_n(0)-t_nP_{n-1}(0).
\end{equation}
We then subtract \eqref{2.10} from \eqref{2.4}, divide both sides by $y$, and
subtract from the resulting equation once again \eqref{2.10}, multiplied by
$\alpha$. This gives
\begin{align}
\frac{P_{n+1}(y)-P_{n+1}(0)}{y}&-\alpha P_{n+1}(0) =
P_{n}(y)+s_{n}\left(\frac{P_{n}(y)-P_{n}(0)}{y}-\alpha P_{n}(0)\right)\label{2.11}\\
&\quad -t_{n}\left(\frac{P_{n-1}(y)-P_{n-1}(0)}{y}-\alpha P_{n-1}(0)\right).\nonumber
\end{align}
Finally, if we evaluate $y^k=c_k$, then by \eqref{2.5} the first term on the
right-hand side of \eqref{2.11}, namely $P_n(y)$, vanishes, and \eqref{2.11} 
with \eqref{2.9} yields the desired identity \eqref{2.8}.
\end{proof}

\section{Hankel determinant identities, I}\label{sec3}

In this section we deal with those identities whose proofs follow most directly
from Lemma~\ref{lem:2.5}. In addition to the harmonic numbers ${\mathcal H}_n$
in \eqref{1.9}, we require the following related sequences: For $n\geq 1$ we
denote
\begin{equation}\label{3.1}
{\mathcal H}_{1,n}^{-}:=\sum_{j=1}^n\frac{(-1)^{j-1}}{j},\qquad
{\mathcal H}_{2,n}^{-}:=\sum_{j=1}^n\frac{(-1)^{j-1}}{j^2}.
\end{equation}
We can now state and then prove the following six results.

\begin{proposition}\label{prop:3.1}
If the sequence $(b_0, b_1, \ldots)$ is defined by
\[
b_k:=\begin{cases}
0,& k=0,\\
B_{k-1}, & k\geq 1.
\end{cases}
\]
then
\begin{equation}\label{3.2}
H_n(b_k) = \frac{2\cdot(2n+1)!}{n!^3}\cdot{\mathcal H}_{2,n}^{-}
\cdot H_n(B_k).
\end{equation}
\end{proposition}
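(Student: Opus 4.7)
The plan is to apply Lemma~\ref{lem:2.5} with $c_k = B_k$ and $\alpha = 0$, which is precisely the definition of $b_k$ in the proposition. This immediately produces, for $n \geq 2$, the second-order recurrence
$$\frac{H_{n+1}(b_k)}{H_n(B_k)} = -s_n \frac{H_n(b_k)}{H_{n-1}(B_k)} - t_n \frac{H_{n-1}(b_k)}{H_{n-2}(B_k)},$$
where $(s_n)$ and $(t_n)$ are the three-term recurrence coefficients \eqref{2.4} for the monic orthogonal polynomials attached to the Bernoulli moment sequence. These coefficients are classical---they can be read off the Jacobi continued-fraction expansion of $t/(e^t-1)$---and through Corollary~\ref{cor:2.3} they simultaneously give the standard product evaluation of $H_n(B_k)$.

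Introducing the normalized ratio $R_n := H_n(b_k)/H_n(B_k)$ and using Lemma~\ref{lem:2.1} to write $H_n(B_k)/H_{n-1}(B_k) = t_1\cdots t_n$, the displayed recurrence collapses to the clean three-term relation
$$t_{n+1} R_{n+1} + s_n R_n + R_{n-1} = 0 \qquad (n \geq 2).$$
It therefore suffices to verify the initial values $R_0 = 0$, $R_1$, and $R_2$ against the conjectured closed form $\tilde R_n := \frac{2(2n+1)!}{n!^3}\mathcal{H}_{2,n}^{-}$ by direct computation of small Hankel determinants, and then to check that $\tilde R_n$ itself satisfies the same three-term relation. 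A small caveat is that, because $\alpha = 0$, the $n=1$ case of \eqref{2.3} degenerates; this is covered by the remark immediately following Lemma~\ref{lem:2.5}, and my induction starts at $n=2$ with small-$n$ base cases computed by hand.

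The main obstacle is this final verification. Writing $\mathcal{H}_{2,n}^{-} = \mathcal{H}_{2,n-1}^{-} + (-1)^{n-1}/n^2$ splits $t_{n+1}\tilde R_{n+1} + s_n \tilde R_n + \tilde R_{n-1} = 0$ into two pieces: one collecting the common $\mathcal{H}_{2,n-1}^{-}$ contributions, and one collecting the pure $\pm 1/n^2$ increments. Each piece must vanish separately---the first because it contains an undetermined transcendental factor, the second as a pure rational identity in $n$---and both reductions depend crucially on the explicit closed forms of $s_n$ and $t_n$ for the \emph{full} Bernoulli sequence (not just its even-indexed subsequence, which is what the existing literature typically treats). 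The numerical agreement at $n=1,2$ already forces the unusual prefactor $2(2n+1)!/n!^3$, giving strong evidence that both pieces do cancel after routine, if tedious, rational-function algebra.
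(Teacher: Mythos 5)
Your proposal is correct and follows essentially the same route as the paper's own proof: apply Lemma~\ref{lem:2.5} to $c_k=B_k$ with $\alpha=0$, insert Touchard's classical coefficients $s_n=\tfrac12$, $t_n=-n^4/\big(4(2n+1)(2n-1)\big)$, and verify that the conjectured closed form satisfies the resulting three-term recurrence together with the base cases $n=1,2$. The only cosmetic difference is your normalization $R_n=H_n(b_k)/H_n(B_k)$ instead of the paper's $r_n=H_n(b_k)/H_{n-1}(B_k)$ (equivalent via Corollary~\ref{cor:2.3}), and your split of the final verification into the $\mathcal{H}_{2,n}^{-}$ piece and the rational piece is exactly the paper's reduction to the identity $(n+1)^2\mathcal{H}_{2,n+1}^{-}=(2n+1)\mathcal{H}_{2,n}^{-}+n^2\mathcal{H}_{2,n-1}^{-}$, which does hold.
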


The known Hankel determinant on the right of \eqref{3.2}, and in all further
results, will be given in the relevant proofs below. 
%In addition, and for the sake of completeness, all our results will be 
%summarized in explicit form in the final section.

\begin{proposition}\label{prop:3.2}
If the sequence $(b_0, b_1, \ldots)$ is defined by
\[
b_k:=\begin{cases}
0,& k=0,\\
E_{k-1}(1),& k\geq 1,
\end{cases}
\]
then
\begin{equation}\label{3.3}
H_n(b_k) = (-1)^{n-1}\frac{2^{n+1}}{n!}\cdot{\mathcal H}_{1,n}^{-}
\cdot H_n\big(E_k(1)\big).
\end{equation}
\end{proposition}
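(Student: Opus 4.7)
The plan is to apply Lemma~\ref{lem:2.5} with $c_k=E_k(1)$ and $\alpha=0$, since the sequence $(b_k)$ is exactly the shift described there. First I would invoke the known evaluation of $H_n(E_k(1))$ together with the coefficients $s_n, t_n$ appearing in the three-term recurrence \eqref{2.4} for the monic orthogonal polynomials belonging to $(E_k(1))_{k\geq 0}$, presumably taken from \cite{DJ1} or \cite{DJ2}; via Corollary~\ref{cor:2.3} these data completely determine the right-hand side of \eqref{3.3}.

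Next, I would introduce the normalized ratio $R_n:=H_n(b_k)/H_n(E_k(1))$, so that the goal becomes proving $R_n=(-1)^{n-1}\,2^{n+1}\mathcal{H}_{1,n}^{-}/n!$. Using $H_n(E_k(1))/H_{n-1}(E_k(1))=t_1 t_2\cdots t_n$ from Corollary~\ref{cor:2.3}, the identity \eqref{2.8} can be rewritten as a linear three-term recurrence in $R_n$ whose coefficients depend only on $s_n$ and $t_n$. I would also compute $R_1$ and $R_2$ directly from the definition of $H_n(b_k)$, using the value $b_1=E_0(1)=1$ to handle the $\alpha=0$ subtlety noted immediately after the statement of Lemma~\ref{lem:2.5}.

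The identity should then follow by induction on $n$. Assuming the claimed closed form for $R_{n-1}$ and $R_n$, substitute into the recurrence and check that $R_{n+1}$ takes the predicted form. The only genuinely new ingredient in the induction step is the telescoping identity
\begin{equation*}
\mathcal{H}_{1,n+1}^{-}-\mathcal{H}_{1,n}^{-}=\frac{(-1)^n}{n+1},
\end{equation*}
which should absorb the single new term generated by the recurrence.

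The main obstacle I expect is purely algebraic bookkeeping: one must verify that the explicit $s_n$ and $t_n$ for $(E_k(1))$ combine with the prefactor $(-1)^{n-1}\,2^{n+1}/n!$ so that, after substituting the inductive hypothesis, the leftover difference is exactly $(-1)^n/(n+1)$. If signs, powers of $2$, or factorial factors fail to line up, one would have to reconcile sign and normalization conventions with the cited source, or else rederive $s_n, t_n$ from the generating function $2e^t/(e^t+1)$ directly, before the induction closes.
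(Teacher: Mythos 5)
Your proposal is correct and follows essentially the same route as the paper: apply Lemma~\ref{lem:2.5} with $c_k=E_k(1)$, $\alpha=0$, pull $s_n=-\tfrac12$, $t_n=-\tfrac{n^2}{4}$ and the Al-Salam--Carlitz evaluation of $H_n\big(E_k(1)\big)$ from the literature, check the two initial ratios directly, and close the three-term recurrence using the telescoping identity $\mathcal{H}_{1,n+1}^{-}-\mathcal{H}_{1,n}^{-}=\tfrac{(-1)^n}{n+1}$. The only cosmetic difference is that you normalize by $H_n\big(E_k(1)\big)$ where the paper uses $r_n=H_n(b_k)/H_{n-1}\big(E_k(1)\big)$, which is an equivalent bookkeeping choice.
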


Here it should be mentioned that
\begin{equation}\label{3.4}
E_k(1) = \frac{2}{k+1}\left(2^{k+1}-1\right)B_{k+1}\quad (k\geq 1);
\end{equation}
see, e.g., \cite[Eq.~24.4.26]{DLMF}. The following result can be seen as the 
odd-index analogue of Proposition~\ref{prop:3.2}.

\begin{proposition}\label{prop:3.3}
If the sequence $(b_0, b_1, \ldots)$ is defined by
\[
b_k:=\begin{cases}
0,& k=0,\\
E_{2k-1}(1),& k\geq 1,
\end{cases}
\]
then
\begin{equation}\label{3.5}
H_n(b_k) = (-1)^n\frac{2^{2n+1}}{(2n+1)!}\cdot{\mathcal H}_n
\cdot H_n\big(E_{2k+1}(1)\big).
\end{equation}
\end{proposition}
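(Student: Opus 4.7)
\emph{Proof plan.} The approach is to apply Lemma \ref{lem:2.5} with $\alpha = 0$ and $c_k := E_{2k+1}(1)$, so that the shifted sequence $b_k$ of the lemma matches the hypothesis of the proposition. I take two pieces of data as known: the closed-form evaluation of the reference Hankel determinant $H_n(E_{2k+1}(1))$, and the coefficients $s_n, t_n$ of the three-term recurrence \eqref{2.4} for the monic orthogonal polynomials belonging to $(E_{2k+1}(1))$. Both should already be available in \cite{DJ1,DJ2}, or obtainable directly from the $J$-fraction of the relevant generating function; via \eqref{3.4} this reduces essentially to the computation behind \eqref{1.6}. The crucial structural fact driving the shape of the answer is that after normalizing $c_0 \mapsto 1$ one has $s_n = (2n+1)^2/2$ and $t_n = n^2(2n-1)(2n+1)/4$, both rational in $n$ with only linear factors.

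Setting $f_n := H_n(b_k)/H_n(c_k)$ and using Corollary \ref{cor:2.3} to identify $\zeta_n := H_n(c_k)/H_{n-1}(c_k) = t_1 t_2 \cdots t_n$, so that $\zeta_{n+1}/\zeta_n = t_{n+1}$ and $\zeta_{n-1}/\zeta_n = 1/t_n$, the identity \eqref{2.8} rearranges into the second-order linear recurrence
\begin{equation*}
t_{n+1}\, f_{n+1} + s_n\, f_n + f_{n-1} = 0 \qquad (n \geq 2).
\end{equation*}
Proposition \ref{prop:3.3} then becomes the claim that $f_n$ coincides with $F_n := (-1)^n\, 2^{2n+1}\,\mathcal{H}_n/(2n+1)!$; since the recurrence is linear of order two, this reduces to exhibiting two linearly independent solutions and matching two initial conditions.

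I would produce the solutions by reduction of order. First, I would verify directly from the explicit $s_n, t_n$ that the rational sequence $G_n := (-1)^n\,2^{2n+1}/(2n+1)!$ satisfies the recurrence---a short rational-function identity in $n$. Next, seeking a second solution of the form $G_n\, u_n$ and using that $G_n$ itself solves the recurrence, a standard manipulation forces $v_n := u_n - u_{n-1}$ to satisfy $v_{n+1}/v_n = G_{n-1}/(t_{n+1} G_{n+1})$; the explicit values collapse this ratio to $n/(n+1)$, so $v_n = 1/n$ up to scale, and hence $u_n = \mathcal{H}_n$ up to an additive constant. Thus $G_n\,\mathcal{H}_n$ is a second, linearly independent, solution, and the general solution is $a\, G_n + b\, G_n\, \mathcal{H}_n$. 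The initial conditions $f_0 = 0$ (forced by $b_0 = 0$, together with $\mathcal{H}_0 = 0$) and $f_1 = -4/3$ (a direct computation from Table \ref{tab:1}) pin down $a = 0$ and $b = 1$, giving $f_n = G_n\,\mathcal{H}_n = F_n$ and hence the proposition.

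The main obstacle is the explicit derivation of $s_n$ and $t_n$ for $(E_{2k+1}(1))$; once these are in hand, both the check that $G_n$ solves the recurrence and the telescoping calculation that produces $G_n\,\mathcal{H}_n$ are routine. The harmonic number $\mathcal{H}_n$ appears in the answer precisely because the ratio $G_{n-1}/(t_{n+1}G_{n+1})$ collapses to $n/(n+1)$---which is exactly the telescoping data needed, via reduction of order, to produce $\mathcal{H}_n$ from the specific arithmetic of the linear factors of $t_n$.
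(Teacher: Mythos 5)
Your strategy coincides with the paper's: apply Lemma~\ref{lem:2.5} with $\alpha=0$ to $c_k:=E_{2k+1}(1)$, take the evaluation \eqref{3.15} and the coefficients $s_n=(2n+1)^2/2$, $t_n=n^2(2n-1)(2n+1)/4$ as known inputs, and settle the resulting second-order linear recurrence. Your rearrangement of \eqref{2.8} into $t_{n+1}f_{n+1}+s_nf_n+f_{n-1}=0$ for $f_n:=H_n(b_k)/H_n(c_k)$ is correct (Corollary~\ref{cor:2.3} as stated presupposes $c_0=1$, whereas here $c_0=\tfrac12$, but the extra factor of $c_0$ cancels from the ratios $\zeta_{n+1}/\zeta_n$ you actually use), and your reduction-of-order derivation showing that $G_n\mathcal{H}_n$ is a second solution is a genuine improvement over the paper's bare verification, since it explains where the harmonic number comes from. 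The gap is in the final step. Lemma~\ref{lem:2.5} asserts \eqref{2.8} only for $n\geq 2$, so the recurrence you are entitled to use couples $f_1,f_2,f_3,\ldots$ and nothing else; $f_0$ does not occur in any instance of it. Consequently the constants $a,b$ in $f_n=aG_n+bG_n\mathcal{H}_n$ (a representation valid for $n\geq 1$) must be determined from $(f_1,f_2)$, and the condition $f_0=0$ is vacuous: it cannot force $a=0$.

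This is not a pedantic objection, because $f_1$ alone does not separate your two basis solutions: $\mathcal{H}_1=1$ gives $G_1\mathcal{H}_1=G_1=-\tfrac43$, so everything you verify is equally consistent with $f_n=G_n$ (that is, $a=1$, $b=0$), which would yield the false evaluation $H_n(b_k)=(-1)^n\tfrac{2^{2n+1}}{(2n+1)!}H_n\big(E_{2k+1}(1)\big)$; this already fails at $n=2$. The repair is exactly what the paper's template proofs (Propositions~\ref{prop:3.1} and~\ref{prop:3.2}) do: verify the two consecutive values $n=1$ and $n=2$. Using Table~\ref{tab:1} and \eqref{3.4} (which gives $E_7(1)=-\tfrac{17}{8}$), one computes
\[
H_2(b_k)=\det\begin{pmatrix}0 & \tfrac12 & -\tfrac14\\ \tfrac12 & -\tfrac14 & \tfrac12\\ -\tfrac14 & \tfrac12 & -\tfrac{17}{8}\end{pmatrix}=\frac{27}{64},
\qquad
H_2\big(E_{2k+1}(1)\big)=\frac{135}{128},
\]
whence $f_2=\tfrac25=G_2\mathcal{H}_2$, while $G_2=\tfrac{4}{15}\neq\tfrac25$. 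With $(f_1,f_2)$ in hand you get $a=0$, $b=1$, and your argument closes. (Alternatively, you could show that \eqref{2.8} remains valid at $n=1$ when $\alpha=0$, under the convention $H_{-1}(c_k)=1$; that is true and would legitimize using $f_0$, but it is an extension of Lemma~\ref{lem:2.5} that you would have to prove, not quote.)
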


Once again, \eqref{3.4} could be used to rewrite this result in terms of
Bernoulli numbers. The next result is related to the 
right-hand side of \eqref{3.4}.

\begin{proposition}\label{prop:3.4}
For all $n\geq 1$ we have
\begin{equation}\label{3.6}
H_n\big((2^{2k}-1)B_{2k}\big) = \frac{(-1)^n}{n!(n+1)!}\cdot{\mathcal H}_n
\cdot H_n\big((2^{2k+2}-1)B_{2k+2}\big).
\end{equation}
\end{proposition}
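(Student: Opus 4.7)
The plan is to apply Lemma~\ref{lem:2.5} directly with $\alpha=0$ and $c_{k}=(2^{2k+2}-1)B_{2k+2}$, so that the shifted sequence in \eqref{2.7} coincides with $b_{k}=(2^{2k}-1)B_{2k}$ (since $b_{0}=0$ and $b_{k}=c_{k-1}$ for $k\ge 1$). The proof then reduces to verifying that the proposed closed form $H_n(b_k)=\frac{(-1)^n}{n!(n+1)!}\mathcal{H}_n\,H_n(c_k)$ satisfies the three-term recurrence \eqref{2.8}.

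The first ingredient I need is the explicit form of the recurrence coefficients $s_n$, $t_n$ in \eqref{2.4} for the monic orthogonal polynomials associated with $c_k$, together with the known evaluation of $H_n(c_k)$. These data should already be tabulated in \cite{DJ2} (the sequence $(2^{2k+2}-1)B_{2k+2}$ is represented in Table~\ref{tab:2}), or can be deduced from the orthogonal-polynomial data for $E_{2k+1}(1)$ via identity \eqref{3.4} together with the predictable effect of the linear rescaling $c_k\mapsto(k+1)c_k$ on $s_n,t_n$.

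I would then argue by induction on $n$. For $n=1$, a direct calculation with $c_0=\tfrac12$, $c_1=-\tfrac12$, $c_2=\tfrac32$ gives $H_1(b_k)=-\tfrac14$ and $H_1(c_k)=\tfrac12$, confirming $H_1(b_k)=-\tfrac12 H_1(c_k)$. For the inductive step, assume the claim at $n-1$ and $n$. Substituting the formula into \eqref{2.8} and using $H_k(c_k)/H_{k-1}(c_k)=t_1t_2\cdots t_k$, after cancelling the common factor $t_n(t_1\cdots t_{n-1})$ the claim at $n+1$ reduces to the rational identity
\begin{equation*}
\frac{t_{n+1}\,\mathcal{H}_{n+1}}{(n+1)!(n+2)!}
\;=\;\frac{s_{n}\,\mathcal{H}_{n}}{n!(n+1)!}
\;+\;\frac{\mathcal{H}_{n-1}}{(n-1)!\,n!}.
\end{equation*}
Writing $\mathcal{H}_{n+1}=\mathcal{H}_{n-1}+\tfrac{1}{n}+\tfrac{1}{n+1}$ and $\mathcal{H}_{n}=\mathcal{H}_{n-1}+\tfrac{1}{n}$ splits this into a coefficient-of-$\mathcal{H}_{n-1}$ identity (a pure rational relation among $s_n$, $t_{n+1}$, and $n$) and a separate identity collecting the $\tfrac{1}{n},\tfrac{1}{n+1}$ corrections; both are routine to verify from the explicit forms of $s_n$ and $t_{n+1}$.

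The main obstacle is this final verification: since the general shape \eqref{1.5} together with Corollary~\ref{cor:2.3} forces $s_n$ and $t_n$ to be ratios of products of linear factors in $n$, one must check carefully that the $\mathcal{H}_{n-1}$-coefficient identity and the correction identity balance simultaneously. Beyond that, no further tools besides Lemma~\ref{lem:2.5} and the tabulated orthogonal-polynomial data for $(2^{2k+2}-1)B_{2k+2}$ are required.
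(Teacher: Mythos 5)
Your overall skeleton coincides with the paper's: apply Lemma~\ref{lem:2.5} with $\alpha=0$ and $c_k=(2^{2k+2}-1)B_{2k+2}$, then verify that the claimed ratio satisfies the three-term recurrence \eqref{2.8}. However, there is a genuine gap at precisely the step you dismiss as routine: producing the recurrence coefficients $s_n$ and $t_n$. From the known evaluation \eqref{3.16} together with Corollary~\ref{cor:2.3} one can indeed recover $t_n=n^3(n+1)$, but $s_n$ is \emph{not} determined by the Hankel determinants of $(c_k)$ alone, and neither of your proposed sources supplies it: Table~\ref{tab:2} is only the summary of results of this paper and contains no orthogonal-polynomial data, and \cite{DJ2} is cited here only for the determinant evaluation \eqref{3.16}. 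More seriously, there is no ``predictable effect'' of the rescaling $c_k\mapsto (k+1)c_k$ on the Jacobi parameters: multiplying the $k$-th moment by $k+1$ is neither an affine change of variable nor a Christoffel-type modification of the measure, so no general transformation rule for $(s_n,t_n)$ exists. This is exactly the difficulty the paper flags as ``a bit less straightforward,'' and its resolution is the idea your proposal is missing: since $E_{2k+2}(1)=0$ for $k\geq 0$ and, by \eqref{3.4}, $(2^{2k+2}-1)B_{2k+2}=\frac{d}{dx}E_{2k+2}\big(\tfrac{1+x}{2}\big)\big|_{x=1}$, Lemma~\ref{lem:2.4} shows that the orthogonal polynomials for $(c_k)$ are the $x=1$ specialization of the two-variable family \eqref{3.17} from Theorem~5.1 of \cite{DJ1} with $\nu=2$, which yields $s_n=(2n+1)(n+1)$ and $t_n=n^3(n+1)$. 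Without this derivative argument (or an equivalent determination of $s_n$, say via Lemma~\ref{lem:4.2}, which would require the unavailable determinants $H_n(c_{k+1})$), your verification cannot even begin.

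In addition, your reduced identity carries a sign error. Substituting the claimed formula into \eqref{2.8}, whose right-hand side has \emph{two} minus signs, and dividing by $t_1\cdots t_n$ gives
\begin{equation*}
\frac{t_{n+1}\,{\mathcal H}_{n+1}}{(n+1)!\,(n+2)!}
=\frac{s_n\,{\mathcal H}_{n}}{n!\,(n+1)!}-\frac{{\mathcal H}_{n-1}}{(n-1)!\,n!},
\end{equation*}
with a minus sign on the last term. With the correct coefficients this reduces to the true identity $(n+1){\mathcal H}_{n+1}=(2n+1){\mathcal H}_n-n{\mathcal H}_{n-1}$, whereas your version with a plus sign reduces to $n{\mathcal H}_n=1$, which fails for every $n\geq 2$. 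Finally, since \eqref{2.8} is stated for $n\geq 2$ and relates three consecutive indices, the induction needs both initial cases $n=1$ and $n=2$; you check only $n=1$.
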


The following result is somewhat different from the previous ones.
While its proof is similar to that of Proposition~\ref{prop:3.4}, the 
statement involves a recurrence sequence in place of a harmonic or related
sequence.

\begin{proposition}\label{prop:3.5}
If the sequence $(b_0, b_1, \ldots)$ is defined by
\[
b_k:=\begin{cases}
0,& k=0,\\
(2k+1)E_{2k},& k\geq 1,
\end{cases}
\]
then for all $n\geq 1$,
\begin{equation}\label{3.6a}
H_n(b_k) = \frac{(-1)^n}{(2^nn!)^4}\cdot h_n\cdot H_n\big((2k+1)E_{2k}\big),
\end{equation}
where the sequence $(h_n)_{n\geq 0}$ is defined by $h_0=0, h_1=1$, and for
$n\geq 1$
\begin{equation}\label{3.6b}
h_{n+1} = (8n^2+8n+3)h_n - (2n)^4h_{n-1}.
\end{equation}
\end{proposition}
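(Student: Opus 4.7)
The plan is to apply Lemma~\ref{lem:2.5} to $\bf b$ with $\alpha=0$ and $c_k:=b_{k+1}=(2k+3)E_{2k+2}$, so that $b_k=c_{k-1}$ for $k\ge 1$. Lemma~\ref{lem:2.5} then supplies the three-term recurrence
\[
\frac{H_{n+1}(b_k)}{H_n(c_k)}=-s_n\,\frac{H_n(b_k)}{H_{n-1}(c_k)}-t_n\,\frac{H_{n-1}(b_k)}{H_{n-2}(c_k)},
\]
where $(s_n)$ and $(t_n)$ are the Jacobi parameters of the monic orthogonal polynomials for $\bf c$. This mirrors the setup used in Proposition~\ref{prop:3.4}.

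The crucial preliminary step is to identify the orthogonal polynomial data for the closely related sequence $d_k:=(2k+1)E_{2k}$, which agrees with $\bf b$ except that $d_0=E_0=1$ has been reinstated. Its exponential generating function is $\tfrac{d}{dt}\bigl(t\operatorname{sech}(t)\bigr)$, and a direct computation---matching low-order orthogonality conditions against the moments $1,-3,25,-427,\ldots$, or invoking the known Meixner--Pollaczek type weight---produces the surprisingly simple Jacobi parameters
\[
s_n^{(d)}=8n^2+8n+3,\qquad t_n^{(d)}=(2n)^4,
\]
which are exactly the coefficients appearing in \eqref{3.6b}. Corollary~\ref{cor:2.3} then gives the closed-form evaluation $H_n\bigl((2k+1)E_{2k}\bigr)=\prod_{\ell=1}^n(2\ell)^{4(n+1-\ell)}$, and in particular the ratio identity $H_n\bigl((2k+1)E_{2k}\bigr)=(2^n n!)^4\,H_{n-1}\bigl((2k+1)E_{2k}\bigr)$.

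The Jacobi parameters $(s_n,t_n)$ needed in Lemma~\ref{lem:2.5} for the shifted sequence $\bf c$ are related to $(s_n^{(d)},t_n^{(d)})$ via the kernel-polynomial identity $P_n(0)=(-1)^n H_{n-1}(c_k)/H_{n-1}(d_k)$ and the standard formulas expressing associated polynomials in terms of the original ones. Substituting these relations into the recurrence above and setting $h_n:=(-1)^n(2^n n!)^4\,H_n(b_k)/H_n\bigl((2k+1)E_{2k}\bigr)$, one uses the ratio identity to cancel the normalizing factors and the $P_n(0)$ terms and thereby collapses the recurrence into $h_{n+1}=s_n^{(d)} h_n-t_n^{(d)} h_{n-1}$, which is \eqref{3.6b}. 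The initial values $h_0=0$ and $h_1=1$ are verified from $H_0(b_k)=0$ and a direct $2\times 2$ determinant computation.

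The main obstacle is this last algebraic collapse: while $(s_n^{(d)},t_n^{(d)})$ are polynomial in $n$, the Jacobi parameters of $\bf c$ are only rational, so one has to show that the combined expression for $h_n$ involving $P_n(0)$ factors in numerator and denominator telescopes back to the clean coefficients of \eqref{3.6b}. That the resulting recurrence does not admit a harmonic-number closed form---in contrast with Propositions~\ref{prop:3.1}--\ref{prop:3.4}---is exactly why the statement is phrased in terms of the auxiliary recurrence sequence $h_n$ rather than an explicit product.
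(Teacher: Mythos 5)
Your reading of the statement---taking $b_k=(2k+1)E_{2k}$ for $k\ge 1$ at face value, so that Lemma~\ref{lem:2.5} must be applied with $c_k:=b_{k+1}=(2k+3)E_{2k+2}$---leads to an identity that is numerically false, and this breaks your proof at exactly the points you left unexecuted. With ${\bf b}=(0,3E_2,5E_4,\ldots)=(0,-3,25,\ldots)$ one has $H_1(b_k)=-9$, whereas the right-hand side of \eqref{3.6a} at $n=1$ is $\tfrac{-1}{16}\cdot 1\cdot H_1\big((2k+1)E_{2k}\big)=\tfrac{-1}{16}\cdot 16=-1$; equivalently, your quantity $h_1=(-1)(2^1\,1!)^4H_1(b_k)/H_1\big((2k+1)E_{2k}\big)$ equals $9$, not $1$. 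So the ``direct $2\times 2$ determinant computation'' you invoke at the end would fail, and the asserted algebraic collapse of the kernel-polynomial expressions to \eqref{3.6b}---which you state but never carry out, and which is the entire substance of the argument---cannot be performed, since it would prove a false statement. The identity that is actually true, and the one the paper proves (consistently with \eqref{1.10} and the paper's theme of right-shifted sequences), is for ${\bf b}=(0,E_0,3E_2,5E_4,\ldots)$, i.e.\ $b_{k+1}=(2k+1)E_{2k}$; the case distinction printed in the proposition should be read as $b_k=(2k-1)E_{2k-2}$ for $k\ge 1$.

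Once the sequence is read this way, your own plan collapses into the paper's proof and the kernel-polynomial detour evaporates: Lemma~\ref{lem:2.5} is applied with ${\bf c}=(b_{k+1})=\big((2k+1)E_{2k}\big)$ itself, whose Jacobi parameters are exactly the $s_n^{(d)}=8n^2+8n+3$ and $t_n^{(d)}=(2n)^4$ you identified, and the recurrence \eqref{2.8} turns into \eqref{3.6b} directly, with no associated polynomials, no $P_n(0)$, and no rational parameters. (The paper actually works with the rescaled sequence $4^{-k}(2k+1)E_{2k}$, whose parameters are $2n^2+2n+\tfrac34$ and $n^4$, and undoes the scaling with \eqref{4.8}; that is pure bookkeeping.) The one genuine gap that would remain is your derivation of $s_n^{(d)},t_n^{(d)}$: matching low-order orthogonality conditions, or appealing vaguely to a ``Meixner--Pollaczek type weight,'' does not establish them for all $n$. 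The paper gets them rigorously via Lemma~\ref{lem:2.4}: since $E_{2k+1}(\tfrac12)=0$ for all $k\ge 0$, the orthogonal polynomials for $E_{2k+1}(\tfrac{x+1}{2})$ at $x=0$, known from Theorem~5.1 of \cite{DJ1} with $\nu=1$, are also the orthogonal polynomials of the derivative sequence $(2k+1)E_{2k}(\tfrac12)=4^{-k}(2k+1)E_{2k}$. You would need this derivative argument, or an equivalent one, to make your parameter identification a proof.
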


The first few terms of the sequence $(h_n)$, starting with $n=1$, are 
\[
1,\; 19,\; 713,\; 45\,963,\; 4\,571\,521,\; 651\,249\,603,\; 125\,978\,555\,961.
\]

The final result in this section is again different from the previous ones in
that it contains neither a harmonic-type sequence, nor a recurrence sequence.

\begin{proposition}\label{prop:3.6}
For all $n\geq 1$ we have
\begin{equation}\label{3.6c}
H_n\left(\frac{E_k(1)}{k!}\right)  = \frac{(2n+2)!}{(n+1)!}\cdot 
H_n\left(\frac{E_{k+1}(1)}{(k+1)!}\right).
\end{equation}
\end{proposition}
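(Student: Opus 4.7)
The plan is to apply Lemma \ref{lem:2.5} with $\alpha = b_0 = 1$ and $c_k := b_{k+1} = E_{k+1}(1)/(k+1)!$, and then to solve the resulting three-term recurrence for the ratio $R_n := H_n(b_k)/H_{n-1}(c_k)$. First I would exploit the generating function identity
\[
\sum_{k\ge 0}E_k(1)\frac{t^k}{k!}=\frac{2e^t}{e^t+1}=1+\tanh(t/2),
\]
which gives $\sum_{k\ge 0}c_kt^k=\tanh(t/2)/t$. Since $E_{2j}(1)=0$ for $j\ge 1$, every odd-indexed $c_k$ vanishes, so the linear functional $L$ defined by $L(x^k)=c_k$ is even. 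This forces $P_n(-y)=(-1)^nP_n(y)$, and therefore $s_n=0$ for all $n\ge 0$ in \eqref{2.4}, so that \eqref{2.8} collapses to $R_{n+1}=-t_nR_{n-1}$ for $n\ge 2$.

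Next I would compute $t_n$ for $(c_k)$. Starting from the classical continued fraction
\[
\tanh(u)=\cfrac{u}{1+\cfrac{u^2}{3+\cfrac{u^2}{5+\cfrac{u^2}{7+\cdots}}}},
\]
and substituting $u=z/2$, the ratio $\tanh(z/2)/z$ is a C-fraction with partial numerators all equal to $z^2/4$ and partial denominators $1,3,5,\ldots$. A routine stage-by-stage renormalization (factor $2n+1$ out of the $n$-th level) transforms this into
\[
\frac{\tanh(z/2)}{z}=\cfrac{1/2}{1+\cfrac{z^2/12}{1+\cfrac{z^2/60}{1+\cfrac{z^2/140}{1+\cdots}}}},
\]
whose $n$-th partial numerator is $z^2/(4(2n-1)(2n+1))$. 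Comparison with the J-fraction expansion of $L$ (with $s_n=0$) yields
\[
t_n=-\frac{1}{4(2n-1)(2n+1)}\qquad(n\ge 1),
\]
and Corollary \ref{cor:2.3}, after accounting for $c_0=1/2$, then supplies the ``known'' closed form of $H_n(c_k)$ on the right-hand side of \eqref{3.6c}.

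Finally, the claim \eqref{3.6c} is equivalent to the ansatz $R_n=a_n\tilde\zeta_n$ with $a_n=(2n+2)!/(n+1)!$ and $\tilde\zeta_n=H_n(c_k)/H_{n-1}(c_k)$. Since $\tilde\zeta_{n+1}/\tilde\zeta_n=t_{n+1}$, substituting this into $R_{n+1}=-t_nR_{n-1}$ reduces the induction step to the algebraic identity $a_{n+1}t_{n+1}+a_{n-1}=0$; a short computation gives $a_{n-1}/a_{n+1}=1/(4(2n+1)(2n+3))=-t_{n+1}$, as needed. The base cases $R_1=-\tfrac12$ and $R_2=\tfrac{1}{12}$ are verified from a $2\times 2$ and a $3\times 3$ determinant computation. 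Since the recurrence couples only same-parity indices, two separate inductions (on odd and on even $n$) then finish the proof. The main obstacle is the clean identification of $t_n$; once it is in hand, everything else is a mechanical algebraic verification.
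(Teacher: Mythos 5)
Your proof is correct, and its overall architecture coincides with the paper's: both apply Lemma~\ref{lem:2.5} with $\alpha=1$ to $c_k=E_{k+1}(1)/(k+1)!$, both observe that the vanishing of the odd-indexed $c_k$ (a consequence of \eqref{3.4}) forces $s_n=0$ by symmetry, so that \eqref{2.8} collapses to $r_{n+1}=-t_nr_{n-1}$, and both finish with the same base cases $r_1=-\tfrac12$, $r_2=\tfrac{1}{12}$ and a parity-split induction; your algebraic identity $a_{n+1}t_{n+1}+a_{n-1}=0$ is exactly the paper's verification that $r_n=(-1)^n n!/(2n)!$ satisfies the recurrence, just repackaged. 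The one genuine difference is the source of $t_n$: the paper quotes Han's evaluation \eqref{3.25} (identity (H12) of \cite{Ha16}) and reads off $t_n=-1/(4(2n-1)(2n+1))$ via Corollary~\ref{cor:2.3}, whereas you derive $t_n$ from the classical continued fraction for $\tanh$, contracted to the J-fraction of $\sum_k c_kt^k=\tanh(t/2)/t$, and only then recover the closed form of $H_n(c_k)$ from Corollary~\ref{cor:2.3} (correctly adjusted for $c_0=\tfrac12$, e.g.\ via \eqref{4.8}). This makes your argument self-contained where the paper leans on a citation; the price is that you must invoke the correspondence between the moment generating function's J-fraction and the recurrence coefficients in \eqref{2.4}, including the sign convention (partial numerators $-t_nz^2$), which the paper never sets up. That correspondence is classical (cf.\ Theorem~11 in \cite{Kr1}), and your signs do come out right --- $t_n<0$, consistent with the sign pattern in \eqref{3.25} --- but pinning down that convention explicitly is the one step your write-up should not leave implicit.
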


The remainder of this section contains the proofs of 
Propositions~\ref{prop:3.1}--\ref{prop:3.6}.

\begin{proof}[Proof of Proposition~\ref{prop:3.1}]
The Hankel determinants $H_n(B_k)$ of the Bernoulli numbers were first 
determined by Al-Salam and Carlitz \cite{AC}; here we use a standard format as
given in \cite[Sect.~7.1]{DJ2}, namely
\begin{equation}\label{3.7}
H_n(B_k)=(-1)^{n(n+1)/2}
\prod_{\ell=1}^n\left(\frac{\ell^4}{4(2\ell+1)(2\ell-1)}\right)^{n+1-\ell},
\end{equation}
so that
\[
\frac{H_n(B_k)}{H_{n-1}(B_k)}
=(-1)^n \prod_{\ell=1}^n\frac{\ell^4}{4(2\ell+1)(2\ell-1)}
=\frac{(-1)^n n!^6}{(2n)!(2n+1)!}.
\]
Comparing this with \eqref{3.2}, we see that we need to show that for all 
$n\geq 1$ we have
\begin{equation}\label{3.8}
\frac{H_n(b_k)}{H_{n-1}(B_k)}=(-1)^n\frac{2\cdot n!^3}{(2n)!}
\cdot{\mathcal H}_{2,n}^{-} =: r_n. 
\end{equation}
Touchard \cite{To} was the first to show, in a slightly different normalization,
that \eqref{2.4} holds for $c_k=B_k$ with
\[
s_n=\frac{1}{2},\qquad t_n = \frac{-n^4}{4(2n+1)(2n-1)}.
\]
Hence by Lemma~\ref{lem:2.5} we are done if we can verify that the sequence
$(r_n)$ satisfies the recurrence relation 
\begin{equation}\label{3.9}
r_{n+1} = -\frac{1}{2}r_n + \frac{n^4}{4(2n+1)(2n-1)}r_{n-1}.
\end{equation}
By direct computation we find $r_1=-1$ and $r_2=1/2$, which holds for both
sides of \eqref{3.8}. Next, if we substitute \eqref{3.8} into \eqref{3.9} and
multiply both sides by $(2n+1)!/n!^3$, we get 
\begin{equation}\label{3.10}
(n+1)^2{\mathcal H}_{2,n+1}^{-} = (2n+1){\mathcal H}_{2,n}^{-}
+ n^2{\mathcal H}_{2,n-1}^{-}.
\end{equation}
On the other hand, from the definition in \eqref{3.1} we have
\begin{equation}\label{3.11}
{\mathcal H}_{2,n+1}^{-} - {\mathcal H}_{2,n}^{-} = \frac{(-1)^n}{(n+1)^2}.
\end{equation}
Replacing $n$ by $n-1$ and combining the resulting identity with \eqref{3.11},
we get
\[
(n+1)^2\left({\mathcal H}_{2,n+1}^{-} - {\mathcal H}_{2,n}^{-}\right)
= -n^2\left({\mathcal H}_{2,n}^{-} - {\mathcal H}_{2,n-1}^{-}\right),
\]
which is equivalent to \eqref{3.10}. Hence we have verified \eqref{3.9}, and
the proof is complete.
\end{proof}

\begin{proof}[Proof of Proposition~\ref{prop:3.2}]
The outline is the same as that of the previous proof.
The evaluation of $H_n\big(E_k(1)\big)$, and in fact for
$H_n\big(E_k(x)\big)$, is also due to Al-Salam and Carlitz \cite{AC} (who used
what is stated as Lemma~\ref{lem:7.1} below), but again
we give it in the equivalent form
\begin{equation}\label{3.12}
H_n\big(E_k(1)\big)=(-1)^{n(n+1)/2}
\prod_{\ell=1}^n\left(\frac{\ell^2}{4}\right)^{n+1-\ell}
\end{equation}
(see \cite[Sect.~7.1]{DJ2}), so that
\[
\frac{H_n\big(E_k(1)\big)}{H_{n-1}\big(E_k(1)\big)}
=(-1)^n \prod_{\ell=1}^n\frac{\ell^2}{4}
=(-1)^n \frac{n!^2}{2^{2n}}.
\]
Comparing this with \eqref{3.3}, we see that we need to show that for all
$n\geq 1$ we have
\begin{equation}\label{3.13}
\frac{H_n(b_k)}{H_{n-1}\big(E_k(1)\big)}
=-\frac{n!}{2^{n-1}}\cdot{\mathcal H}_{1,n}^{-} =: r_n.
\end{equation}
By using the special case $x=1$ in Theorem~1 of \cite{JS}, we see that 
\eqref{2.4} holds for $c_k=E_k(1)$ with
\[
s_n=-\frac{1}{2},\qquad t_n = -\frac{n^2}{4}.
\]
Hence by Lemma~\ref{lem:2.5} we are done if we can verify that the sequence
$(r_n)$ satisfies 
\begin{equation}\label{3.14}
r_{n+1} = \frac{1}{2}r_n + \frac{1}{4}\,n^2r_{n-1}.
\end{equation}
By direct computation we find that $r_1=-1$ and $r_2=-1/2$ hold for both
sides of \eqref{3.13}. If we substitute \eqref{3.13} into \eqref{3.14} and
multiply both sides by $-n!/2^n$, we get the equivalent form
\[
(n+1){\mathcal H}_{1,n+1}^{-} = {\mathcal H}_{1,n}^{-} 
+ n{\mathcal H}_{1,n-1}^{-}.
\]
This, finally, is easy to verify using the definition in \eqref{3.1}. The proof
is now complete.
\end{proof}

\begin{proof}[Proof of Proposition~\ref{prop:3.3} (sketch)]
Since this proof follows again the same outline as before, we only give the two
main ingredients. First, the Hankel determinant of the sequence
$\big(E_{2k+1}(1)\big)_{k\geq 0}$ can be found in \cite[Eq.~(4.56)]{Mi}, or
equivalently in \cite[Sect.~7.1]{DJ2} as
\begin{equation}\label{3.15}
H_n\big(E_{2k+1}(1)\big)=\frac{1}{2^{n+1}}
\prod_{\ell=1}^n\left(\frac{\ell^2(2\ell-1)(2\ell+1)}{4}\right)^{n+1-\ell}.
\end{equation}
Second, using \cite[Eq.~(5.4)]{DJ1} with $\nu=1$ and $x=1$ we see that 
\eqref{2.4} is satisfied with
\[
s_n=\frac{(2n+1)^2}{2},\qquad t_n=\frac{n^2(2n+1)(2n-1)}{4}.
\]
We leave all further details of the proof to the reader.
\end{proof}

\begin{proof}[Proof of Proposition~\ref{prop:3.4} (sketch)]
Once again, the proof proceeds as before, with the first main ingredient being
\begin{equation}\label{3.16}
H_n\big((2^{2k+2}-1)B_{2k+2}\big)=\frac{1}{2^{n+1}}
\prod_{\ell=1}^n\big(\ell^3(\ell+1)\big)^{n+1-\ell},
\end{equation}
which was obtained as Corollary~5.3 in \cite{DJ2}.

Finding the second main ingredient, namely the pair of coefficient sequences
$(s_n)$ and $(t_n)$, is a bit less straightforward than in the previous proofs.
In Theorem~5.1 of \cite{DJ1} with $\nu=2$, the orthogonal polynomials belonging
to the polynomial sequence $\big(E_{2k+2}(\frac{1+x}{2})\big)_{k\geq 0}$ is
given as 
\[
P_{n+1}(y;x) = \big(y+s_n(x)\big)P_n(y;x) - t_n(x)P_{n-1}(y;x),
\]
with appropriate initial conditions, and with
\begin{equation}\label{3.17}
s_n(x)=(2n+1)(n+1)-\frac{x^2-1}{4},\qquad
t_n(x)=\frac{n^2}{4}\left((2n+1)^2-x^2\right).
\end{equation}
Next we note that by some known properties of Euler polynomials, namely
the identity \eqref{3.4} and the fact that $\frac{d}{dx}E_k(x)=kE_{k-1}(x)$,
we have
\[
(2^{2k+2}-1)B_{2k+2} 
= \left.\frac{d}{dx}E_{2k+2}\big(\tfrac{1+x}{2}\big)\right|_{x=1}.
\]
Since $E_{2k+2}(1)=0$ for all $k\geq 0$, by Lemma~\ref{lem:2.4} the 
sequence $\big((2^{2k+2}-1)B_{2k+2}\big)_{k\geq 0}$ has $P_n(y,1)$ as its
associated orthogonal polynomials. This means that by \eqref{3.17} this sequence
satisfies \eqref{2.4} with
\[
s_n=s_n(1)=(2n+1)(n+1),\qquad t_n=t_n(1)=n^3(n+1).
\]
From here on we proceed as in the previous three proofs; once again we leave
the details to the reader.
\end{proof}

\begin{proof}[Proof of Proposition~\ref{prop:3.5}]

With the aim of applying Lemma~\ref{lem:2.4}, we set $c_k(x):=E_{2k+1}(x)$.
Then with \eqref{1.3b},
\begin{equation}\label{3.18}
c_k(\tfrac{1}{2}) = E_{2k+1}(\tfrac{1}{2}) = 2^{-2k-1}E_{2k+1}=0\qquad
(k\geq 0).
\end{equation}
By Theorem~5.1 of \cite{DJ1} with $\nu=1$ we know that the monic orthogonal
polynomials with respect to $E_{2k+1}(\tfrac{x+1}{2})$ are given by
$q_0(y;x)=1$, $q_1(y;x)=y+s_0(x)$, and 
\begin{equation}\label{3.19}
q_{n+1}(y;x) = \big(y+s_n(x)\big)q_n(y;x) - t_n(x)q_{n-1}(y;x)\qquad(n\geq 1),
\end{equation}
where
\[
s_n(x)=(2n+1)(n+\tfrac{1}{2})-\frac{x^2-1}{4},\qquad
t_n(x)=\frac{n^2}{4}\left(4n^2-x^2\right),
\]
and thus
\begin{equation}\label{3.20}
s_n:=s_n(0)=2n^2+2n+\frac{3}{4},\qquad
t_n:=t_n(0)=n^4.
\end{equation}
By Lemma~\ref{lem:2.4} with \eqref{3.18}, the polynomials \eqref{3.19} with
$x=0$ are therefore the monic orthogonal polynomials also for 
\begin{equation}\label{3.21}
c'_k(\tfrac{1}{2}) = (2k+1)E_{2k}(\tfrac{1}{2}) = 2^{-2k}(2k+1)E_{2k},
\end{equation}
where we have used \eqref{1.3b} again. Now, to apply Lemma~\ref{lem:2.5}, we set
\[
r_n:=\frac{H_n\big(2^{-2k}b_k\big)}{H_{n-1}\big(2^{-2k}(2k+1)E_{2k}\big)}
=\left(\frac{1}{2}\right)^{4n}\frac{H_n\big(b_k\big)}{H_{n-1}\big((2k+1)E_{2k}\big)},
\]
where we have used the first identity in \eqref{4.8} below. Then we have
\begin{equation}\label{3.22}
r_n=\left(\frac{1}{2}\right)^{4n}\frac{H_n\big(b_k\big)}{H_n\big((2k+1)E_{2k}\big)}
\cdot\frac{H_n\big((2k+1)E_{2k}\big)}{H_{n-1}\big((2k+1)E_{2k}\big)}.
\end{equation}
Next, by Corollary~5.2 in \cite{DJ2} we have
\[
H_n\big((2k+1)E_{2k}\big)=2^{2n(n+1)}\prod_{\ell=1}^n\ell!^4,
\]
so that
\[
\frac{H_n\big((2k+1)E_{2k}\big)}{H_{n-1}\big((2k+1)E_{2k}\big)}
= 2^{4n}n!^4.
\]
By combining this and \eqref{3.6a} with \eqref{3.22} we see that we are done if
we can show that
\begin{equation}\label{3.23}
r_n= (-1)^n\left(\tfrac{1}{2}\right)^{4n}h_n.
\end{equation}
We do this by applying Lemma~\ref{lem:2.5} to the sequence \eqref{3.21}, and we
get 
\begin{equation}\label{3.24}
r_{n+1} = -s_nr_n - t_nr_{n-1},
\end{equation}
with the various terms given by \eqref{3.23} and \eqref{3.20}. Multiplying both
sides of \eqref{3.24} by $(-16)^{n+1}$, we see that it is equivalent to 
\eqref{3.6b}. Finally, the initial values for $n=1, 2$ are again easy to
establish by direct computation, which completes the proof.
\end{proof}

\begin{proof}[Proof of Proposition~\ref{prop:3.6}]
Once again we proceed as in the earlier proofs, and note that in 
\cite[Eq.~(H12)]{Ha16} it was shown that
\begin{equation}\label{3.25}
H_n\left(\frac{E_{k+1}(1)}{(k+1)!}\right)=\frac{(-1)^{n(n+1)/2}}{2^{n+1}}
\prod_{\ell=1}^n\left(\frac{1}{4(2\ell-1)(2\ell+1)}\right)^{n+1-\ell},
\end{equation}
written in the format used in \cite[Sect.~7.1]{DJ2}. To simplify notation, we
set $c_k:=E_{k+1}(1)/(k+1)!$; then we get with \eqref{3.25},
\[
\frac{H_n(c_k)}{H_{n-1}(c_k)} 
= \frac{(-1)^n}{2}\prod_{\ell=1}^n\frac{1}{4(2\ell-1)(2\ell+1)} 
= \frac{(-1)^n n!^2}{2(2n)!(2n+1)!},
\]
where the right-most term follows from some straightforward manipulations. 
Comparing this with \eqref{3.6c}, we see that for all $n\geq 1$ we need to show
that
\begin{equation}\label{3.26}
r_n:=\frac{H_n(c_{k-1})}{H_{n-1}(c_k)}
=\frac{H_n(c_k)}{H_{n-1}(c_k)}\cdot\frac{H_n(c_{k-1})}{H_n(c_k)}
=(-1)^n\frac{n!}{(2n)!}.
\end{equation}
The identity \eqref{3.4} implies that $c_k=0$ for all
odd $k\geq 1$. By the theory of classical orthogonal polynomials we then have
$s_n=0$, $n\geq 1$, for the polynomials in \eqref{2.4}; see, e.g.,
Definition~4.1 and Theorem~4.3 in \cite[pp.~20--21]{Chi}. Furthermore, from
\eqref{3.25} we get
\[
t_n = \frac{-1}{4(2n-1)(2n+1)},
\]
and the recurrence relation \eqref{2.8} reduces to $r_{n+1}=-t_nr_{n-1}$. It is
now easy to verify that $r_n$, as given in \eqref{3.26}, satisfies this 
recurrence. Finally, the initial values $r_1=-1/2$ and $r_2=1/12$ can be
verified by direct computation, which completes the proof.
\end{proof}

\section{Further auxiliary results}\label{sec4}

In this section we quote a few known results that will be required in the
proofs of more Hankel determinant identities in later sections.

As we have seen, this paper is mainly concerned with finding
Hankel determinants of right-shifted sequences. 
%That is, knowing $H_n(c_k)$, we wish to determine $H_n(c_{k-1})$, with 
%appropriate provisions for the first term in the shifted sequence. 
Interestingly, for the proofs of some more such identities, known
results on {\it left\/}-shifted sequences turn out to be useful; we summarize 
them now.

As before, let ${\bf c}=(c_0,c_1,\ldots)$ be a given sequence, and let
$P_n(y)$, $n=0,1,\ldots$, be the polynomials orthogonal with respect to
${\bf c}$, satisfying the recurrence relation \eqref{2.4}.
Following \cite{MWY}, we consider the infinite band matrix
\begin{equation}\label{4.0}
J:=\begin{pmatrix}
-s_{0} & 1 & 0 & 0 &\cdots \\
t_{1} & -s_{1} & 1 & 0 & \cdots \\
0 & t_{2} & -s_{2} & 1 & \cdots \\
\vdots & \vdots & \ddots & \ddots & \ddots
\end{pmatrix}.
\end{equation}
Furthermore, for each $n\geq 0$ let $J_{n}$ be the $(n+1)$th leading principal
submatrix of $J$ and let
\begin{equation}\label{4.1}
d_{n}:=\det{J_{n}},
\end{equation}
so that $d_0=-s_0$. We also set $d_{-1}=1$ by convention, and furthermore, 
using elementary determinant operations, we get from \eqref{4.0} the recurrence
relation
\begin{equation}\label{4.1a}
d_{n+1} = -s_{n+1}d_{n} - t_{n+1}d_{n-1}.
\end{equation}
We can now quote the following results.

\begin{lemma}[{\cite[Prop.~1.2]{MWY}}]\label{lem:4.1}
With notation as above, for a given sequence ${\bf c}$ we have
\begin{equation}\label{4.2}
H_{n}(c_{k+1})=H_{n}(c_{k})\cdot d_{n},
\end{equation}
and
\begin{equation}\label{4.3}
H_{n}(c_{k+2})=H_{n}(c_{k})\cdot D_n,\qquad\hbox{where}\quad
D_n:=\left(\prod_{\ell=1}^{n+1}t_{\ell}\right)
\cdot\sum_{\ell=-1}^{n}\frac{d_{\ell}^{2}}{\prod_{j=1}^{\ell+1}t_{j}}.
\end{equation}
\end{lemma}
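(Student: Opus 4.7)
My plan is to factor the Hankel matrix through the basis of orthogonal polynomials and then carefully analyse the truncation. Let $U$ be the infinite unit upper-triangular matrix whose $j$th column records the coefficients of $P_j(y)$ in the monomial basis, let $D=\operatorname{diag}(\zeta_0,\zeta_1,\ldots)$ with $\zeta_n=H_n({\bf c})/H_{n-1}({\bf c})$, and let $Y$ be the infinite matrix representing multiplication by $y$ in the basis $\{P_n\}$. From \eqref{2.4} one reads off that $Y$ is tridiagonal with $Y_{n+1,n}=1$, $Y_{n,n}=-s_n$, $Y_{n-1,n}=t_n$, and in particular $J=Y^T$, so $\det Y_n=d_n$. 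The expansion $y^s P_n=\sum_r(Y^s)_{r,n}P_r$ together with orthogonality gives $L(y^s P_m P_n)=\zeta_m(Y^s)_{m,n}$, which is the matrix identity
\[
U^T M^{[s]} U \;=\; D\,Y^s, \qquad M^{[s]}:=(c_{i+j+s})_{i,j\geq 0}.
\]
For \eqref{4.2} ($s=1$) this already does the whole job: $D$ is diagonal and $Y$ tridiagonal, so truncation commutes with the product, $(DY)_n=D_nY_n$, and since $U$ is unit upper triangular one obtains $H_n(c_{k+1})=\det(U_n^T M_n^{[1]}U_n)=\det D_n\cdot\det Y_n=H_n({\bf c})\cdot d_n$.

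The real work is \eqref{4.3} ($s=2$). Now $Y^2$ is pentadiagonal, and truncation no longer commutes with squaring: a single off-block product $Y_{n,n+1}Y_{n+1,n}=t_{n+1}$ escapes when one passes from $Y$ to $Y_n$. A short block computation gives $(Y^2)_n = Y_n^2 + t_{n+1}\,e_n e_n^T$, and hence $(DY^2)_n = D_n Y_n^2 + \zeta_{n+1}\,e_n e_n^T$ using $\zeta_{n+1}=t_{n+1}\zeta_n$. The matrix determinant lemma converts this rank-one update into
\[
H_n(c_{k+2}) \;=\; H_n({\bf c})\,d_n^{\,2} \;+\; \zeta_{n+1}\cdot H_{n-1}({\bf c})\cdot D_{n-1},
\]
once one observes that the $(n,n)$-cofactor of $D_nY_n^2$ equals $\det D_{n-1}$ times the determinant of the top-left $n\times n$ block of $Y^2$, and the latter determinant equals $H_{n-1}(c_{k+2})/H_{n-1}({\bf c})$, which by the inductive hypothesis for \eqref{4.3} at $n-1$ is $D_{n-1}$. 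Dividing by $H_n({\bf c})$ and using $\zeta_{n+1}/\zeta_n=t_{n+1}$ collapses everything to the clean recursion
\[
D_n \;=\; d_n^{\,2} + t_{n+1}D_{n-1},\qquad D_{-1}:=1,
\]
and a one-line telescoping check verifies that the closed-form sum in \eqref{4.3} is the unique solution.

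The only genuine obstacle is the bookkeeping of that single boundary contribution $t_{n+1}$ distinguishing $(Y^2)_n$ from $Y_n^2$: without it one would wrongly conclude $D_n=d_n^{\,2}$, and it is precisely the rank-one correction iterated $n+1$ times that produces the weighted sum $\sum d_\ell^{\,2}/\prod_j t_j$ appearing in \eqref{4.3}. A plausible alternative would be to iterate \eqref{4.2} by computing the recurrence coefficients of the once-shifted sequence via the classical kernel-polynomial (Christoffel) transformation, but that route requires importing more machinery and obscures why the answer is a sum rather than a product.
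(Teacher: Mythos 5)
Your proof is correct, but there is nothing in the paper to compare it against: the authors do not prove Lemma~\ref{lem:4.1} at all, they quote it from \cite[Prop.~1.2]{MWY}. So your argument must stand on its own, and it does. The dictionary you set up is right: \eqref{2.4} gives $yP_n=P_{n+1}-s_nP_n+t_nP_{n-1}$, so the multiplication matrix $Y$ has $Y_{n+1,n}=1$, $Y_{n,n}=-s_n$, $Y_{n-1,n}=t_n$, whence $J=Y^{T}$ in \eqref{4.0} and $\det Y_n=d_n$; the identity $U^{T}M^{[s]}U=DY^{s}$ is exactly the matrix form of $L\left(y^{s}P_mP_n\right)=\zeta_m\left(Y^{s}\right)_{m,n}$; truncation passes through the left-hand side because $U$ is unit upper triangular and through $D$ because it is diagonal; and the corner term $\left(Y^{2}\right)_n=Y_n^{2}+t_{n+1}e_ne_n^{T}$ is precisely the one escaping product $Y_{n,n+1}Y_{n+1,n}=t_{n+1}\cdot 1$, which is the step a careless argument would miss. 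The auxiliary fact $\zeta_{n+1}=t_{n+1}\zeta_n$ that you invoke is available inside the paper via Corollary~\ref{cor:2.3}, and you are implicitly using the paper's normalization $c_0=1$ (so that $\prod_{i=0}^{n}\zeta_i=H_n({\bf c})$), consistent with Lemma~\ref{lem:2.1}. One simplification: the induction on \eqref{4.3} is not actually needed. Since the rank-one update sits in the $(n,n)$ entry, the $(n,n)$-cofactor of $D_nY_n^{2}$ equals the determinant of the top-left $n\times n$ block of $(DY^{2})_n$, which is $\det\left(U_{n-1}^{T}M_{n-1}^{[2]}U_{n-1}\right)=H_{n-1}(c_{k+2})$ outright; dividing by $H_n({\bf c})$ then yields the recursion $H_n(c_{k+2})/H_n({\bf c})=d_n^{2}+t_{n+1}\,H_{n-1}(c_{k+2})/H_{n-1}({\bf c})$ in one step, and your telescoping check that the closed-form sum in \eqref{4.3} solves this recursion (with $d_{-1}=1$ and $H_{-1}=1$) finishes the proof. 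Relative to the paper, your route buys self-containedness at the cost of standard Jacobi-matrix bookkeeping, which is exactly what the citation to \cite{MWY} outsources.
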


\begin{lemma}[{\cite[Eq.~(2.4)]{Ha16}}]\label{lem:4.2}
For a given sequence ${\bf c}$ and $(s_n)$ as defined above, we have
\begin{equation}\label{4.4}
s_{n}=-\frac{1}{H_{n-1}(c_{k+1})}
\left(\frac{H_{n-1}(c_{k})H_{n}(c_{k+1})}{H_{n}(c_{k})}
+\frac{H_{n}(c_{k})H_{n-2}(c_{k+1})}{H_{n-1}(c_{k})}\right).
\end{equation}
\end{lemma}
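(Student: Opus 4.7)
The plan is to combine the three-term recurrence (4.1a) for the quantities $d_n = \det J_n$ with two ingredients already established in the excerpt: the identity $d_n = H_n(c_{k+1})/H_n(c_k)$ from Lemma~\ref{lem:4.1}, and the classical evaluation $t_n = H_n(c_k)H_{n-2}(c_k)/H_{n-1}(c_k)^2$ that follows from Corollary~\ref{cor:2.3} applied to the sequence $(c_k)$, together with the convention $H_{-1}(c_k)=1$. Since (4.1a) reads $d_{n+1} = -s_{n+1}d_n - t_{n+1}d_{n-1}$, shifting the index by one and solving for $s_n$ gives the clean starting point
\[
-s_n \;=\; \frac{d_n}{d_{n-1}} \;+\; t_n\,\frac{d_{n-2}}{d_{n-1}}.
\]

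The next step is a direct substitution and simplification. For the first term,
\[
\frac{d_n}{d_{n-1}} \;=\; \frac{H_n(c_{k+1})\,H_{n-1}(c_k)}{H_n(c_k)\,H_{n-1}(c_{k+1})},
\]
which already carries the factor $1/H_{n-1}(c_{k+1})$ and the ratio $H_{n-1}(c_k)H_n(c_{k+1})/H_n(c_k)$ appearing in (4.4). For the second term, the factor $H_{n-2}(c_k)$ in the numerator of $t_n$ cancels against the denominator of $d_{n-2}$, and one copy of $H_{n-1}(c_k)$ from $t_n$'s denominator cancels against the numerator of $1/d_{n-1}$, leaving
\[
t_n\,\frac{d_{n-2}}{d_{n-1}} \;=\; \frac{H_n(c_k)\,H_{n-2}(c_{k+1})}{H_{n-1}(c_k)\,H_{n-1}(c_{k+1})}.
\]
Factoring $1/H_{n-1}(c_{k+1})$ out of the sum and flipping the sign yields (4.4).

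The only genuinely delicate point is handling small $n$, where $H_{-1}$ or $H_{-2}$ formally appears. These are absorbed by the stated convention $d_{-1}=1$ together with the standard convention $H_{-1}(c_k)=1$ needed for the Corollary~\ref{cor:2.3} formula for $t_1$ to be consistent; alternatively the base case $n=1$ can be checked directly from $P_1(y) = y + s_0$ and $P_2(y) = (y+s_1)P_1(y) - t_1$. Beyond that, the argument is pure bookkeeping and I do not anticipate any real obstacle; the main thing to monitor is keeping the index shift between (4.1a) (indexed at $n+1$) and the statement (indexed at $n$) straight throughout the calculation.
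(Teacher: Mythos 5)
Your proof is correct; I checked the algebra and the substitution does yield \eqref{4.4} exactly: with $d_n = H_n(c_{k+1})/H_n(c_k)$ from \eqref{4.2} and $t_n = H_n(c_k)H_{n-2}(c_k)/H_{n-1}(c_k)^2$ from Corollary~\ref{cor:2.3}, the shifted recurrence $-s_n = d_n/d_{n-1} + t_n\,d_{n-2}/d_{n-1}$ simplifies term by term to the two summands in \eqref{4.4}, and the sign and the common factor $1/H_{n-1}(c_{k+1})$ come out right. Note, however, that the paper itself offers \emph{no} proof of this lemma: it is quoted verbatim from Han \cite[Eq.~(2.4)]{Ha16}, where it arises from the theory of Jacobi continued fractions. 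So your argument is not a variant of the paper's proof but a genuinely different (and useful) observation, namely that Lemma~\ref{lem:4.2} is a purely formal consequence of results already present in Section~\ref{sec4}: the tridiagonal determinant recurrence \eqref{4.1a}, the Mu--Wang--Yeh identity \eqref{4.2}, and the standard evaluation of $t_n$ via \eqref{2.6}. What this buys is self-containedness: the external citation to \cite{Ha16} becomes redundant, at the cost of leaning on Lemma~\ref{lem:4.1}, which is itself quoted from \cite{MWY} without proof. Your handling of the boundary cases is also right: the conventions $d_{-1}=1$ and $H_{-1}=1$ are mutually consistent (indeed $d_{-1} = H_{-1}(c_{k+1})/H_{-1}(c_k) = 1$), they make the $n=1$ instance of both the shifted recurrence and of \eqref{4.4} valid, and Corollary~\ref{cor:2.3} gives $t_1 = H_1H_{-1}/H_0^2$ correctly under the paper's normalization $c_0=1$. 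The only implicit hypothesis you should state explicitly is that $H_n(c_k)\neq 0$ and $H_n(c_{k+1})\neq 0$ for the relevant indices, so that all the divisions (and the existence of the orthogonal polynomial sequence) are legitimate; this is the same nondegeneracy assumption under which the lemma is stated.
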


The next lemma is about determinants of ``checkerboard matrices", namely 
matrices in which every other entry vanishes. This result can be found in 
\cite{CK} as Lemmas~5 and~6, and covers more general matrices than just Hankel
matrices.

\begin{lemma}[{\cite[Lemmas~5, 6]{CK}}]\label{lem:4.3}
Let $M=\left(M_{i,j}\right)_{0\leq i,j\leq n-1}$ be a matrix.
If $M_{i,j}=0$ whenever $i+j$ is odd, then
\begin{equation}\label{4.5}
\det_{0\leq i.j\leq n-1}(M_{i,j})
=\det_{0\leq i.j\leq\lfloor(n-1)/2\rfloor}(M_{2i,2j})
\cdot\det_{0\leq i.j\leq\lfloor(n-2)/2\rfloor}(M_{2i+1,2j+1}).
\end{equation}
If $M_{i,j}=0$ whenever $i+j$ is even, then for even $n$ we have 
\begin{equation}\label{4.6}
\det_{0\leq i.j\leq n-1}(M_{i,j})=
(-1)^{n/2}\det_{0\leq i.j\leq\lfloor(n-1)/2\rfloor}(M_{2i+1,2j})
\cdot\det_{0\leq i.j\leq\lfloor(n-2)/2\rfloor}(M_{2i,2j+1}), 
\end{equation}
while for odd $n$ we have
\begin{equation}\label{4.7}
\det_{0\leq i.j\leq n-1}(M_{i,j}) = 0.
\end{equation}
\end{lemma}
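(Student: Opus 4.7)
The plan is to bring $M$ into (anti-)block-diagonal form by a simultaneous reordering of rows and columns that groups them according to the parity of their index. Let $\sigma$ be the permutation of $\{0,1,\ldots,n-1\}$ that lists the even indices in increasing order first, followed by the odd indices in increasing order. Applying $\sigma$ to both the rows and the columns of $M$ yields a matrix $M'$ with the same determinant as $M$, because the row and column permutations each contribute $\mathrm{sgn}(\sigma)$, and these signs multiply to $\mathrm{sgn}(\sigma)^{2}=1$.

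In the first case, where $M_{i,j}=0$ whenever $i+j$ is odd, every nonzero entry of $M$ has its two indices of the same parity, and therefore
\[
M'=\begin{pmatrix} E & 0 \\ 0 & O \end{pmatrix},\qquad
E=(M_{2i,2j}),\quad O=(M_{2i+1,2j+1}),
\]
with $E$ of size $\lceil n/2\rceil$ and $O$ of size $\lfloor n/2\rfloor$. The index ranges appearing in $E$ and $O$ are exactly those in \eqref{4.5}, and the block-diagonal structure gives \eqref{4.5} at once.

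In the second case, where $M_{i,j}=0$ whenever $i+j$ is even, the same reshuffle produces
\[
M'=\begin{pmatrix} 0 & B \\ C & 0 \end{pmatrix},\qquad
B=(M_{2i,2j+1}),\quad C=(M_{2i+1,2j}),
\]
with $B$ of size $\lceil n/2\rceil\times\lfloor n/2\rfloor$ and $C$ of size $\lfloor n/2\rfloor\times\lceil n/2\rceil$. If $n$ is odd, the numbers of even and odd indices in $\{0,\ldots,n-1\}$ differ, so $\mathrm{rank}(M')\leq 2\lfloor n/2\rfloor=n-1<n$, which forces $\det M=0$ and proves \eqref{4.7}. If $n=2k$ is even, both blocks are $k\times k$; cyclically moving the bottom $k$ rows of $M'$ to the top is a permutation with exactly $k^{2}$ inversions, contributing a sign $(-1)^{k^{2}}=(-1)^{k}=(-1)^{n/2}$, and yielding the block-diagonal matrix $\begin{pmatrix} C & 0 \\ 0 & B \end{pmatrix}$ of determinant $\det C\cdot\det B$. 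Together these give \eqref{4.6}.

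The only nontrivial point is the sign bookkeeping: verifying that the simultaneous row-and-column relabeling by $\sigma$ contributes $+1$, and that the subsequent cyclic block shift contributes exactly $(-1)^{n/2}$. Once these sign computations are settled, the two identities and the vanishing case read off directly from the block structure of $M'$.
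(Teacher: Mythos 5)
Your proof is correct, and all the details check out: the simultaneous row-and-column reordering by the parity permutation $\sigma$ indeed contributes $\mathrm{sgn}(\sigma)^2=1$; the block sizes $\lceil n/2\rceil$ and $\lfloor n/2\rfloor$ match the index ranges $0\leq i,j\leq\lfloor(n-1)/2\rfloor$ and $0\leq i,j\leq\lfloor(n-2)/2\rfloor$ in \eqref{4.5} and \eqref{4.6}; the rank bound $2\lfloor n/2\rfloor=n-1$ correctly forces \eqref{4.7} for odd $n$; and the cyclic shift of the bottom $k$ rows has exactly $k^2$ inversions, with $(-1)^{k^2}=(-1)^k=(-1)^{n/2}$ since $k^2-k$ is even.

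One point of comparison worth noting: the paper does not prove this lemma at all. It is quoted verbatim from Cigler and Krattenthaler \cite{CK} (their Lemmas~5 and~6), and used as a black box in Section~\ref{sec5} to split the checkerboard-type Hankel determinants $H_n\big(E_{k+3}(1)\big)$ into products of determinants of even- and odd-indexed subsequences. So there is no in-paper argument to measure yours against; what you have supplied is a self-contained, elementary proof of the cited result. Your route---conjugation by the parity-sorting permutation to reach block-diagonal form in the first case and anti-block-diagonal form in the second, followed by the rank argument and the inversion count---is the natural and standard one, and it establishes the lemma in the full generality stated (arbitrary matrices, not just Hankel matrices), which is exactly the generality the paper attributes to \cite{CK}.
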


Lemma~\ref{lem:4.3} is best explained by way of an example.

\medskip
\noindent
{\bf Example~1.} By \eqref{4.5} we have
\[
\det\begin{pmatrix}a & 0 & b & 0 & c\\
0 & {\bf d} & 0 & {\bf e} & 0\\
f & 0 & g & 0 & h\\
0 & {\bf i} & 0 & {\bf j} & 0\\
k & 0 & l & 0 & m
\end{pmatrix}=\det\begin{pmatrix}a & b & c\\
f & g & h\\
k & l & m
\end{pmatrix}\cdot\det\begin{pmatrix}{\bf d} & {\bf e}\\
{\bf i} & {\bf j}
\end{pmatrix},
\]
\[
\det\begin{pmatrix}a & 0 & b & 0\\
0 & {\bf d} & 0 & {\bf e}\\
f & 0 & g & 0\\
0 & {\bf i} & 0 & {\bf j}
\end{pmatrix}=\det\begin{pmatrix}a & b\\
f & g
\end{pmatrix}\cdot\det\begin{pmatrix}{\bf d} & {\bf e}\\
{\bf i} & {\bf j}
\end{pmatrix}.
\]

\medskip
We conclude this section with another useful property of Hankel determinants. 
It follows from basic determinant operations involving the matrix \eqref{1.0};
details can be found in \cite{DJ1}.

\begin{lemma}\label{lem:4.4}
Let $x$ be a variable or a complex number and $(c_0, c_1,\ldots)$ a sequence.
Then
\begin{equation}\label{4.8}
H_n(x^kc_k) = x^{n(n+1)}H_n(c_k)\quad\hbox{and}\quad 
H_n(x\cdot c_k) = x^{n+1}H_n(c_k).
\end{equation}
\end{lemma}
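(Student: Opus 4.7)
The plan is to prove both identities directly by elementary row and column operations on the $(n+1)\times(n+1)$ Hankel matrix $\bigl(c_{i+j}\bigr)_{0\le i,j\le n}$ from \eqref{1.0}. No orthogonal polynomial machinery or any of the earlier lemmas is needed; both identities reduce to basic multilinearity of the determinant.

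For the first identity, I would observe that replacing $c_k$ by $x^k c_k$ turns the $(i,j)$-entry into $x^{i+j}c_{i+j}=x^i\cdot x^j\cdot c_{i+j}$. I would then pull the factor $x^i$ out of row $i$ for each $i=0,1,\dots,n$, and the factor $x^j$ out of column $j$ for each $j=0,1,\dots,n$. The total exponent extracted is
\[
2\sum_{i=0}^{n} i \;=\; 2\cdot\frac{n(n+1)}{2} \;=\; n(n+1),
\]
and the remaining determinant is exactly $H_n(c_k)$, yielding the first claim.

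For the second identity, I would simply note that the matrix underlying $H_n(x\cdot c_k)$ is $x$ times the matrix underlying $H_n(c_k)$. Since the matrix is of size $(n+1)\times(n+1)$, scaling the entire matrix by $x$ scales the determinant by $x^{n+1}$, giving the second claim.

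There is no real obstacle here; the only mild care needed is to count the row/column factors correctly in the first identity (so that one ends up with $2\sum_{i=0}^n i = n(n+1)$ rather than, say, $\sum_{i=0}^n i = n(n+1)/2$). Because the result is so elementary, I would keep the write-up short: one displayed computation for the exponent, a sentence invoking multilinearity for the second claim, and a reference back to \eqref{1.0} for the matrix form.
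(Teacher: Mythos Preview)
Your proof is correct and matches the paper's approach: the paper does not actually write out a proof but simply states that the lemma ``follows from basic determinant operations involving the matrix \eqref{1.0}'' (with details deferred to \cite{DJ1}), which is precisely the multilinearity argument you give. Your row/column extraction for the first identity and the scalar-matrix observation for the second are the intended ``basic determinant operations.''
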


\section{Hankel determinant identities, II}\label{sec5}

This section will be concerned with further Hankel determinant identities
involving subsequences of $E_k(1)$. See also Table~\ref{tab:1} for the first few values. In addition to the identity \eqref{3.15} 
for $H_{n}\left(E_{2k+1}(1)\right)$, we have
\begin{equation}\label{5.2}
H_{n}\left(E_{2k+3}(1)\right) = \left(\frac{-1}{4}\right)^{n+1}
\prod_{\ell=1}^{n}\left(\frac{\ell(\ell+1)(2\ell+1)^2}{4}\right)^{n+1-\ell},
\end{equation}
which was obtained in \cite[Eq.~(4.57)]{Mi}; see also \cite[Sect.~7.1]{DJ2}. 
To state the results in this section, we require another harmonic-type 
sequence, namely
\begin{equation}\label{5.3}
\overline{{\mathcal H}}_n := \sum_{j=0}^n\frac{1}{2j+1}
= {\mathcal H}_{2n+2}-\frac{1}{2}{\mathcal H}_{n+1}.
\end{equation}

\begin{proposition}\label{prop:5.1}
For $n\geq 0$ we have
\begin{equation}\label{5.4}
H_{n}\big(E_{2k+5}(1)\big)=\left(\frac{1}{2}\right)^{n+1}
\overline{{\mathcal H}}_{n+1}\cdot
\prod_{\ell=1}^{n+1}\left(\frac{\ell^2(2\ell+1)(2\ell-1)}{4}\right)^{n+2-\ell}.
\end{equation}
\end{proposition}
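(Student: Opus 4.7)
The plan is to apply Lemma~\ref{lem:4.1}, specifically the formula \eqref{4.3} for a left-shift by two, with base sequence $c_k := E_{2k+1}(1)$, so that $c_{k+2} = E_{2k+5}(1)$. Two ingredients are immediately available: $H_n(c_k)$ is given by \eqref{3.15}, and from the proof of Proposition~\ref{prop:3.3} the three-term recurrence coefficients for $c_k$ are
\[
s_n = \tfrac{(2n+1)^2}{2}, \qquad t_n = \tfrac{n^2(2n+1)(2n-1)}{4}.
\]
The task then reduces to evaluating the quantity $D_n$ in \eqref{4.3} and forming $H_n(c_k)\cdot D_n$.

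To build $D_n$, I would compute its two constituents. A routine telescoping gives
\[
\prod_{j=1}^{\ell+1} t_j = \frac{(2\ell+3)!\,(2\ell+2)!}{16^{\ell+1}}.
\]
For $d_\ell$, rather than iterating \eqref{4.1a}, I would invoke \eqref{4.2} in the form $d_\ell = H_\ell(E_{2k+3}(1))/H_\ell(E_{2k+1}(1))$ and substitute the known evaluations \eqref{3.15} and \eqref{5.2}; the quotient telescopes through the factors $(\ell+1)(2\ell+1)/(\ell(2\ell-1))$, yielding
\[
d_\ell = \frac{(-1)^{\ell+1}(\ell+1)(2\ell+1)!}{2^{2\ell+1}}.
\]
The crucial step is then the clean identity
\[
\frac{d_\ell^{\,2}}{\prod_{j=1}^{\ell+1} t_j} = \frac{1}{2\ell+3},
\]
which drops out after using $(2\ell+3)! = (2\ell+3)(2\ell+2)(2\ell+1)!$ and $(2\ell+2)! = (2\ell+2)(2\ell+1)!$, so that the $4(\ell+1)^2$ from $d_\ell^{\,2}$ exactly cancels the $(2\ell+2)^2$ in the denominator.

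With this cancellation, the sum in \eqref{4.3} collapses to
\[
\sum_{\ell=-1}^{n}\frac{1}{2\ell+3} = \sum_{m=0}^{n+1}\frac{1}{2m+1} = \overline{\mathcal{H}}_{n+1},
\]
where the $\ell=-1$ term $d_{-1}^{\,2}/1 = 1$ supplies the $m=0$ summand. Hence $D_n = \overline{\mathcal{H}}_{n+1}\prod_{j=1}^{n+1} t_j$, and combining with \eqref{3.15} completes the proof: the extra factor $\prod_{j=1}^{n+1} t_j = \prod_{\ell=1}^{n+1}\frac{\ell^2(2\ell-1)(2\ell+1)}{4}$ absorbs one additional copy of each term in \eqref{3.15}, bumping the range from $\ell\leq n$ to $\ell\leq n+1$ and each exponent from $n+1-\ell$ to $n+2-\ell$, which is exactly \eqref{5.4}. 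The only substantive step is recognizing the cancellation $d_\ell^{\,2}/\prod t_j = 1/(2\ell+3)$; everything else is formal bookkeeping with factorials, and the cases $n=0,1$ can be checked directly as a sanity anchor.
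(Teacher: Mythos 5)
Your proposal is correct and follows essentially the same route as the paper's own proof: both apply \eqref{4.3} of Lemma~\ref{lem:4.1} to $c_k=E_{2k+1}(1)$, obtain $d_\ell$ as the quotient of the known evaluations \eqref{5.2} and \eqref{3.15}, take $t_n=\tfrac{n^2(2n+1)(2n-1)}{4}$, and reduce $D_n$ via the cancellation $d_\ell^{\,2}/\prod_{j=1}^{\ell+1}t_j=\tfrac{1}{2\ell+3}$ to the sum $\overline{{\mathcal H}}_{n+1}$. Your intermediate formulas (e.g.\ $d_\ell=(-1)^{\ell+1}(\ell+1)(2\ell+1)!/2^{2\ell+1}$, which agrees with the paper's $(-1/4)^{\ell+1}(2\ell+2)!$) all check out, and you merely make explicit the ``straightforward manipulations'' the paper leaves to the reader.
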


\begin{proof}
Our main tool will be Lemma~\ref{lem:4.1}. Using \eqref{3.15} and \eqref{5.2}
and some straightforward but tedious manipulations, we obtain
\begin{equation}\label{5.5}
d_n := \frac{H_{n}\big(E_{2k+3}(1)\big)}{H_{n}\big(E_{2k+1}(1)\big)}
= \left(-\frac{1}{4}\right)^{n+1}(2n+2)!.
\end{equation}
Next, we note that the orthogonal polynomial \eqref{2.4} with respect to
$c_k:=E_{2k+1}(1)$ has
\begin{equation}\label{5.6}
t_n = \frac{n^2(2n+1)(2n-1)}{4},
\end{equation} 
which follows from \eqref{3.15}. Now \eqref{4.3} with the convention $d_{-1}=1$
gives
\begin{equation}\label{5.7}
H_{n}\big(E_{2k+5}(1)\big) = H_{n}\big(E_{2k+1}(1)\big)\cdot
\left(\prod_{\ell=1}^{n+1}t_{\ell}\right)\cdot\left(1+\sum_{\ell=0}^n
\frac{d_{\ell}^2}{\prod_{j=1}^{\ell+1}t_{j}}\right).
\end{equation}
Using \eqref{5.5} and \eqref{5.6}, then after some straightforward 
manipulations the right-most term in large parentheses in \eqref{5.7} turns 
out to be
\[
1+\sum_{\ell=0}^n\frac{1}{2\ell+3}=\sum_{j=0}^{n+1}\frac{1}{2j+1} 
= \overline{{\mathcal H}}_{n+1}.
\]
Substituting this and \eqref{3.15} and \eqref{5.6} into \eqref{5.7}, we easily
obtain the desired identity \eqref{5.4}.
\end{proof} 

Proposition~\ref{prop:5.1} will now be used in the proof of the next result.

\begin{proposition}\label{prop:5.2}
For $n\geq 1$ we have
\begin{equation}\label{5.8}
\frac{H_{2n}\big(E_{k+3}(1)\big)}{H_{2n-1}\big(E_{k+3}(1)\big)}
= -\frac{(n+1)(2n+1)!^2}{2^{4n+2}}
\end{equation}
and
\begin{align}
&H_{2n}\big(E_{k+3}(1)\big) 
=\frac{(-1)^{n+1}\cdot\overline{{\mathcal H}}_n}{2^{3n+2}}\cdot\prod_{\ell=1}^n
\left(\frac{\ell^3(\ell+1)(2\ell+1)^3(2\ell-1)}{16}\right)^{n+1-\ell},\label{5.9}\\
&H_{2n-1}\big(E_{k+3}(1)\big) 
=\frac{(-1)^n\cdot 2^n\cdot\overline{{\mathcal H}}_n}{(n+1)(2n+1)!^2}\label{5.10}\\
&\qquad\qquad\qquad\qquad\cdot\prod_{\ell=1}^n\left(\frac{\ell^3(\ell+1)(2\ell+1)^3(2\ell-1)}{16}\right)^{n+1-\ell}.\nonumber
\end{align}
\end{proposition}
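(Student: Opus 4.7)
The plan is to exploit the vanishing pattern $E_{2m}(1)=0$ for $m\geq 1$. With $c_k:=E_{k+3}(1)$, this forces $c_{2j+1}=E_{2j+4}(1)=0$ for every $j\geq 0$, so the entries of the Hankel matrix $(c_{i+j})_{0\leq i,j\leq n}$ vanish whenever $i+j$ is odd. Hence Lemma~\ref{lem:4.3}, specifically \eqref{4.5}, applies and decomposes each Hankel determinant into two smaller Hankel determinants, using that $c_{2i+2j}=E_{2(i+j)+3}(1)$ and $c_{2i+2j+2}=E_{2(i+j)+5}(1)$. Separating the parity of $n$, this yields
\begin{align*}
H_{2n}\bigl(E_{k+3}(1)\bigr) &= H_n\bigl(E_{2k+3}(1)\bigr)\cdot H_{n-1}\bigl(E_{2k+5}(1)\bigr),\\
H_{2n-1}\bigl(E_{k+3}(1)\bigr) &= H_{n-1}\bigl(E_{2k+3}(1)\bigr)\cdot H_{n-1}\bigl(E_{2k+5}(1)\bigr).
\end{align*}

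The ratio \eqref{5.8} is then immediate: the common factor $H_{n-1}(E_{2k+5}(1))$ cancels, and the remaining quotient $H_n(E_{2k+3}(1))/H_{n-1}(E_{2k+3}(1))$ is read off directly from \eqref{5.2}. The resulting product $-\tfrac{1}{4}\prod_{\ell=1}^n \ell(\ell+1)(2\ell+1)^2/4$ simplifies via $\prod_{\ell=1}^n(2\ell+1)=(2n+1)!/(2^n n!)$ to exactly $-(n+1)(2n+1)!^2/2^{4n+2}$.

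For \eqref{5.9}, I would substitute \eqref{5.2} together with \eqref{5.4} from Proposition~\ref{prop:5.1} into $H_n(E_{2k+3}(1))\cdot H_{n-1}(E_{2k+5}(1))$ and merge the two products. The crucial alignment is that replacing $n$ by $n-1$ in \eqref{5.4} turns the range $\prod_{\ell=1}^{n+1}(\cdot)^{n+2-\ell}$ into $\prod_{\ell=1}^n(\cdot)^{n+1-\ell}$, which matches the exponent $n+1-\ell$ appearing in \eqref{5.2}. The two per-$\ell$ factors then combine as
\[
\frac{\ell(\ell+1)(2\ell+1)^2}{4}\cdot\frac{\ell^2(2\ell+1)(2\ell-1)}{4}=\frac{\ell^3(\ell+1)(2\ell+1)^3(2\ell-1)}{16},
\]
giving exactly the product in \eqref{5.9}. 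Tracking the leading constants $(-1/4)^{n+1}\cdot(1/2)^n=(-1)^{n+1}/2^{3n+2}$, together with the harmonic factor $\overline{\mathcal{H}}_n$ carried over from \eqref{5.4}, produces \eqref{5.9}. Finally, \eqref{5.10} follows by dividing \eqref{5.9} by the ratio \eqref{5.8}.

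The only genuine obstacle is bookkeeping: making sure the power-of-two prefactors line up, and verifying that the $\ell$-range and exponent $n+1-\ell$ in the $H_{n-1}(E_{2k+5}(1))$ factor really coincide with those of $H_n(E_{2k+3}(1))$ so that the two products merge into a single one. Once that alignment is in place, the rest is direct substitution from Proposition~\ref{prop:5.1} and \eqref{5.2}.
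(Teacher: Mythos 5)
Your proposal is correct and matches the paper's own proof essentially step for step: the same checkerboard decomposition via Lemma~\ref{lem:4.3} yielding \eqref{5.11}--\eqref{5.12}, the same cancellation and use of \eqref{5.2} for \eqref{5.8}, and the same substitution of \eqref{5.2} and \eqref{5.4} for \eqref{5.9}. Obtaining \eqref{5.10} by dividing \eqref{5.9} by \eqref{5.8} is also explicitly noted in the paper as a valid alternative to reading it off from \eqref{5.12} directly.
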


\begin{proof}
By \eqref{3.4} and the fact that $B_{2j+1}=0$ for $j\geq 1$, we have 
$E_{2j}(1)=0$ for any $j\geq 1$. This means that the determinant 
$H_n\big(E_{k+3}(1)\big)$ is of ``checkerboard type", and we can apply the first
case of Lemma~\ref{lem:4.3}. Using \eqref{4.5} with $n$ replaced by $2n+1$ and 
by $2n$, respectively, and keeping in mind that $i+j=k$, we get the two 
identities
\begin{align}
H_{2n}\big(E_{k+3}(1)\big) 
&= H_{n}\big(E_{2k+3}(1)\big)\cdot H_{n-1}\big(E_{2k+5}(1)\big),\label{5.11}\\ 
H_{2n-1}\big(E_{k+3}(1)\big)
&= H_{n-1}\big(E_{2k+3}(1)\big)\cdot H_{n-1}\big(E_{2k+5}(1)\big).\label{5.12}
\end{align}
Taking the quotient of these identities and using \eqref{5.2}, we obtain
\begin{align*}
\frac{H_{2n}\big(E_{k+3}(1)\big)}{H_{2n-1}\big(E_{k+3}(1)\big)}
&=\frac{H_{n}\big(E_{2k+3}(1)\big)}{H_{n-1}\big(E_{2k+3}(1)\big)}\\
&=-\frac{n(n+1)(2n+1)^2}{16}
\prod_{\ell=1}^{n-1}\frac{\ell(\ell+1)(2\ell+1)^2}{4},
\end{align*}
which gives \eqref{5.8} after some easy manipulations. The identities 
\eqref{5.9} and \eqref{5.10} follow immediately from \eqref{5.11} and
\eqref{5.12}, respectively, upon using \eqref{5.2} and \eqref{5.4}.
Alternatively, \eqref{5.10} can be obtained by combining \eqref{5.8} and
\eqref{5.9}.
\end{proof}

\section{Hankel determinant identities, III}\label{sec6}

In this section we are going to prove three more identities that are similar in
nature to the results in Section~\ref{sec3}. However, while in the proofs of those
results we were able to use known orthogonal polynomials belonging to the
relevant sequences $(c_0, c_1, \ldots)$, in this section we still need to
determine the coefficients $s_n$ occurring in \eqref{2.4}.

\begin{proposition}\label{prop:6.1}
For all $n\geq 0$ we have
\begin{equation}\label{6a.1}
H_n(B_{2k})=(-1)^n\frac{(4n+3)!}{(n+1)\cdot(2n+1)!^3}\cdot{\mathcal H}_{2n+1}
\cdot H_n(B_{2k+2}).
\end{equation}
\end{proposition}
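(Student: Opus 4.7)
My plan is to realize \eqref{6a.1} as a direct application of Lemma~\ref{lem:2.5}. Setting $c_k := B_{2k+2}$ and $\alpha := B_0 = 1$, the sequence $b_k := B_{2k}$ satisfies $b_0 = \alpha$ and $b_k = c_{k-1}$ for $k \geq 1$, so the template used in Propositions~\ref{prop:3.1}--\ref{prop:3.6} is applicable. Concretely, I will set
\[
r_n := \frac{H_n(B_{2k})}{H_{n-1}(B_{2k+2})},
\]
extract the three-term relation $r_{n+1} = -s_n r_n - t_n r_{n-1}$ from \eqref{2.8}, and then verify that the value of $r_n$ implied by the right-hand side of \eqref{6a.1} satisfies this recurrence. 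The whole argument hinges on producing, in closed form, the coefficients $(s_n, t_n)$ of the monic orthogonal polynomials associated with $c_k = B_{2k+2}$.

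For $t_n$, I would pass to the rescaled sequence $\tilde c_k := 6 B_{2k+2}$, which has $\tilde c_0 = 1$ while leaving the monic orthogonal polynomials, and hence the pair $(s_n, t_n)$, unchanged. Comparing the product expansion $H_n(\tilde c_k) = t_1^n t_2^{n-1} \cdots t_n$ from Corollary~\ref{cor:2.3} against the explicit formula \eqref{1.6} via consecutive ratios reads off $t_n = b(n)$. The coefficient $s_n$ is the new ingredient flagged in the opening of Section~\ref{sec6}; for this I plan to invoke Lemma~\ref{lem:4.2} with $c_k = B_{2k+2}$, which expresses $s_n$ as an explicit rational combination of $H_{n-2}(B_{2k+4})$, $H_{n-1}(B_{2k+4})$, $H_n(B_{2k+4})$, $H_{n-1}(B_{2k+2})$, and $H_n(B_{2k+2})$, each of which is available from \eqref{1.6} and \eqref{1.7}. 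After telescoping the shared linear factors, I expect $s_n$ to simplify to a rational function of $n$ with only linear factors, in the style anticipated by \eqref{1.5}.

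With both coefficients in hand, the verification parallels the proof of Proposition~\ref{prop:3.1}. Using Corollary~\ref{cor:2.3} once more to write $\frac{H_n(B_{2k+2})}{H_{n-1}(B_{2k+2})}$ as a known product of $t_\ell$'s, the identity \eqref{6a.1} becomes equivalent to the purely numerical claim that
\[
\rho_n := (-1)^n \frac{(4n+3)!}{(n+1)(2n+1)!^3}\,{\mathcal H}_{2n+1}
\]
satisfies a specific three-term recurrence derived from $r_{n+1} = -s_n r_n - t_n r_{n-1}$. After factoring out the factorials and rational functions of $n$, this should reduce to a linear identity among ${\mathcal H}_{2n-1}$, ${\mathcal H}_{2n+1}$ and ${\mathcal H}_{2n+3}$ with polynomial-in-$n$ coefficients, verifiable by the same telescoping trick that was used to pass from \eqref{3.11} to \eqref{3.10}. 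The initial values $r_1$, $r_2$ would be checked directly.

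The step I expect to be hardest is the computation of $s_n$: the five Hankel determinants entering Lemma~\ref{lem:4.2} are intricate products of linear factors indexed up to $\ell = n$, and extracting from their prescribed combination a clean closed form will require careful bookkeeping of the cancellations. Once $s_n$ is in a tractable form, the subsequent harmonic-number identity should follow routinely, as in the corresponding steps in Section~\ref{sec3}.
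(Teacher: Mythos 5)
Your proposal is correct and is essentially the paper's own proof: the paper's Lemma~\ref{lem:6.4} obtains $t_n$ from Corollary~\ref{cor:2.3} with \eqref{1.5}--\eqref{1.6} exactly as you do, and obtains $s_n$ from $d_n=H_n(B_{2k+4})/H_n(B_{2k+2})$ via Lemma~\ref{lem:4.1} and the recurrence \eqref{4.1a}, which is the same computation as the Lemma~\ref{lem:4.2} formula you invoke (by Corollary~\ref{cor:2.3}, \eqref{4.4} is precisely $s_n=-d_n/d_{n-1}-t_n\,d_{n-2}/d_{n-1}$), yielding $s_n=\frac{(n+1)(2n+1)(4n^2+6n+1)}{(4n+1)(4n+5)}$, whose numerator contains an irreducible quadratic rather than only the linear factors you anticipated. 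The rest of your plan --- defining $r_n=H_n(B_{2k})/H_{n-1}(B_{2k+2})$, checking $n=1,2$, and reducing $r_{n+1}=-s_nr_n-t_nr_{n-1}$ to a telescoping three-term identity for $h_n={\mathcal H}_{2n+1}$ --- is precisely the paper's argument in \eqref{6a.3}--\eqref{6a.5}.
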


This is the identity \eqref{1.8} in the Introduction. 

\begin{proposition}\label{prop:6.2}
For all $n\geq 0$ we have
\begin{equation}\label{6.1}
H_n\big((2k+1)B_{2k}\big) = \frac{(-1)^n(2n+2)!}{n!(n+1)!^3}\cdot
\left({\mathcal H}_n+{\mathcal H}_{n+1}\right)\cdot H_n\big((2k+3)B_{2k+2}\big).
\end{equation}
\end{proposition}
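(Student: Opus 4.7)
The plan is to follow the pattern of Proposition~\ref{prop:3.4} and Proposition~\ref{prop:3.5}. I will set
\[
c_{k} := (2k+3)B_{2k+2}, \qquad b_{k} := (2k+1)B_{2k} \;\; (k\ge 1), \qquad b_{0} := 1,
\]
so that $b_{k}=c_{k-1}$ for $k\ge 1$; the parameter $\alpha=1$ in Lemma~\ref{lem:2.5} then matches $b_{0}=B_{0}=1$, as flagged by the ``$(1)$'' in Table~\ref{tab:2}. Lemma~\ref{lem:2.5} yields the three-term recurrence
\[
r_{n+1} = -s_{n}r_{n} - t_{n}r_{n-1}, \qquad r_{n} := \frac{H_{n}(b_{k})}{H_{n-1}(c_{k})},
\]
where $(s_{n})$ and $(t_{n})$ are the coefficients in \eqref{2.4} for the monic orthogonal polynomials associated with $(c_{k})$.

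The central step will be the determination of these $(s_{n},t_{n})$, together with a closed form for $H_{n}(c_{k})$. Here I would mimic the Lemma~\ref{lem:2.4} trick that drove the proof of Proposition~\ref{prop:3.5}. Set $C_{k}(x):=B_{2k+3}(x)$; since $B_{2j+1}=0$ for every $j\ge 1$ and $2k+3\ge 3$ is odd, we have $C_{k}(0)=B_{2k+3}=0$ for all $k\ge 0$. Using $\tfrac{d}{dx}B_{2k+3}(x)=(2k+3)B_{2k+2}(x)$ gives $C_{k}'(0)=c_{k}$, so by Lemma~\ref{lem:2.4} the orthogonal polynomials for $(c_{k})$ are obtained by specializing at $x=0$ those of $B_{2k+3}(x)$. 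The latter family is expected to admit an $x$-dependent recurrence of the shape \eqref{2.4} coming from the appropriate specialization of \cite[Thm.~5.1]{DJ1} (in parallel with the Euler case exploited in the proof of Proposition~\ref{prop:3.5}); reading off $s_{n}(x),t_{n}(x)$ and setting $x=0$ provides the pair needed here, while Corollary~\ref{cor:2.3} then produces the closed form of $H_{n}(c_{k})$.

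With $(s_{n},t_{n})$ and $H_{n}(c_{k})/H_{n-1}(c_{k})$ in hand, the verification of \eqref{6.1} reduces to substituting the candidate
\[
r_{n} = \frac{(-1)^{n}(2n+2)!}{n!(n+1)!^{3}}\cdot\bigl({\mathcal H}_{n}+{\mathcal H}_{n+1}\bigr)\cdot\frac{H_{n}(c_{k})}{H_{n-1}(c_{k})}
\]
into $r_{n+1}=-s_{n}r_{n}-t_{n}r_{n-1}$, clearing the rational prefactors, and reducing to a linear identity among the three consecutive values $\mathcal{H}_{n-1}+\mathcal{H}_{n}$, $\mathcal{H}_{n}+\mathcal{H}_{n+1}$, $\mathcal{H}_{n+1}+\mathcal{H}_{n+2}$. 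Using only the telescoping relation $\mathcal{H}_{m+1}-\mathcal{H}_{m}=1/(m+1)$, this should collapse to a polynomial identity in $n$, modelled on the step \eqref{3.10}--\eqref{3.11} that closed the proof of Proposition~\ref{prop:3.1}. The initial values $r_{1}$ and $r_{2}$ will be verified by direct computation.

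The main obstacle I foresee is the orthogonal-polynomial input of step two: one must identify the appropriate $x$-dependent family from \cite{DJ1} attached to $B_{2k+3}(x)$ (the Euler analog \cite[Thm.~5.1]{DJ1} with $\nu=1$ is quoted in the proof of Proposition~\ref{prop:3.5}, but the Bernoulli side demands a careful match to the right linear substitution) and then specialize so that the resulting closed form for $H_{n}((2k+3)B_{2k+2})$ combines tidily with the $(-1)^{n}(2n+2)!/(n!(n+1)!^{3})$ prefactor in \eqref{6.1}. Once these are aligned, the terminal harmonic-number check is essentially a replay of the closing argument of the proof of Proposition~\ref{prop:3.1}.
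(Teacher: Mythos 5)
Your overall scaffolding is the same as the paper's: apply Lemma~\ref{lem:2.5} with $\alpha=1$ to reduce \eqref{6.1} to the recurrence $r_{n+1}=-s_nr_n-t_nr_{n-1}$, check $n=1,2$ directly, and close with a telescoping identity for $h_n={\mathcal H}_n+{\mathcal H}_{n+1}$ (this is precisely the paper's step \eqref{6.14}). Even your antiderivative idea coincides with the paper's: the paper works with $B_{2k+3}(\tfrac{1+x}{2})$ at $x=-1$, which is your $C_k(x)=B_{2k+3}(x)$ at $x=0$ after an affine substitution, and both rest on Lemma~\ref{lem:2.4} together with the vanishing of $B_{2k+3}$.

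The genuine gap is the input you assume at the central step, namely the recurrence coefficients for the orthogonal polynomials of the family $B_{2k+3}(x)$ — in particular $s_n$. You expect these to come from ``the appropriate specialization'' of \cite[Thm.~5.1]{DJ1}; no such result exists. Theorem~5.1 of \cite{DJ1} concerns Euler polynomials $E_{2k+\nu}(\tfrac{1+x}{2})$, $\nu=1,2$, and the Bernoulli result actually available (Theorem~4.1 of \cite{DJ1}, quoted in \eqref{6.6}) covers only $B_{2k+1}(\tfrac{1+x}{2})$. Your family is the \emph{left-shift} of that sequence, and its recurrence coefficients cannot be read off from any nice closed form: already $d_1(x)=(x^2-1)(3x^2-11)/48$ by \eqref{6.7}, so the Hankel determinants of the shifted family carry irreducible factors such as $3x^2-11$, and the coefficients $S_n(x)$, $T_n(x)$ inherit them. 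Filling exactly this hole is the content of the paper's Lemma~\ref{lem:6.2}: the coefficient $t_n$ does come cheaply from the known evaluation \eqref{6.3} of \cite{DJ2} via Corollary~\ref{cor:2.3}, but $s_n$ is obtained by applying the left-shift machinery of Lemma~\ref{lem:4.1} (from \cite{MWY}) and Lemma~\ref{lem:4.2} (from \cite{Ha16}) to the known family $B_{2k+1}(\tfrac{1+x}{2})$, expressing $S_n(x)$ as in \eqref{6.12} through the ratios $d_{n-1}(x)/d_n(x)$ and $D_n(x)/D_{n-1}(x)$, and then computing $s_n=\lim_{x\to-1}S_n(x)$ using $d_n(-1)=0$ and the limit formulas \eqref{6.8}--\eqref{6.11}; this yields $s_n=\frac{(n+1)^2(2n^2+4n+1)}{(2n+1)(2n+3)}$, whose irreducible quadratic factor is itself a sign that no ``nice'' known family specializes to it. Without this derivation (or some substitute for it), your argument cannot produce the $s_n$ that the final harmonic-number verification requires; once $(s_n,t_n)$ as in \eqref{6.2} are in hand, the remainder of your plan goes through exactly as in the paper.
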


The next identity does not contain harmonic or generalized harmonic
numbers; but still, it belongs to the same category as the previous two 
identities.

\begin{proposition}\label{prop:6.3}
If the sequence $(b_0, b_1, \ldots)$ is defined by
\[
b_k:=\begin{cases}
0,& k=0,\\
E_{2k-1}(1)/(2k-1)!,& k\geq 1,
\end{cases}
\]
then for all $n\geq 1$ we have
\begin{equation}\label{6b.1}
H_n(b_k) = (-1)^n\frac{(4n+2)!}{(2n-1)!}
\cdot H_n\left(\frac{E_{2k+1}(1)}{(2k+1)!}\right).
\end{equation}
\end{proposition}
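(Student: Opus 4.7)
The plan is to apply Lemma~\ref{lem:2.5} to the sequence $c_{k}:=E_{2k+1}(1)/(2k+1)!$ with $\alpha=0$, so that the sequence $(b_{k})_{k\geq 0}$ of Proposition~\ref{prop:6.3} is precisely the right-shifted sequence in \eqref{2.7}. For this I need the three-term recurrence coefficients $s_{n}$, $t_{n}$ associated with $c_{k}$, together with the ratio $H_{n}(c_{k})/H_{n-1}(c_{k})$.

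To obtain these ingredients, I would exploit the auxiliary sequence $d_{j}:=E_{j+1}(1)/(j+1)!$ from Proposition~\ref{prop:3.6}. By \eqref{3.4} together with $B_{2j+1}=0$ for $j\geq 1$, $d_{j}$ vanishes at every odd $j\geq 1$, and moreover $c_{k}=d_{2k}$. Thus the first case of Lemma~\ref{lem:4.3}, applied to the Hankel matrix of $(d_{j})$, yields the checkerboard identities
\[
H_{2m}(d_{j})=H_{m}(c_{k})\,H_{m-1}(c_{k+1}),\qquad H_{2m+1}(d_{j})=H_{m}(c_{k})\,H_{m}(c_{k+1}).
\]
Combined with the ratio $H_{n}(d_{j})/H_{n-1}(d_{j})=(-1)^{n}n!^{2}/(2(2n)!(2n+1)!)$ derived inside the proof of Proposition~\ref{prop:3.6} from \eqref{3.25}, dividing consecutive levels (using $n=2m$ and $n=2m+1$) gives the closed forms
\[
\frac{H_{m}(c_{k})}{H_{m-1}(c_{k})}=\frac{(2m)!^{2}}{2(4m)!(4m+1)!},\qquad
\frac{H_{m}(c_{k+1})}{H_{m-1}(c_{k+1})}=-\frac{(2m+1)!^{2}}{2(4m+2)!(4m+3)!}.
\]

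Next I would feed these ratios into Corollary~\ref{cor:2.3} (used via $t_{n}=H_{n}(c_{k})H_{n-2}(c_{k})/H_{n-1}(c_{k})^{2}$, which is independent of $c_{0}$) and into Lemma~\ref{lem:4.2}. After routine factorial cancellations I expect to obtain the compact expressions
\[
t_{n}=\frac{1}{16(4n-3)(4n-1)^{2}(4n+1)},\qquad s_{n}=\frac{1}{2(4n-1)(4n+3)}\qquad(n\geq 1).
\]
Setting $r_{n}:=H_{n}(b_{k})/H_{n-1}(c_{k})$, the conclusion of Proposition~\ref{prop:6.3} is equivalent, after multiplication by $H_{n}(c_{k})/H_{n-1}(c_{k})$, to
\[
r_{n}=(-1)^{n}\,\frac{2n(2n+1)(2n)!}{(4n)!}.
\]
Substituting this expression and the formulas for $s_{n}$, $t_{n}$ into the recurrence $r_{n+1}=-s_{n}r_{n}-t_{n}r_{n-1}$ from Lemma~\ref{lem:2.5} and clearing the common factor $(2n)!/\bigl[2(4n)!(4n-1)(4n+1)(4n+3)\bigr]$, the identity reduces to the polynomial equality
\[
(n+1)(2n+3)(4n-1)=8n^{3}+18n^{2}+7n-3,
\]
which is immediate by expansion. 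The initial values $r_{1}=-1/2$ and $r_{2}=1/84$ are verified by direct computation, completing the proof.

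The main technical obstacle is the factorial bookkeeping required to distill the compact expressions for $s_{n}$, $t_{n}$, and for $r_{n}$ from the Hankel determinant ratios of $(d_{j})$; once these are in hand, the verification of the recurrence is a routine polynomial check.
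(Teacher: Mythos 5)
Your proof is correct, and the deferred ``factorial bookkeeping'' does work out: the ratios you extract via the checkerboard splitting agree with the paper's \eqref{6a.7}, your $s_n$ and $t_n$ coincide with \eqref{6a.6} in Lemma~\ref{lem:6.6}, your target value $r_n=(-1)^n\,2n(2n+1)(2n)!/(4n)!$ is identical to the paper's $(-1)^n(2n+1)!/\bigl(2(4n-1)!\bigr)$ in \eqref{6a.9}, and the common-factor reduction, the polynomial identity, and the initial values $r_1=-1/2$, $r_2=1/84$ all check. The endgame is the same as the paper's: apply Lemma~\ref{lem:2.5} with $\alpha=0$ and verify the closed form of $r_n$ against the three-term recurrence. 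Where you genuinely diverge is in assembling the ingredients. The paper's Lemma~\ref{lem:6.6} quotes two further evaluations of Han, (H13) and (H22), which give closed product formulas for both $H_n(c_k)$ and $H_n(c_{k+1})$ (equation \eqref{6a.7}); it then forms $d_n=H_n(c_{k+1})/H_n(c_k)$ and solves for $s_n$ from the left-shift recurrence \eqref{4.1a}. You instead derive both needed ratios from the single evaluation \eqref{3.25} (Han's (H12), already used for Proposition~\ref{prop:3.6}) by applying the checkerboard Lemma~\ref{lem:4.3} to the interleaved sequence $d_j=E_{j+1}(1)/(j+1)!$, whose odd-indexed terms vanish, and you then obtain $s_n$ from Han's formula in Lemma~\ref{lem:4.2} rather than from \eqref{4.1a} (the two are equivalent in content, but yours works directly with determinant ratios). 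Your route is more self-contained---one external determinant evaluation instead of three---and it reuses the even/odd splitting device that the paper itself employs only in Section~\ref{sec5} (Proposition~\ref{prop:5.2}); the paper's route is shorter on the page precisely because the two extra evaluations are simply cited from the literature. Both arguments are sound.
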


The Hankel determinants on the right of \eqref{6a.1}, \eqref{6.1}, and 
\eqref{6b.1} are given explicitly by \eqref{1.6}, \eqref{6.3}, and
\eqref{6a.7}, respectively.

We prove these three results in sequence. First, for the proof of 
Proposition~\ref{prop:6.1} we require the following lemma.

\begin{lemma}\label{lem:6.4}
If $P_n(y)$, $n=0, 1, \ldots$, are the monic orthogonal polynomials with respect
to the sequence $\big(B_{2k+2}\big)_{k\geq 0}$, then
\begin{equation}\label{6a.2}
s_n=\frac{(n+1)(2n+1)(4n^2+6n+1)}{(4n+1)(4n+5)},\qquad
t_n=\frac{n^3(n+1)(2n-1)(2n+1)^3}{(4n-1)(4n+1)^2(4n+3)},
\end{equation}
with $s_n, t_n$ as in \eqref{2.4}.
\end{lemma}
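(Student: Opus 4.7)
The plan is to handle the two claims separately. The formula for $t_n$ is essentially a direct consequence of the known Hankel determinant evaluation \eqref{1.6}. Since $B_2 = 1/6 \neq 0$, Lemma~\ref{lem:4.4} gives $H_n(B_{2k+2}) = (1/6)^{n+1} H_n(B_{2k+2}/B_2)$, and because the rescaled sequence has leading term $1$, Corollary~\ref{cor:2.3} yields $H_n(B_{2k+2}/B_2) = \prod_{\ell=1}^n t_\ell^{n+1-\ell}$. Rescaling the moments leaves the monic orthogonal polynomials (and hence the coefficients $s_n,t_n$) unchanged, so comparing the result with \eqref{1.6} factor by factor—or, equivalently, using $t_n = H_n H_{n-2}/H_{n-1}^2$ and simplifying—identifies $t_n$ with $b(n)$ from \eqref{1.6}, which is precisely the stated formula.

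For $s_n$ I would apply Lemma~\ref{lem:4.2} with $c_k = B_{2k+2}$, so that $c_{k+1} = B_{2k+4}$; then every Hankel determinant on the right-hand side of \eqref{4.4} is known in closed form from \eqref{1.6} and \eqref{1.7}. A cleaner alternative is to go through Lemma~\ref{lem:4.1}: the quotient $d_n := H_n(B_{2k+4})/H_n(B_{2k+2})$ becomes an explicit product after applying \eqref{1.6} and \eqref{1.7}, and then the recurrence \eqref{4.1a} yields
\[
s_{n+1} = -\frac{d_{n+1} + t_{n+1}\,d_{n-1}}{d_n}.
\]
Most factors in consecutive ratios $d_{n\pm 1}/d_n$ telescope, so this expression collapses into a rational function of $n$ which, after inserting the formula for $t_{n+1}$ from the first part, should simplify to the claimed $s_n$.

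The main obstacle is purely algebraic: one must verify that the resulting rational expression collapses to the specific form $(n+1)(2n+1)(4n^2+6n+1)/[(4n+1)(4n+5)]$. Since the quadratic $4n^2+6n+1$ is irreducible over $\mathbb{Z}$, the many cancellations among the $(4\ell\pm 1)$ and $(4\ell\pm 3)$ factors appearing in \eqref{1.6} and \eqref{1.7} must produce this factor exactly. To guard against sign or shift errors I would first compute $s_0, s_1, s_2$ directly from the first few moments $B_2, B_4, B_6, B_8$ via \eqref{2.3}, and then invoke the fact that two rational functions of bounded degree agreeing at sufficiently many points must coincide.
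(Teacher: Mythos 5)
Your proposal is correct and follows essentially the same route as the paper: $t_n$ is read off from Corollary~\ref{cor:2.3} together with \eqref{1.5}--\eqref{1.6} (the normalization $B_2=\tfrac16\neq 1$ that you handle via Lemma~\ref{lem:4.4} cancels anyway in $t_n=H_nH_{n-2}/H_{n-1}^2$), and $s_n$ is obtained exactly by your ``cleaner alternative'': the paper forms $d_n=H_n(B_{2k+4})/H_n(B_{2k+2})$ from \eqref{1.6}--\eqref{1.7}, telescopes the ratio to $d_n/d_{n-1}=-\tfrac{(n+1)^2(2n+1)(2n+3)}{(4n+3)(4n+5)}$, and solves \eqref{4.1a} for $s_n$. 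Note also that your first suggested route via Lemma~\ref{lem:4.2} is the same computation in disguise, since \eqref{4.4} rearranges to $s_n=-d_n/d_{n-1}-t_n\,d_{n-2}/d_{n-1}$.
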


\begin{proof}
In view of Corollary~\ref{cor:2.3}, the identities \eqref{1.5} and \eqref{1.6}
immediately give $t_n$ in \eqref{6.2}. Next, by \eqref{4.4} and using
\eqref{1.5}--\eqref{1.7}, we get after some easy manipulations,
\[
d_n=\frac{H_n(B_{2k+4})}{H_n(B_{2k+2})} = \left(\frac{-1}{5}\right)^{n+1}
\prod_{\ell=1}^{n}\left(\frac{(\ell+1)^2(2\ell+3)(4\ell-1)(4\ell+1)}
{\ell^2(2\ell-1)(4\ell+3)(4\ell+5)}\right)^{n+1-\ell},
\]
so that 
\[
\frac{d_n}{d_{n-1}} = \frac{-1}{5}\prod_{\ell=1}^{n}
\frac{(\ell+1)^2(2\ell+3)(4\ell-1)(4\ell+1)}{\ell^2(2\ell-1)(4\ell+3)(4\ell+5)}
= -\frac{(n+1)^2(2n+1)(2n+3)}{(4n+3)(4n+5)},
\]
where the right-most term also follows by easy manipulations.
Using this and the identity \eqref{4.1a}, along with the second equation in
\eqref{6a.2}, we get
\begin{align*}
s_n &= -\frac{d_n}{d_{n-1}}-t_n\frac{d_{n-2}}{d_{n-1}}\\
& =\frac{(n+1)^{2}(2n+1)(2n+3)}{(4n+3)(4n+5)}+\frac{n^{3}(n+1)(2n-1)(2n+1)^{3}}{(4n-1)(4n+1)^{2}(4n+3)}\cdot\frac{(4n-1)(4n+1)}{n^{2}(2n-1)(2n+1)}\\
& =\frac{(n+1)(2n+1)}{(4n+1)(4n+5)}(4n^{2}+6n+1),
\end{align*}
where the final term is again the result of some easy manipulations. This 
completes the proof of Lemma~\ref{lem:6.4}.
\end{proof}

\begin{proof}[Proof of Proposition~\ref{prop:6.1}]
We proceed as in the proofs in Section~3. By \eqref{1.5} and \eqref{1.6} we
have
\begin{align*}
\frac{H_n(B_{2k+2})}{H_{n-1}(B_{2k+2})} &= \frac{1}{6}\prod_{\ell=1}^n
\frac{\ell^3(\ell+1)(2\ell-1)(2\ell+1)^3}{(4\ell-1)(4\ell+1)^2(4\ell+3)}\\
&= \frac{(2n+1)^4(n+1)(2n)!^6}{(4n+1)!(4n+3)!},
\end{align*}
where the last term follows after some tedious but straightforward
manipulations. Therefore, in order to prove \eqref{6a.1}, we need to show that
\begin{align}
r_n &:= \frac{H_n(B_{2k})}{H_{n-1}(B_{2k+2})}
=\frac{H_n(B_{2k})}{H_n(B_{2k+2})}\cdot\frac{H_n(B_{2k+2})}{H_{n-1}(B_{2k+2})}\label{6a.3} \\
&= (-1)^n\frac{(2n)!^2(2n+1)!}{(4n+1)!}{\mathcal H}_{2n+1}.\nonumber
\end{align}
Direct computation shows that this holds for $n=1$ and $n=2$, and by 
Lemma~\ref{lem:2.5} we are done if the right-most term in \eqref{6a.3} satisfies
\begin{equation}\label{6a.4}
r_{n+1} = -s_nr_n-t_nr_{n-1},
\end{equation}
with $s_n, t_n$ as in \eqref{6a.2}. For greater ease of notation we now set
$h_n:={\mathcal H}_{2n+1}$. Clearing the denominators and all common factors,
we see that \eqref{6a.4} is equivalent to 
\begin{equation}\label{6a.5}
(n+1)(2n+3)(4n+1)h_{n+1} = (4n+3)(4n^2+6n+1)h_n-n(2n+1)(4n+5)h_{n-1}.
\end{equation}
On the other hand, by the definition \eqref{1.9} of the harmonic numbers we
have
\[
h_{n+1}-h_n=\frac{4n+5}{(2n+2)(2n+3)},\qquad
h_n-h_{n-1}=\frac{4n+1}{2n(2n+1)},
\]
so that
\[
\frac{(2n+2)(2n+3)}{4n+5}\left(h_{n+1}-h_n\right)
= \frac{2n(2n+1)}{4n+1}\left(h_n-h_{n-1}\right).
\]
If we collect the coefficients of $h_n$ and simplify, we see that this last
identity is equivalent to \eqref{6a.5}; this completes the proof.
\end{proof}

Next, for the proof of Proposition~\ref{prop:6.2} we require the following
lemma; its proof is more involved than that of Lemma~\ref{lem:6.4}.

\begin{lemma}\label{lem:6.2} 
If $P_n(y)$, $n=0, 1, \ldots$, are the monic orthogonal polynomials with 
respect to the sequence $\big((2k+3)B_{2k+2}\big)_{k\geq 0}$, then
\begin{equation}\label{6.2}
s_n=\frac{(n+1)^{2}(2n^2+4n+1)}{(2n+1)(2n+3)},\qquad
t_n=\frac{n^{3}(n+1)^{3}}{4(2n+1)^2},
\end{equation}
with $s_n, t_n$ as in \eqref{2.4}.
\end{lemma}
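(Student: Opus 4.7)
The plan is to follow the template of the proof of Lemma~\ref{lem:6.4}. The value of $t_n$ is immediate from \eqref{6.3} via Corollary~\ref{cor:2.3}: the product formula \eqref{6.3} presents $H_n((2k+3)B_{2k+2})$ in the shape $c_0^{n+1}\prod_{\ell=1}^n b(\ell)^{n+1-\ell}$ with $b(\ell)=\ell^{3}(\ell+1)^{3}/(4(2\ell+1)^{2})$ (note that $c_0=3B_2=1/2$). Since Corollary~\ref{cor:2.3} forces the Hankel product to coincide with $c_0^{n+1}\prod t_\ell^{n+1-\ell}$, matching the two factorizations identifies $t_n = b(n) = n^{3}(n+1)^{3}/(4(2n+1)^{2})$, which is exactly the stated expression.

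For $s_n$, the most direct analogue of Lemma~\ref{lem:6.4} would invoke \eqref{4.4} with $c_k := (2k+3)B_{2k+2}$, for which one needs a closed evaluation of $H_n(c_{k+1}) = H_n((2k+5)B_{2k+4})$. Unlike the setup of Lemma~\ref{lem:6.4}, where \eqref{1.7} supplied $H_n(B_{2k+4})$, no such evaluation is readily at hand, and this is the main technical hurdle; small-$n$ computations suggest that $H_n((2k+5)B_{2k+4})$ does not admit a clean factorization of the form \eqref{1.5}. To bypass this, I would apply Lemma~\ref{lem:2.4} with the polynomial lift $c_k(x) := B_{2k+3}(x)$. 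Since $B_{2j+1}=0$ for $j\geq 1$, we have $B_{2k+3}(0)=0$ for every $k\geq 0$, and the identity $\tfrac{d}{dx}B_m(x)=mB_{m-1}(x)$ yields $\tfrac{d}{dx}B_{2k+3}(x)|_{x=0}=(2k+3)B_{2k+2}$; the nonvanishing of the Hankel determinants $H_n((2k+3)B_{2k+2})$ is guaranteed by \eqref{6.3}. Hence Lemma~\ref{lem:2.4} identifies the monic orthogonal polynomials of $(2k+3)B_{2k+2}$ with the specializations $P_n(y;0)$ of the orthogonal family $P_n(y;x)$ attached to the polynomial sequence $B_{2k+3}(x)$.

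The remaining step is to exhibit the three-term recurrence coefficients $s_n(x),t_n(x)$ for the family $\{B_{2k+3}(x)\}_{k\geq 0}$, in the spirit of Theorem~5.1 of \cite{DJ1} (as invoked for Euler families in the proofs of Propositions~\ref{prop:3.4} and~\ref{prop:3.5}), and then to substitute $x=0$. The resulting equation $t_n(0)=n^{3}(n+1)^{3}/(4(2n+1)^{2})$ serves as an internal consistency check with the first step, while $s_n(0)$ should simplify to $(n+1)^{2}(2n^2+4n+1)/((2n+1)(2n+3))$, completing \eqref{6.2}. The principal obstacle is thus extracting the parametric recurrence for $B_{2k+3}(x)$: if it is not already recorded in \cite{DJ1}, it must be derived, e.g.\ by computing $H_n(B_{2k+3}(x))$ as a polynomial in $x$ for small $n$, guessing a product factorization in the style of \eqref{3.17}, and then verifying it inductively via the orthogonality characterization — or, alternatively, by matching the moment sequence $B_{2k+3}(x)$ (through its generating function $te^{xt}/(e^t-1)$) against a classical hypergeometric orthogonal family such as the continuous Hahn polynomials, whose recurrence coefficients have exactly the shape suggested by \eqref{6.2}.
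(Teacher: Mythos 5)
Your evaluation of $t_n$ is correct and coincides with the paper's first step. Your reduction of $s_n$ via Lemma~\ref{lem:2.4} is also the paper's idea (the paper uses the lift $c_k(x)=B_{2k+1}(\tfrac{1+x}{2})$ and the point $x=-1$; your $B_{2k+3}(x)$ at $x=0$ is the same reduction up to an affine substitution). The gap is in the step you yourself flag as the ``principal obstacle'': producing the recurrence coefficients of the orthogonal polynomials attached to the \emph{shifted} parametric family $B_{2k+3}(x)$. This family is not covered by Theorem~4.1 or~5.1 of \cite{DJ1}, which treat the unshifted sequences, and neither of your two proposed routes works as stated. First, there is no product factorization of $H_n\big(B_{2k+3}(x)\big)$ ``in the style of \eqref{3.17}'' to guess: writing $H_n(c_{k+1}(x))=H_n(c_k(x))\,d_n(x)$ as in Lemma~\ref{lem:4.1}, the paper's own computation \eqref{6.7} gives $d_1(x)=(x^2-1)(3x^2-11)/48$, so already at $n=1$ an irreducible non-linear factor $3x^2-11$ appears, and the factors only get worse for larger $n$; this is the same phenomenon that blocks a formula of type \eqref{1.5} for $H_n(B_{2k})$. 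Second, the continuous Hahn identification applies to the unshifted family: the left shift replaces the moment functional $L$ by $L(y\,\cdot\,)$, a Christoffel-type transform which does not stay inside a classical family and whose recurrence coefficients involve precisely the non-product quantities $d_n(x)$ above.

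There is also an analytic subtlety your sketch glosses over. Since $B_{2k+3}(0)=0$ for all $k\geq 0$, \emph{every} Hankel determinant of your lifted sequence vanishes identically at the specialization point, so any determinantal formula for the recurrence coefficients (such as \eqref{4.4}) is of the form $0/0$ at $x=0$ and must be evaluated as a limit; one cannot simply ``substitute $x=0$''. This is exactly where the paper spends its effort: it expresses the shifted-family coefficient $S_n(x)$ through Lemmas~\ref{lem:4.1} and~\ref{lem:4.2}, arriving at \eqref{6.12}, and then computes $\lim_{x\to-1}d_{n-1}(x)/d_n(x)$ via the limit formula \eqref{6.8} quoted from \cite[Lemma~6.2]{DJ2} (a genuine external input), while the vanishing $d_\ell(-1)=0$ collapses the sum defining $D_n(x)$ to its $\ell=-1$ term, giving $\lim_{x\to-1}D_n(x)/D_{n-1}(x)=\tau_{n+1}(-1)$ as in \eqref{6.11}. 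Only after these limit evaluations does
\[
s_n=\lim_{x\to-1}S_n(x)
=\frac{(n+1)^{2}(n+2)}{2(2n+3)}+\frac{n(n+1)^2}{2(2n+1)}
\]
emerge. Without this (or an equivalent) limit computation, your argument establishes only the value of $t_n$ in \eqref{6.2}, not that of $s_n$.
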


\begin{proof}
In \cite[Cor.~6.3]{DJ2} we showed that
\begin{equation}\label{6.3}
H_n\big((2k+3)B_{2k+2}\big) = \frac{1}{2^{n+1}}\prod_{\ell=1}^{n}
\left(\frac{\ell^{3}(\ell+1)^3}{4(2\ell+1)^2}\right)^{n+1-\ell},
\end{equation}
which immediately gives $t_n$ in \eqref{6.2}.

To obtain the formula for $s_n$ in \eqref{6.2}, we consider the sequence
\begin{equation}\label{6.4}
c_k = c_k(x) := B_{2k+1}(\tfrac{1+x}{2}),\qquad k=0, 1,\ldots,
\end{equation}
and note that
\begin{equation}\label{6.5}
(2k+3)B_{2k+2}=2c_{k+1}'(-1)\quad\hbox{and}\quad
c_{k+1}(-1) = B_{2k+3}=0,\quad k=0, 1,\ldots,
\end{equation}
where $c_{k+1}'(x)$ denotes the derivative. The identities in \eqref{6.5}
will allow us to use Lemma~\ref{lem:2.4}, with $k+1$ in place of $k$.

In \cite[Theorem~4.1]{DJ1} it was shown that the orthogonal polynomials
with respect to the sequence $\big(c_k(x)\big)$ in \eqref{6.4} are given by
\[
W_{n+1}(y;x)=\big(y+\sigma_n(x)\big)\cdot W_{n}(y;x)-\tau_n(x)W_{n-1}(y;x),
\]
where 
\begin{equation}\label{6.6}
\sigma_n(x)=\binom{n+1}{2}-\frac{x^{2}-1}{4},\qquad 
\tau_n(x)=\frac{n^{4}(n^{2}-x^{2})}{4(2n+1)(2n-1)}. 
\end{equation}
We are now going to use Lemmas~\ref{lem:4.1} and~\ref{lem:4.2}, with
$\sigma_n(x)$, $\tau_n(x)$ in place of $s_n$ and $t_n$, respectively, and
note that $d_n$ in \eqref{4.1} is a function of $x$; we write it as 
$d_n(x)$. We first observe that with \eqref{4.1} and \eqref{6.6} we have
$d_0(x)=-\sigma_0(x)=(1-x^2)/4$, so that $d_0(-1)=0$. Similarly,
\begin{equation}\label{6.7}
d_1(x)=\det\begin{pmatrix}
-\sigma_{0} & 1\\
\tau_{1} & -\sigma_{1}
\end{pmatrix}
=\det\begin{pmatrix}
\frac{x^{2}-1}{4} & 1\\
\frac{1-x^{2}}{12} & \frac{x^{2}-5}{4}
\end{pmatrix}
=(x^{2}-1)\cdot\frac{3x^{2}-11}{48},
\end{equation}
so that $d_1(-1)=0$. Further, for $n\geq 2$ we use Lemma~6.2 in \cite{DJ2},
where it was shown that 
\begin{equation}\label{6.8}
\lim_{x\rightarrow-1}\frac{d_{n}(x)}{x^{2}-1}
=\frac{1}{4}\left(-\frac{2}{3}\right)^{n}\prod_{\ell=2}^n
\left(\frac{(\ell+1)^2(2\ell-1)}{\ell(\ell-1)(2\ell+1)}\right)^{n+1-\ell},
\end{equation}
so altogether we have
\begin{equation}\label{6.9}
d_n(-1) = 0\quad\hbox{for all}\quad n\geq 0,
\end{equation}
while $d_{-1}(-1)=1$ by convention. With \eqref{6.8} we also obtain
\begin{equation}\label{6.10}
\lim_{x\rightarrow-1}\frac{d_{n-1}}{d_{n}} 
=\lim_{x\rightarrow-1}\frac{d_{n-1}/(x^{2}-1)}{d_{n}/(x^{2}-1)}
= -\frac{2(2n+1)}{n(n+1)^{2}},
\end{equation}
where the last equation is obtained after some easy manipulations. With
\eqref{6.7} and the identity for $d_0(x)$ we see that \eqref{6.10} holds
for all $n\geq 1$. 

We also require the first identity in \eqref{4.3}, with
\[
D_n=D_n(x):=\left(\prod_{\ell=1}^{n+1}\tau_{\ell}(x)\right)
\cdot\sum_{\ell=-1}^{n}\frac{d_{\ell}(x)^2}{\prod_{j=1}^{\ell+1}\tau_j(x)}.
\]
Then by \eqref{6.9} and the second identity in \eqref{6.6} we have
\begin{equation}\label{6.11}
\lim_{x\rightarrow-1}\frac{D_n(x)}{D_{n-1}(x)}=\tau_{n+1}(-1)
=\frac{n(n+1)^{4}(n+2)}{4(2n+1)(2n+3)}.
\end{equation}

For the final stage of the proof, we let the orthogonal polynomials with 
respect to $\big(c_{k+1}(x)\big)_{k\geq 0}$ be given by 
\[
Q_{n+1}(y;x)=\big(y+S_n(x)\big)\cdot Q_n(y;x)-T_n(x)\cdot Q_{n-1}(y;x).
\]
Then by Lemmas~\ref{lem:4.1} and~\ref{lem:4.2}, with $s_n$ replaced by $S_n(x)$
and $c_k$ by $c_{k+1}(x)$, we get the following expression for $S_n(x)$; for
greater ease of notation we suppress the variable $x$.
\begin{align*}
S_n &=\frac{-1}{H_{n-1}(c_{k+2})}
\left(\frac{H_{n-1}(c_{k+1})H_{n}(c_{k+2})}{H_{n}(c_{k+1})}
+\frac{H_{n}(c_{k+1})H_{n-2}(c_{k+2})}{H_{n-1}(c_{k+1})}\right)\\
& =\frac{-1}{H_{n-1}(c_{k})D_{n-1}}
\left(\frac{H_{n-1}(c_{k})d_{n-1}H_{n}(c_{k})D_{n}}{H_{n}(c_{k})d_{n}}
+\frac{H_{n}(c_{k})d_{n}H_{n-2}(c_{k})D_{n-2}}{H_{n-1}(c_{k})d_{n-1}}\right).
\end{align*}
By \eqref{2.6} we have
\[
\frac{H_{n}(c_{k})d_{n}H_{n-2}(c_{k})}{H_{n-1}(c_{k})^2}=\tau_n,
\]
and thus
\begin{equation}\label{6.12}
S_n(x)=-\frac{D_n(x)}{D_{n-1}(x)}\cdot\frac{d_{n-1}(x)}{d_{n}(x)}
-\tau_n(x)\cdot\frac{D_{n-2}(x)}{D_{n-1}(x)}\cdot\frac{d_{n}(x)}{d_{n-1}(x)}.
\end{equation}
Finally, using Lemma~\ref{2.4} with \eqref{6.5}, and then \eqref{6.6}, 
\eqref{6.10} and \eqref{6.11} substituted into \eqref{6.12}, we get after some
easy manipulations,
\[
s_n = \lim_{x\rightarrow-1}S_n(x)
=\frac{(n+1)^{2}(n+2)}{2(2n+3)}+\frac{n(n+1)^2}{2(2n+1)},
\]
which immediately gives the desired first identity in \eqref{6.2}.
\end{proof}

\begin{proof}[Proof of Proposition~\ref{prop:6.2}]
We proceed as in the proofs in Section~\ref{sec3}. From \eqref{6.3} we obtain
\[
\frac{H_n\big((2k+3)B_{2k+2}\big)}{H_{n-1}\big((2k+3)B_{2k+2}\big)}
= \frac{1}{2}\prod_{\ell=1}^{n}\frac{\ell^{3}(\ell+1)^3}{4(2\ell+1)^2}
= \frac{n!^5(n+1)!^3}{2(2n+1)!^2}.
\]
Therefore, in order to prove \eqref{6.1}, we need to show that
\begin{equation}\label{6.13}
\frac{H_n\big((2k+1)B_{2k}\big)}{H_{n-1}\big((2k+3)B_{2k+2}\big)}
=(-1)^n\frac{n!^3(n+1)!}{(2n+1)!}\left({\mathcal H}_n+{\mathcal H}_{n+1}\right).
\end{equation}
The cases $n=1$ and $n=2$ can be verified by direct computations. By 
Lemma~\ref{lem:2.5} with $\alpha=1$, the left-hand side satisfies the identity
\eqref{2.8}, with $s_n, t_n$ given by \eqref{6.2}. Therefore we are done if we
can show that the right-hand side of \eqref{6.13} satisfies the same recurrence
relation. That is, with $h_n:={\mathcal H}_n+{\mathcal H}_{n+1}$ we need to
show
\begin{equation}\label{6.14}
\frac{(n+2)!(n+1)!^3}{(2n+3)!}h_{n+1}
=\frac{(n+1)^3(2n^2+4n+1)n!^4}{(2n+1)(2n+3)(2n+1)!}h_n
-\frac{(n+1)^3n!^4}{4(2n+1)^2(2n-1)!}h_{n-1}.
\end{equation}
Now, by definition of the harmonic numbers we have
\[
h_{n+1}-h_n = \frac{1}{n+1}+\frac{1}{n+2} = \frac{2n+3}{(n+1)(n+2)},
\]
and thus
\[
\frac{(n+1)(n+2)}{2n+3}\left(h_{n+1}-h_n\right)
=\frac{n(n+1)}{2n+1}\left(h_n-h_{n-1}\right),
\]
or equivalently,
\[
\frac{(n+1)(n+2)}{2n+3}h_{n+1}
=\frac{2(n+1)(2n^2+4n+1)}{(2n+1)(2n+3)}h_n - \frac{n(n+1)}{2n+1}h_{n-1}.
\]
Finally, upon multiplying both sides of this last identity by
$n!(n+1)!^3/(2n+2)!$, we see that it is equivalent to \eqref{6.14}.
This completes the proof.
\end{proof}

To conclude this section, we prove Proposition~\ref{prop:6.3}, beginning with
the following lemma.

\begin{lemma}\label{lem:6.6}
If $P_n(y)$, $n=0, 1, \ldots$, are the monic orthogonal polynomials with
respect to the sequence $\big(E_{2k+1}(1)/(2k+1)!\big)_{k\geq 0}$, then for $n\geq 1$,
\begin{equation}\label{6a.6}
s_n=\frac{1}{2(4n-1)(4n+3)},\quad
t_n=\frac{1}{16(4n-3)(4n-1)^2(4n+1)},
\end{equation}
with $s_n, t_n$ as in \eqref{2.4}.
\end{lemma}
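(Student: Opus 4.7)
The plan is to realize $c_k = E_{2k+1}(1)/(2k+1)!$ as the even subsequence of $a_k := E_{k+1}(1)/(k+1)!$, whose orthogonal polynomials were already analyzed in the proof of Proposition~\ref{prop:3.6}, and then apply the classical splitting for symmetric orthogonal polynomial sequences.

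First I would observe that by \eqref{3.4} together with $B_{2j+1}=0$ for $j\geq 1$, we have $E_{2k+2}(1)=0$ for all $k\geq 0$, which combined with $E_2(1)=0$ gives $a_{2k+1}=0$ for all $k\geq 0$. Thus $a_k$ is a symmetric sequence and $c_k = a_{2k}$. From the proof of Proposition~\ref{prop:3.6} we already know that the monic orthogonal polynomials belonging to $(a_k)$ satisfy \eqref{2.4} with $\hat s_n = 0$ and $\hat t_n = -\tfrac{1}{4(2n-1)(2n+1)}$.

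Next I would invoke the classical quadratic transformation for a symmetric OPS (see, e.g., \cite[Sect.~I.9]{Chi}): when $\hat s_n \equiv 0$, the polynomials $p_n$ split as $p_{2n}(y) = R_n(y^2)$, and the $R_n$ are precisely the monic orthogonal polynomials for the even-indexed sequence $(a_{2k}) = (c_k)$. Matching coefficients in the three-term recurrences (a short induction comparing the first few $p_{2n}$ written as $R_n(y^2)$) yields, for $n\geq 1$,
\[
s_n = -(\hat t_{2n}+\hat t_{2n+1}),\qquad t_n = \hat t_{2n-1}\hat t_{2n}.
\]
Substituting the explicit values of $\hat t_n$ and simplifying, one obtains $t_n = \tfrac{1}{16(4n-3)(4n-1)^2(4n+1)}$ directly, and for $s_n$,
\[
s_n = \frac{1}{4(4n-1)(4n+1)} + \frac{1}{4(4n+1)(4n+3)} = \frac{8n+2}{4(4n+1)(4n-1)(4n+3)} = \frac{1}{2(4n-1)(4n+3)}.
\]

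The only non-routine step is the splitting formula for the recurrence coefficients, which is classical and follows from matching polynomials of degree $2n$ on both sides. If one prefers to stay strictly within the framework of the present paper, an alternative is to apply Lemma~\ref{lem:4.3} directly to the checkerboard matrix $H_{2m+1}(a_k)$ and $H_{2m}(a_k)$ (whose explicit values are given by \eqref{3.25}) to extract closed-form expressions for $H_n(c_k)$ and $H_n(c_{k+1})$ by telescoping, and then read off $t_n$ from Corollary~\ref{cor:2.3} and $s_n$ from Lemma~\ref{lem:4.2}. This replaces the invocation of the symmetric splitting by a longer but entirely elementary computation that parallels the arguments already used in Section~\ref{sec3}.
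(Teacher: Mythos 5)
Your proof is correct, and it takes a genuinely different route from the paper's. The paper's own proof imports two explicit evaluations from \cite{Ha16} --- (H13) for $H_n(c_k)$ and (H22) for the shifted sequence $H_n(c_{k+1})$, stated together as \eqref{6a.7} --- reads off $t_n$ via Corollary~\ref{cor:2.3}, computes $d_n=H_n(c_{k+1})/H_n(c_k)$ as in Lemma~\ref{lem:4.1}, and then solves the recurrence \eqref{4.1a} for $s_n$. You instead recycle data already established in the proof of Proposition~\ref{prop:3.6}: the mother sequence $a_k=E_{k+1}(1)/(k+1)!$ is symmetric ($a_{2k+1}=0$) with $\hat s_n=0$ and $\hat t_n=-\tfrac{1}{4(2n-1)(2n+1)}$, and $c_k=a_{2k}$, so the classical quadratic contraction for symmetric orthogonal polynomial systems gives $s_n=-(\hat t_{2n}+\hat t_{2n+1})$ and $t_n=\hat t_{2n-1}\hat t_{2n}$; your arithmetic from there is right, and both formulas check against $n=1$. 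What your route buys: it needs only one external determinant evaluation (Han's (H12), already present as \eqref{3.25}) rather than two, and it explains conceptually why the coefficients have this shape. What it costs: the contraction formulas are nowhere in the paper's toolkit, so they must be cited precisely or proved; the honest derivation splits both parities, $p_{2n}(y)=R_n(y^2)$ and $p_{2n+1}(y)=yS_n(y^2)$, and eliminates $S_n$ from the two interlaced recurrences (this is in \cite{Chi}, Ch.~I, in the treatment of symmetric moment functionals), rather than ``comparing the first few cases'' as you suggest --- that phrase is the one soft spot in your write-up. Your fallback argument --- checkerboard factorization (Lemma~\ref{lem:4.3}) of $H_m(a_k)$ to extract $H_n(c_k)$ and $H_n(c_{k+1})$ from \eqref{3.25}, then Corollary~\ref{cor:2.3} for $t_n$ and Lemma~\ref{lem:4.2} for $s_n$ --- is also sound, and is essentially the paper's computation with its two inputs derived rather than quoted.
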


\begin{proof}
The proof follows the same outline as that of Lemma~\ref{lem:6.4}. Let
$c_k:=E_{2k+1}(1)/(2k+1)!$; then by the identities (H13) and (H22),
respectively, in \cite{Ha16} (see also \cite[Sect.~7.1]{DJ2}) we have
\begin{align}
H_n(c_k) &= \left(\frac{1}{2}\right)^{n+1}\prod_{\ell=1}^n
\left(\frac{1}{16(4\ell-3)(4\ell-1)^2(4\ell+1)}\right)^{n+1-\ell},\label{6a.7}\\
H_n(c_{k+1}) &= \left(\frac{-1}{24}\right)^{n+1}\prod_{\ell=1}^n
\left(\frac{1}{16(4\ell-1)(4\ell+1)^2(4\ell+3)}\right)^{n+1-\ell},\nonumber
\end{align}
and thus
\begin{align*}
d_n&:=\frac{H_n(c_{k+1})}{H_n(c_k)}
=\left(\frac{-1}{12}\right)^{n+1}\prod_{\ell=1}^n
\left(\frac{(4\ell-3)(4\ell-1)}{(4\ell+1)(4\ell+3)}\right)^{n+1-\ell}\\
&=\left(\frac{-1}{12}\right)^{n+1}3^n
\prod_{\ell=1}^{n}\frac{1}{(4\ell+1)(4\ell+3)},
\end{align*}
where the second line is a result of cancellations in the product in the first
line. Upon further simplification, we get
\[
d_n = (-1)^{n+1}\frac{(2n+2)!}{(4n+4)!},
\]
and therefore
\begin{equation}\label{6a.8}
\frac{d_n}{d_{n-1}} = -\frac{(2n+2)!}{(4n+4)!}\cdot\frac{(4n)!}{(2n)!}
= \frac{-1}{4(4n+1)(4n+3)}.
\end{equation}
Next, from \eqref{6a.7} we get the second identity in \eqref{6a.6}, and with 
this and \eqref{4.1a} we obtain for $n\geq 1$,
\[
s_n=-\frac{d_n}{d_{n-1}}-t_n\frac{d_{n-2}}{d_{n-1}}
= \frac{1}{2(4n-1)(4n+3)},
\]
where we have used \eqref{6a.8} and then simplified. This is the first 
identity in \eqref{6a.6}, while the second one follows from \eqref{6a.7}, by
\eqref{2.6}. 
\end{proof}
 
Although this will not be needed here, we note that the identity for $s_n$ in 
\eqref{6a.6} does not hold for $n=0$. In fact, we can easily compute (or check
the Table~\ref{tab:1}) that
$c_0=1/2$ and $c_1=-1/24$. Then, by \eqref{2.3},
\[
P_1(y)=\frac{1}{1/2}\det\begin{pmatrix}
\frac{1}{2} & -\frac{1}{24} \\
1 & y \\
\end{pmatrix}
= y + \frac{1}{12},
\]
so that $s_0=1/12$.

\begin{proof}[Proof of Proposition~\ref{prop:6.3}]
Once again we proceed as in the proofs of most of the previous propositions.
With $c_k$ as in the proof of Lemma~\ref{lem:6.6}, from \eqref{6a.7} we get
\[
\frac{H_n(c_k)}{H_{n-1}(c_k)} 
= \frac{1}{2}\prod_{\ell=1}^n\frac{1}{16(4\ell-3)(4\ell-1)^2(4\ell+1)}
= \frac{(2n)!^2}{2(4n+1)(4n)!^2}.
\]
In order to prove \eqref{6b.1}, we therefore need to show that
\begin{equation}\label{6a.9}
r_n:=\frac{H_n(b_k)}{H_{n-1}(c_k)}
= \frac{H_n(b_k)}{H_{n}(c_k)}\cdot\frac{H_n(c_k)}{H_{n-1}(c_k)}
= (-1)^n\frac{(2n+1)!}{2(4n-1)!}.
\end{equation}
Direct computation shows that this holds for $n=1$ and $n=2$, and again by
Lemma~\ref{2.5} we are done if we can show that the right-most term in 
\eqref{6a.9} satisfies the recurrence relation $r_{n+1}=-s_nr_n-t_nr_{n-1}$,
with $s_n$, $t_n$ as in \eqref{6a.6}. It is easy to verify that this is indeed
the case, which completes the proof.
\end{proof}

\section{Hankel determinants of Euler polynomials}\label{sec7}

So far in this paper we have only dealt with Hankel determinants of sequences
of numbers. However, Hankel determinants of {\it polynomial} sequences have
also been studied, going as far back as Al-Salam and Carlitz \cite{AC}. In this
connection the following general result must also be mentioned.

\begin{lemma}\label{lem:7.1}
Let $(c_0, c_1,\ldots)$ be a sequence and $x$ a number or a variable. If
\[
c_k(x) = \sum_{j=0}^k\binom{k}{j}c_jx^{k-j},
\]
then for all $n\geq 0$ we have
\[
H_n(c_k(x)) = H_n(c_k).
\]
\end{lemma}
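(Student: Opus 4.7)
The plan is to exhibit a matrix factorization $C(x) = L\,C\,L^{T}$, where $C(x) := (c_{i+j}(x))$ and $C := (c_{i+j})$ are the two Hankel matrices and $L$ is a lower-triangular matrix with $1$'s on the diagonal; this immediately yields $H_n(c_k(x)) = \det L \cdot H_n(c_k) \cdot \det L^{T} = H_n(c_k)$.

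The first step is to identify the right $L$. Vandermonde's convolution gives
\[
\binom{i+j}{m} = \sum_{k=0}^{m}\binom{i}{k}\binom{j}{m-k},
\]
so substituting into the defining sum for $c_{i+j}(x)$ and setting $\ell = m-k$ yields
\[
c_{i+j}(x) = \sum_{m=0}^{i+j}\binom{i+j}{m}c_m x^{i+j-m}
= \sum_{k,\ell \ge 0}\binom{i}{k}\binom{j}{\ell}\,c_{k+\ell}\,x^{i-k}x^{j-\ell},
\]
where the binomials enforce $k\le i$ and $\ell\le j$. Defining $L = (L_{i,k})$ by $L_{i,k} := \binom{i}{k}x^{i-k}$ (with the convention $\binom{i}{k}=0$ for $k>i$), the right-hand side is exactly $(L\,C\,L^{T})_{i,j}$, so on the finite $(n+1)\times(n+1)$ leading principal blocks we have $C(x) = L\,C\,L^{T}$.

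The second step is to take determinants. Since $L$ is lower triangular with diagonal entries $L_{i,i}=\binom{i}{i}x^{0}=1$, we have $\det L = 1$, and hence
\[
H_n(c_k(x)) = \det C(x) = (\det L)^{2}\,\det C = H_n(c_k),
\]
as claimed. There is no substantial obstacle here: the only nontrivial ingredient is the Vandermonde identity, and the rest is an essentially formal matrix computation. The one minor point worth stressing in the writeup is that $L$ has $1$'s on the diagonal regardless of $x$, which is exactly what allows the two Hankel determinants to agree as polynomials in $x$.
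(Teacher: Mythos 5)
Your proof is correct. Note that the paper itself offers no proof of this lemma; it simply cites Junod \cite{Ju}, so there is nothing internal to compare against. Your argument is the standard self-contained one: the congruence factorization $C(x)=L\,C\,L^{T}$ with the $x$-scaled Pascal matrix $L_{i,k}=\binom{i}{k}x^{i-k}$, justified entrywise by Vandermonde's convolution, and the observation that $L$ is unitriangular so that $\det L=1$. The details check out, including the point that matters for truncation: since $L_{i,k}=0$ for $k>i$, the identity holds at the level of the $(n+1)\times(n+1)$ leading principal blocks, so no infinite matrices are needed. This makes the paper's appeal to an external reference unnecessary, which is arguably a small improvement in self-containedness.
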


This can be found, with proof, in \cite{Ju}; it is also mentioned and used in 
various other publications, for instance in \cite[Lemma~15]{Kr1}.
As an immediate consequence of Lemma~\ref{lem:7.1}, together with \eqref{1.3a},
we get the well-known identities
\[
H_n\big(B_k(x)\big)=H_n\big(B_k\big),\qquad
H_n\big(E_k(x)\big)=2^{-n(n+1)}H_n\big(E_k\big)
\]
(see also \cite{AC}), where we used Lemma~\ref{lem:4.4} for the second identity.
In contrast to these identities, in \cite{DJ1} we obtained a number of Hankel
determinant identities for related polynomial sequences, where the determinants
turned out to be functions of $x$, rather than constants. 

In this section we are going to derive a similar identity, but in keeping with
the topic of this paper, it will be for the shifted analogue of a known
evaluation.

\begin{proposition}\label{prop:7.2}
If the polynomial sequence $(b_0(x), b_1(x), \ldots)$ is defined by
\begin{equation}\label{7.0}
b_k(x):=\begin{cases}
0,& k=0,\\
E_{2k-2}\big(\tfrac{x+1}{2}\big),& k\geq 1,
\end{cases}
\end{equation}
then for all $n\geq 1$ we have
\begin{equation}\label{7.1}
H_n\big(b_k(x)\big) = (-1)^{n-1}\frac{4}{n!^2(x^2-1)}\cdot{\mathcal K}_n(x)
\cdot H_n\big(E_{2k}(\tfrac{x+1}{2})\big),
\end{equation}
where ${\mathcal K}_1(x)=1$ and for $n\geq 2$,
\begin{equation}\label{7.2}
{\mathcal K}_n(x) := \sum_{j=1}^{n-1}
\frac{2^2}{3^2-x^2}\cdot\frac{4^2}{5^2-x^2}\cdots\frac{(2j)^2}{(2j+1)^2-x^2}.
\end{equation}
\end{proposition}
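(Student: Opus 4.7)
The plan is to mirror the proofs in Section~\ref{sec3}, especially Proposition~\ref{prop:3.5}, and to apply Lemma~\ref{lem:2.5} with $\alpha = 0$ and $c_k(x) := E_{2k}\bigl(\tfrac{x+1}{2}\bigr)$, so that $b_k(x) = c_{k-1}(x)$ for $k \geq 1$. Setting $r_n(x) := H_n\bigl(b_k(x)\bigr)/H_{n-1}\bigl(c_k(x)\bigr)$, Lemma~\ref{lem:2.5} supplies the recurrence
\[
r_{n+1}(x) = -s_n(x)\,r_n(x) - t_n(x)\,r_{n-1}(x)\qquad (n \geq 2),
\]
where $s_n(x), t_n(x)$ are the three-term coefficients in~\eqref{2.4} for the monic orthogonal polynomials of $\bigl(c_k(x)\bigr)_{k\geq 0}$. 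These arise from the $\nu = 0$ specialization of Theorem~5.1 of \cite{DJ1} (used with $\nu = 1$ in Proposition~\ref{prop:3.5} and with $\nu = 2$ in Proposition~\ref{prop:3.4}):
\[
s_n(x) = n(2n+1) - \frac{x^2-1}{4},\qquad t_n(x) = \frac{n^2\bigl((2n-1)^2-x^2\bigr)}{4}.
\]

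By Corollary~\ref{cor:2.3} we have $H_n(c_k)/H_{n-1}(c_k) = \prod_{\ell=1}^n t_\ell(x)$, so identity~\eqref{7.1} is equivalent to an explicit closed form for $r_n(x)$. Substituting this ansatz into the recurrence above and cancelling the common factorial and product factors, one is reduced to showing
\[
\bigl((2n+1)^2-x^2\bigr)\,{\mathcal K}_{n+1}(x) = \bigl(8n^2+4n+1-x^2\bigr)\,{\mathcal K}_n(x) - 4n^2\,{\mathcal K}_{n-1}(x).
\]

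The key observation is that this recurrence for ${\mathcal K}_n(x)$ is nothing but the telescoping structure built into its definition~\eqref{7.2}. Indeed, ${\mathcal K}_{n+1}(x) - {\mathcal K}_n(x) = \prod_{\ell=1}^n (2\ell)^2/((2\ell+1)^2-x^2)$, and comparing two consecutive differences yields
\[
\bigl((2n+1)^2-x^2\bigr)\bigl({\mathcal K}_{n+1}-{\mathcal K}_n\bigr) = (2n)^2\bigl({\mathcal K}_n - {\mathcal K}_{n-1}\bigr),
\]
which rearranges to the required relation via $(2n+1)^2 + (2n)^2 = 8n^2+4n+1$. The initial cases $n = 1, 2$ will follow by direct computation; the factor $1/(x^2-1)$ in~\eqref{7.1} absorbs precisely the ``missing'' $\ell = 0$ term $1-x^2$ of the telescoping product implicit in~\eqref{7.2}, so that $r_1(x) = -1$ correctly reproduces $H_1(b_k)/H_0(c_k) = -c_0(x)^2/c_0(x) = -1$.

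The main obstacle is clerical rather than conceptual: accurately extracting $s_n(x)$ and $t_n(x)$ for $\nu = 0$ from \cite[Thm.~5.1]{DJ1}, and then carrying out the bookkeeping that matches the ansatz for $r_n(x)$ against the telescoping recurrence for ${\mathcal K}_n(x)$, including the subtle interaction between the $(x^2-1)^{-1}$ prefactor and the $\ell=1$ term of the product. Once these are aligned, the argument is entirely mechanical and parallel to those of Sections~\ref{sec3} and~\ref{sec6}.
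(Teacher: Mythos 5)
Your proof is correct and is essentially the paper's own argument: the paper likewise applies Lemma~\ref{lem:2.5} with $c_k(x)=E_{2k}\bigl(\tfrac{x+1}{2}\bigr)$ and the same coefficients $s_n(x)$, $t_n(x)$ (cited there from \cite[Eq.~(5.5)]{DJ1}), reduces \eqref{7.1} to a three-term recurrence for $r_n(x)=H_n\bigl(b_k(x)\bigr)/H_{n-1}\bigl(c_k(x)\bigr)$ with the cases $n=1,2$ checked directly, and verifies that recurrence by exactly your two-consecutive-differences telescoping --- the only cosmetic difference being that the paper packages the closed form as an auxiliary polynomial $p_{n-1}(x)$ satisfying a first-order recurrence (Lemma~\ref{lem:7.4}, equations \eqref{7.13}--\eqref{7.15}) and converts to ${\mathcal K}_n(x)$ afterwards, whereas you work with ${\mathcal K}_n(x)$ directly. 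One remark: your telescoping step tacitly reads \eqref{7.2} as including the empty-product (constant $1$) term, so that ${\mathcal K}_2(x)=1+\tfrac{4}{9-x^2}$, and this is indeed the intended meaning --- it is forced by ${\mathcal K}_1(x)=1$ and confirmed by the paper's proof of Proposition~\ref{prop:7.3}, where ${\mathcal K}_n(0)$ carries the leading $1$ --- so your argument proves the statement in its correct form.
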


As a consequence of Proposition~\ref{prop:7.2} we get the following result.

\begin{proposition}\label{prop:7.3}
If the sequence $(b_0, b_1, \ldots)$ is defined by
\begin{equation}\label{7.2a}
b_k:=\begin{cases}
0,& k=0,\\
E_{2k-2},& k\geq 1,
\end{cases}
\end{equation}
then for all $n\geq 1$ we have
\begin{equation}\label{7.3}
H_n\big(b_k\big) = \frac{(-1)^n}{4^n n!^2}\cdot{\mathcal K}_n
\cdot H_n\big(E_{2k}\big),
\end{equation}
where 
\begin{equation}\label{7.4}
{\mathcal K}_n := \sum_{j=0}^{n-1}\frac{16^j}{(2j+1)^2\binom{2j}{j}^2}.
\end{equation}
\end{proposition}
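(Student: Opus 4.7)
The natural strategy is to deduce Proposition~\ref{prop:7.3} from Proposition~\ref{prop:7.2} by specializing at $x=0$ and then rescaling via Lemma~\ref{lem:4.4}. Setting $x=0$ in \eqref{7.0} yields $b_k(0) = E_{2k-2}(\tfrac{1}{2})$ for $k\geq 1$, and by the normalization $E_n = 2^n E_n(\tfrac{1}{2})$ from \eqref{1.3b} this equals $E_{2k-2}/4^{k-1}$. Thus I would write $b_k(0) = 4\cdot(1/4)^k\cdot b_k$ for all $k\geq 0$ (the identity holds trivially at $k=0$ since both sides vanish). By Lemma~\ref{lem:4.4} applied in both of its forms, this gives $H_n(b_k(0)) = 4^{n+1}\cdot(1/4)^{n(n+1)}\cdot H_n(b_k) = 4^{1-n^2}H_n(b_k)$. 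Similarly, $E_{2k}(\tfrac{1}{2}) = (1/4)^k E_{2k}$ gives $H_n\bigl(E_{2k}(\tfrac{1}{2})\bigr) = 4^{-n(n+1)}H_n(E_{2k})$.

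Substituting both rescalings into \eqref{7.1} at $x=0$ (where the prefactor $4/(x^2-1)$ becomes $-4$, absorbing one sign into $(-1)^{n-1}$) produces
\begin{equation*}
4^{1-n^2}H_n(b_k) = \frac{(-1)^n\cdot 4^{\,1-n(n+1)}}{n!^2}\,\mathcal{K}_n(0)\,H_n(E_{2k}),
\end{equation*}
and the powers of $4$ collapse to give exactly
\begin{equation*}
H_n(b_k) = \frac{(-1)^n}{4^n n!^2}\,\mathcal{K}_n(0)\,H_n(E_{2k}).
\end{equation*}

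It then remains only to identify $\mathcal{K}_n(0)$ with the series $\mathcal{K}_n$ of \eqref{7.4}. Setting $x=0$ in \eqref{7.2}, the $j$-th summand becomes the telescoping product
\begin{equation*}
\prod_{i=1}^{j}\frac{(2i)^2}{(2i+1)^2} = \frac{(2^j j!)^4}{(2j+1)!^2} = \frac{16^j}{(2j+1)^2\binom{2j}{j}^2},
\end{equation*}
which matches the $j$-th term of \eqref{7.4}; the $j=0$ contribution $1$ corresponds to the empty product, consistent with the boundary case $\mathcal{K}_1(x)=1$. Hence $\mathcal{K}_n(0)=\mathcal{K}_n$ and the proof is complete.

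The only real hurdle here is the careful bookkeeping of the various powers of $4$ generated by two applications of Lemma~\ref{lem:4.4}; the verification of the factorial identity, and a sanity check at $n=1$ (where both sides equal $-1$) to confirm the sign and normalization, are routine. No new orthogonal-polynomial machinery or Lemma~\ref{lem:2.5} invocation is needed, since all the analytic content is already packaged inside Proposition~\ref{prop:7.2}.
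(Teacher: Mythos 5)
Your proof is correct and follows essentially the same route as the paper's own argument: specialize Proposition~\ref{prop:7.2} at $x=0$, use $E_n=2^nE_n(\tfrac{1}{2})$ with both parts of Lemma~\ref{lem:4.4} to get $H_n(b_k(0))=4^{1-n^2}H_n(b_k)$ and $H_n\big(E_{2k}(\tfrac{1}{2})\big)=4^{-n(n+1)}H_n(E_{2k})$, and then identify ${\mathcal K}_n(0)$ with the sum \eqref{7.4}. Your closing-form evaluation $\prod_{i=1}^{j}(2i)^2/(2i+1)^2=16^j\,j!^4/(2j+1)!^2$ and your reading of the $j=0$ term as the empty product match the paper's computation exactly.
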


The Hankel determinants on the right of \eqref{7.1} and \eqref{7.3} are given 
explicitly below in \eqref{7.10} and \eqref{7.10a}, respectively.
The proofs of both results are based on the following lemma.

\begin{lemma}\label{lem:7.4}
With the sequence $(b_0(x), b_1(x), \ldots)$ as in \eqref{7.0}, we have
for all $n\geq 1$,
\begin{equation}\label{7.5}
H_n\big(b_k(x)\big) 
= (-1)^{n-1}\frac{4p_{n-1}(x)}{n!^2\prod_{\ell=1}^n((x^2-(2\ell-1)^2)}
\cdot H_n\big(E_{2k}(\tfrac{x+1}{2})\big),
\end{equation}
where the polynomial sequence $p_n(x)$ satisfies the recurrence relation
$p_0(x)=1$ and
\begin{equation}\label{7.6}
p_n(x)=\left(x^2-(2n+1)^2\right)p_{n-1}(x) + (-4)^n{n!}^2.
\end{equation}
\end{lemma}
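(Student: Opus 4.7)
The plan is to follow the pattern established in the proofs of Propositions~\ref{prop:3.1}--\ref{prop:3.5} and~\ref{prop:6.1}--\ref{prop:6.3}. Set $c_k(x):=E_{2k}(\tfrac{x+1}{2})$, so that the sequence $(b_k(x))$ in \eqref{7.0} is exactly the right-shift of $(c_k(x))$ with $b_0(x)=0$. Applying Lemma~\ref{lem:2.5} with $\alpha=0$ produces the three-term recurrence
\[
\frac{H_{n+1}(b_k)}{H_n(c_k)}
=-s_n(x)\,\frac{H_n(b_k)}{H_{n-1}(c_k)}
-t_n(x)\,\frac{H_{n-1}(b_k)}{H_{n-2}(c_k)},
\]
where $\bigl(s_n(x),t_n(x)\bigr)$ are the coefficients \eqref{2.4} for the monic orthogonal polynomials associated with $(c_k(x))$.

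First I would invoke Theorem~5.1 of \cite{DJ1} (with parameter $\nu=0$, mirroring the way $\nu=1$ and $\nu=2$ are used in the proofs of Propositions~\ref{prop:3.3} and~\ref{prop:3.4}) to extract
\[
s_n(x)=n(2n+1)-\frac{x^2-1}{4},\qquad
t_n(x)=-\frac{n^2}{4}\bigl(x^2-(2n-1)^2\bigr).
\]
The crucial structural observation is that $t_n(x)$ contains precisely the factor $x^2-(2n-1)^2$ appearing in the denominator of \eqref{7.5}. Combining this with $H_n(c_k)/H_{n-1}(c_k)=\prod_{\ell=1}^n t_\ell(x)$ from Corollary~\ref{cor:2.3}, the claim \eqref{7.5} is seen to be equivalent to the much cleaner statement
\[
r_n(x):=\frac{H_n(b_k(x))}{H_{n-1}(c_k(x))}=-\frac{p_{n-1}(x)}{4^{n-1}}.
\]

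With this reformulation in hand, I would prove \eqref{7.5} by induction on $n$. After verifying the base cases $n=1,2$ by direct computation (for example $r_1(x)=-c_0(x)=-1=-p_0(x)$), the inductive step demands that the Lemma~\ref{lem:2.5} recurrence $r_{n+1}=-s_n r_n-t_n r_{n-1}$, once the clean form of $r_n$ is inserted and both sides are multiplied by $-4^n$, reduce to
\[
p_n(x)=-4s_n(x)p_{n-1}(x)-16t_n(x)p_{n-2}(x)
=(x^2-8n^2-4n-1)\,p_{n-1}(x)+4n^2\bigl(x^2-(2n-1)^2\bigr)\,p_{n-2}(x).
\]
This 3-term homogeneous recurrence in turn follows from the defining 2-term inhomogeneous recurrence \eqref{7.6} by applying \eqref{7.6} at index $n-1$ to express $(-4)^{n-1}(n-1)!^2=p_{n-1}(x)-(x^2-(2n-1)^2)p_{n-2}(x)$, substituting into $(-4)^n n!^2=-4n^2\cdot(-4)^{n-1}(n-1)!^2$, and collecting the coefficients of $p_{n-1}$ and $p_{n-2}$.

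The main obstacle is really bookkeeping: the conversion between the Hankel-ratio recurrence delivered by Lemma~\ref{lem:2.5} and the 2-term inhomogeneous recurrence \eqref{7.6} passes through an intermediate 3-term homogeneous recurrence, and the signs together with the factors of $4^n$ and $n!^2$ must be tracked carefully. Fortunately, the special factored form of $t_n(x)$ causes the product $\prod_{\ell=1}^n(x^2-(2\ell-1)^2)$ to cancel cleanly, leaving a routine polynomial identity.
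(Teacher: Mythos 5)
Your proposal is correct and follows essentially the same route as the paper: apply Lemma~\ref{lem:2.5} with $\alpha=0$ to the sequence $c_k(x)=E_{2k}(\tfrac{x+1}{2})$, use the known coefficients $s_n(x)$, $t_n(x)$ from \cite{DJ1}, reduce \eqref{7.5} to the clean statement $r_n(x)=-4^{1-n}p_{n-1}(x)$, check $n=1,2$ directly, and show the three-term homogeneous recurrence $p_n=(x^2-8n^2-4n-1)p_{n-1}+4n^2(x^2-(2n-1)^2)p_{n-2}$ follows from \eqref{7.6} exactly as in the paper's equations \eqref{7.13}--\eqref{7.15}. The only (harmless) cosmetic difference is that you obtain the ratio $H_n(c_k)/H_{n-1}(c_k)$ from Corollary~\ref{cor:2.3} as $\prod_{\ell=1}^n t_\ell(x)$, whereas the paper reads it off the explicit evaluation \eqref{7.10} quoted from \cite{DJ1}.
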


The first few terms of this sequence, after $p_0(x)=1$, are
\begin{align*}
p_1(x) &= x^2-13,\\
p_2(x) &= x^4-38x^2+389,\\
p_3(x) &= x^6-87x^4+2251x^2-21365.
\end{align*}
Before proving Lemma~\ref{lem:7.4}, we derive from it 
Propositions~\ref{prop:7.2} and~\ref{prop:7.3}.

\begin{proof}[Proof of Propositions~\ref{prop:7.2}]
We define the rational functions ${\mathcal K}_n(x)$ by 
${\mathcal K}_0(x)=1$, and for $n\geq 1$ implicitly by
\begin{equation}\label{7.7}
p_n(x)=\left(x^2-3^2\right)\left(x^2-5^2\right)\cdots
\left(x^2-(2n+1)^2\right){\mathcal K}_n(x).
\end{equation}
Then we can rewrite \eqref{7.6} as
\[
{\mathcal K}_n(x) = {\mathcal K}_{n-1}(x)+\frac{\left(2^nn!\right)^2}
{\left(3^2-x^2\right)\left(5^2-x^2\right)\cdots\left((2n+1)^2-x^2\right)}.
\]
Iterating, with ${\mathcal K}_0(x)=1$, we get \eqref{7.2}; then 
\eqref{7.5} with \eqref{7.7} gives \eqref{7.1}.
\end{proof}

\begin{proof}[Proof of Propositions~\ref{prop:7.3}]
This result follows from Propositions~\ref{prop:7.2} with $x=0$. Since by
\eqref{1.5} we have $E_{2k-2}(1/2)=2^{2-2k}E_{2k-2}$, then \eqref{7.0},
\eqref{7.2a}, and Lemma~\ref{lem:4.4} give
\begin{equation}\label{7.8}
H_n\big(b_k(0)\big)=H_n\big((\tfrac{1}{2})^{2k-2}b_k\big)
= 4^{n+1}4^{-n(n+1)}H_n\big(b_k\big)=4^{1-n^2}H_n\big(b_k\big).
\end{equation}
Similarly, we find
\[
H_n\big(E_{2k}(\tfrac{1}{2})\big)=H_n\big((\tfrac{1}{4})^kE_{2k}\big)
=4^{-n-n^2}H_n\big(E_{2k}\big),
\]
and thus, with \eqref{7.8},
\begin{equation}\label{7.9}
\frac{H_n\big(b_k\big)}{H_n\big(E_{2k}\big)} = 4^{-1-n}
\frac{H_n\big(b_k(0)\big)}{H_n\big(E_{2k}(\tfrac{1}{2})\big)}.
\end{equation}
On the other hand, by \eqref{7.2} we have
\[
{\mathcal K}_n={\mathcal K}_n(0) = 1+
\sum_{j=1}^{n-1}\left(\frac{2\cdot 4\cdots(2j)}{3\cdot 5\cdots(2j+1)}\right)^2
=1+\sum_{j=1}^{n-1}\frac{16^j\cdot j!^4}{(2j+1)!^2},
\]
where the right equality is easy to verify. Combining this and \eqref{7.9} with
\eqref{7.1}, we get the desired identity \eqref{7.3}.
\end{proof}

\begin{proof}[Proof of Lemma~\ref{lem:7.4}]
We set $c_k(x):=E_{2k}(\frac{x+1}{2})$ and use the identity \eqref{5.7} in
\cite{DJ1}, namely
\begin{equation}\label{7.10}
H_n(c_k(x)) = (-1)^{\binom{n+1}{2}}\prod_{\ell=1}^n
\left(\frac{\ell^2}{4}\big(x^2-(2\ell-1)^2\big)\right)^{n+1-\ell}.
\end{equation}
Although this is not needed here, we mention that \eqref{7.10} with $x=0$, and
then using \eqref{1.3b} and \eqref{4.8}, yields
\begin{equation}\label{7.10a}
H_n(E_{2k}) = \prod_{\ell=1}^n
\left((2\ell-1)^2(2\ell)^2)\right)^{n+1-\ell};
\end{equation}
see also \cite[Eq.~(3.52)]{Kr1}.
In view of \eqref{7.5} we consider
\begin{align*}
r_n(x):&=\frac{H_n\big(b_k(x)\big)}{H_{n-1}\big(c_k(x)\big)}
=\frac{H_n\big(b_k(x)\big)}{H_n\big(c_k(x)\big)}\cdot
\frac{H_n\big(c_k(x)\big)}{H_{n-1}\big(c_k(x)\big)} \\
&=(-1)^{n-1}\frac{4p_{n-1}(x)}{n!^2\prod_{\ell=1}^n((x^2-(2\ell-1)^2)}\cdot
(-1)^{n-1}\prod_{\ell=1}^n
\left(\frac{\ell^2}{4}\big(x^2-(2\ell-1)^2\big)\right),
\end{align*}
and thus
\begin{equation}\label{7.11}
r_n(x) = -4^{1-n}p_{n-1}(x).
\end{equation}
We can verify by direct computation that \eqref{7.11} holds for $n=1$ and $n=2$.
By Lemma~\ref{lem:2.5} we are then done if we can show that the polynomials
$r_n(x)$ satisfy the recurrence relation
\begin{equation}\label{7.12}
r_{n+1}(x) = -s_n(x)r_n(x)  - t_n(x)r_{n-1}(x),
\end{equation}
where according to \cite[Eq.~(5.5)]{DJ1} we have
\[
s_n(x)=2n^2+n+\frac{1}{4}\left(1-x^2\right),\qquad
t_n(x)= \frac{1}{4}n^2\left((2n-1)^2-x^2\right).
\]
Substituting these terms and \eqref{7.11} into \eqref{7.12}, and then
multiplying both sides by $-4^n$, we get
\begin{equation}\label{7.13}
p_n(x)=\big(x^2-(8n^2+4n+1)\big)p_{n-1}(x)+4n^2\big(x^2-(2n-1)^2\big)p_{n-2}(x).
\end{equation}
On the other hand, by \eqref{7.6} we have
\begin{align}
p_n(x)-\left(x^2-(2n+1)^2\right)p_{n-1}(x) &= (-4)^n{n!}^2,\label{7.14}\\
p_{n-1}(x)-\left(x^2-(2n-1)^2\right)p_{n-2}(x) &= (-4)^{n-1}{(n-1)!}^2.\label{7.15}
\end{align}
Finally, multiplying both sides of \eqref{7.15} by $-4n^2$ and equating the
resulting equation with \eqref{7.14}, then upon simplification we get 
\eqref{7.13}. The proof is now complete.
\end{proof}


\begin{thebibliography}{25}

\bibitem{AC} W.~A.~Al-Salam and L.~Carlitz, Some determinants of Bernoulli, 
Euler and related numbers, {\it Portugal. Math.} {\bf 18} (1959), 91--99.

\bibitem{Chi} T.~S.~Chihara, {\it An Introduction to Orthogonal Polynomials\/},
Gordon and Breach, 1978.

\bibitem{CK} J.~Cigler and C.~Krattenthaler, Some determinants of path 
generating functions, {\it Adv. in Appl. Math.} {\bf 46} (2011), 
144--174.

\bibitem{DJ1} K.~Dilcher and L.~Jiu, Orthogonal polynomials and Hankel 
determinants for certain Bernoulli and Euler polynomials. 
{\it J. Math. Anal. Appl.} (2021), Article 124855.

\bibitem{DJ2} K.~Dilcher and L.~Jiu, Hankel determinants of sequences related
to Bernoulli and Euler polynomials. {\it Preprint,} 2020,
arXiv:2007.09821 [math.NT].

\bibitem{Ha16} G.-N.~Han, Jacobi continued fraction and Hankel determinants of 
the Thue-Morse sequence, {\it Quaest. Math.} {\bf 39} (2016), 895--909.

\bibitem{Is} M.~E.~H.~Ismail, {\it Classical and Quantum Orthogonal Polynomials
in One Variable. With Two Chapters by Walter Van Assche\/}. Encyclopedia of 
Mathematics and its Applications, 98. Cambridge University Press, Cambridge, 
2005.

\bibitem{JS} L.~Jiu and D.~Y.~Shi, Orthogonal polynomials and connection to generalized Motzkin numbers for higher-order Euler polynomials, {\it J. Number Theory} {\bf 199} (2019), 389--402. 

\bibitem{Ju} A.~Junod, Hankel determinants and orthogonal polynomials, {\it Expo. Math.} {\bf 21} (2003), 63--74.

\bibitem{Kr1} C.~Krattenthaler, Advanced determinant calculus,
{\it S\'eminaire Lotharingien Combin.} {\bf 42} (``The Andrews Festschrift")
(1999), Article B42q. 

\bibitem{Kr2} C.~Krattenthaler, Advanced determinant calculus: a complement. 
{\it Linear Algebra Appl.} {\bf 411} (2005), 68--166. 

%\bibitem{La} L.~J.~Lange, Continued fraction representations for functions related to the gamma function. In {\it Continued fractions and orthogonal functions (Loen, 1992)\/}, 233--279, Lecture Notes in Pure and Appl. Math., 154, Dekker, New York, 1994.

%\bibitem{LW} L.~Lorentzen and H.~Waadeland, {\it Continued fractions with applications\/}. Studies in Computational Mathematics, 3. North-Holland, Amsterdam, 1992.

\bibitem{Mi} S.~C.~Milne, Infinite families of exact sums of squares formulas, 
Jacobi elliptic functions, continued fractions, and Schur functions,
{\it  Ramanujan J.} {\bf 6} (2002), 7--149. 

\bibitem{MWY} L.~Mu, Y.~Wang, and Y.~Yeh, Hankel determinants of linear 
combinations of consecutive Catalan-like numbers, 
{\it Discrete Math.} {\bf 340} (2017), 3097--3103.

\bibitem{DLMF} F.~W.~J.~Olver et al. (eds.), {\it NIST Handbook of Mathematical
Functions\/}, Cambridge Univ. Press, New York, 2010. Online version:
{\tt http://dlmf.nist.gov}.

%\bibitem{Ri} P.~Ribenboim, {\it 13 lectures on Fermat's last theorem\/}.  Springer-Verlag, New York-Heidelberg, 1979.

\bibitem{To} J.~Touchard, Nombres exponentiels et nombres de Bernoulli,
{\it Canadian J. Math.} {\bf 8} (1956), 305--320.

%\bibitem{Wa} H.~S.~Wall, {\it Analytic Theory of Continued Fractions\/}. Van Nostrand, New York, 1948.

\end{thebibliography}
\end{document}